\documentclass[12pt]{amsart}

\title{Multiplicative functions in short intervals}

\author{Kaisa Matom\"aki}
\address{Department of Mathematics and Statistics, University of Turku,
20014 Turku, Finland}
\email{ksmato@utu.fi}

\author{Maksym Radziwi\l\l}
\address{Department of Mathematics, Rutgers University \\ Hill Center for the Mathematical Sciences \\
110 Frelinghuysen Rd., Piscataway, NJ 08854-8019 }
\email{maksym.radziwill@gmail.com}

\usepackage{amssymb, amsthm, enumerate}
\usepackage{amsmath}
\newtheorem{theorem}{Theorem}
\newtheorem{lemma}{Lemma}
\newtheorem{proposition}{Proposition}
\newtheorem{corollary}{Corollary}

\theoremstyle{remark}
\newtheorem*{remark}{Remark}
\usepackage{geometry}

\begin{document}
\begin{abstract}
We introduce a general result relating  ``short averages'' of a multiplicative function to ``long averages'' which are well understood. This result has several consequences. First, for the M\"obius function we show that there are cancellations in the sum of $\mu(n)$ in almost all intervals of the form $[x, x + \psi(x)]$ with $\psi(x) \rightarrow \infty$ arbitrarily slowly. This goes beyond what was 
previously known conditionally on the Density
Hypothesis or the stronger Riemann Hypothesis. Second, we settle the long-standing conjecture on 
the existence of $x^{\epsilon}$-smooth numbers in intervals of the form $[x, x + c(\varepsilon) \sqrt{x}]$, recovering unconditionally a conditional 
(on the Riemann Hypothesis) result of Soundararajan. Third, we show that the mean-value of 
$\lambda(n)\lambda(n+1)$, with $\lambda(n)$ Liouville's function, is non-trivially bounded in absolute value by $1 - \delta$ for some 
$\delta > 0$. This settles an old folklore conjecture and constitutes progress towards Chowla's conjecture. 
Fourth, we show that a (general) real-valued multiplicative function $f$ has a positive proportion of sign changes if and only
if $f$ is negative on at least one integer and non-zero on a positive proportion of the integers. This improves on many
previous works, and is new already in the case of the M\"obius function. 
We also obtain some additional results on smooth numbers in almost all intervals, and sign changes of 
multiplicative functions in all intervals of square-root length.
% and the parity of the number of prime factors of integers between several consecutive squares. 
% Our results are also applicable for example to the study of zeros of Hecke eigenforms on the vertical geodesic (this application will appear in a subsequent paper with Steve Lester). 
\end{abstract}

\dedicatory{Dedicated to Andrew Granville}

\maketitle
%{\tt Todo: Mention $L$-function analogies in Section 2 \\
%Correct proof of Corollaries}

%{\tt We can put the abstract only in the arxiv abstract section if you prefer}
%{\tt Todo: Clean up proof of Theorem 4 and Theorem 5 \\
%Adjust statements in the introduction \\
%Get worse constant for smooth $C = \rho(1/\varepsilon)^{25}$ \\
%Mention that with a smoothing can reduce $25$ to $13$ \\
%Now that the $\sqrt{x}$ variant of Theorem 1 is so clean it makes
%sense to add it to the introduction \\
%Maybe remove Theorem 1 and just state Theorem 3 ? This way in Section 2 we
%only state two Theorems (with the set S) and mention that they imply the
%main theorems.} 

\section{Introduction}
Let $f: \mathbb{N} \rightarrow [-1,1]$ be a multiplicative function.
We introduce a general result relating many ``short averages'' of a multiplicative function over a bounded length
interval to ``long averages'' which are well understood using tools from multiplicative number theory.
\begin{theorem} \label{thm:main} 
Let $f: \mathbb{N} \rightarrow [-1,1]$ be a multiplicative function. 
There exist absolute constants $C, C' > 1$ such that for any $2 \leq h \leq X$ and $\delta > 0$, 
\[
\Bigg | \frac{1}{h} \sum_{x \leq n \leq x + h} f(n) - \frac{1}{X} \sum_{X \leq n \leq 2X} f(n) \Bigg | \leq \delta + C'\frac{\log \log h}{\log h}
\]
for all but at most 
$$
C X \Big ( \frac{(\log h)^{1/3}}{\delta^2 h^{\delta/25}} + \frac{1}{\delta^2 (\log X)^{1/50}} \Big )
$$
integers $x \in [X, 2X]$. One can take $C' = 20000$.
\end{theorem}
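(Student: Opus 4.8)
I would argue by a second moment bound. It suffices to produce, for $x\in[X,2X]$, a splitting
\[
\frac1h\sum_{x<n\le x+h}f(n)-\frac1X\sum_{X<n\le 2X}f(n)=E(x)+R(x),
\]
where $|E(x)|\le C'\frac{\log\log h}{\log h}$ for all $x$ and
\[
\frac1X\int_X^{2X}|R(x)|^2\,dx\ll\frac{(\log h)^{1/3}}{h^{\delta/25}}+\frac{1}{(\log X)^{1/50}};
\]
indeed, Chebyshev's inequality then bounds $\#\{x\in[X,2X]:|R(x)|>\delta\}$ by $\delta^{-2}X$ times the right-hand side, which is the asserted estimate (with the implied constant playing the role of $C$). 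One may assume $h$ is not too close to $X$ (say $h\le X/(\log X)^{100}$), the complementary range being more elementary.

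The first step, a Parseval-type identity in the spirit of Saffari and Vaughan, converts the mean square in $x$ into a mean square on the line $\operatorname{Re}(s)=1$: with $F(1+it):=\sum_{X<n\le 2X}f(n)n^{-1-it}$ one obtains, up to negligible terms,
\[
\frac1X\int_X^{2X}\Big|\,\cdots\,\Big|^2\,dx\ll\frac hX\int_{1}^{X/h}|F(1+it)|^2\,dt,
\]
where subtracting the long average is exactly what removes the low frequencies $|t|\lesssim 1$. So the real task is to show that $\int_1^{X/h}|F(1+it)|^2\,dt=o(X/h)$ with an explicit saving; equivalently, that $F(1+it)$ is small for most $t$ in this long range.

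Next one brings in multiplicativity via Ramar\'e's identity: for an interval of primes $\mathcal P=[P_1,P_2]$, writing $1=\sum_{p\in\mathcal P,\ p\mid n}\omega_{\mathcal P}(n)^{-1}$ for every $n$ divisible by some prime in $\mathcal P$ gives
\[
F(1+it)=\sum_{p\in\mathcal P}\frac{f(p)}{p^{1+it}}\Big(\sum_m\frac{f(m)}{\omega_{\mathcal P}(pm)\,m^{1+it}}\Big)+O\Big(\!\!\!\sum_{\substack{X<n\le 2X\\ (n,\,\prod_{p\in\mathcal P}p)=1}}\!\!\!\tfrac1n\Big),
\]
the error being $\ll\log P_1/\log P_2$ by a sieve bound, hence negligible once $P_2$ is a large power of $P_1$. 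For the main term the key is \emph{not} to separate the prime from $m$ by Cauchy--Schwarz, but to split the $t$-range by the size of the prime polynomial $\sum_{p\in\mathcal P}f(p)p^{-1-it}$: where it is at most $V$ one gains a factor $V^2$ and applies the mean-value theorem to the leftover polynomial (of length $\asymp X/p$), while the set where it exceeds $V$ has tiny measure by a Hal\'asz--Montgomery large-values estimate — this is exactly the point at which prime-supported Dirichlet polynomials outperform general ones — so there a crude bound is enough. Taking $V$ to be a small power of $\log h$ (so that $V^2\asymp(\log h)^{1/3}$ emerges) and summing over $p\in\mathcal P$ yields a power saving in $P_1$ against a loss of order $\sum_{p\in\mathcal P}1/p\asymp\log(\log P_2/\log P_1)$.

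The real difficulty — and where I expect most of the work to lie — is that the range $1\le|t|\le X/h$ is far too long for a single mean-value estimate on a polynomial of length $X/p$; one factorisation does not suffice, and a naive iteration lets the per-step losses compound. The remedy is a carefully structured iteration: decompose $(1,2X]$ multiplicatively into boundedly many scales $[X^{\alpha_j},X^{\alpha_{j+1}}]$, at each scale extract a prime from a calibrated interval $\mathcal P_j$ (reaching down to $P_1$ about $h^{c\delta}$ at the finest scale the method allows), feed the output of one step into the next, and use the large-values input to keep the losses from accumulating. Optimising the scales and $V$ against $X/h$ leaves precisely the three error terms: the power saving $h^{-\delta/25}$ together with the harmless $(\log h)^{1/3}$ from the fine scales; the $(\log X)^{-1/50}$ from the top scales (lengths $\asymp X$, where the large-values estimate yields only a power of $\log X$); and the residual $O(\log\log h/\log h)$ — absorbed into $E(x)$ — from the prime range near $h$, which is at once too short and too high for the scheme, and where one can do no better than the trivial bound $\sum_{h^{1-o(1)}<p\le h}1/p\asymp\log\log h/\log h$.
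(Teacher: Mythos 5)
Your outline tracks the paper's argument at a high level: second moment plus Chebyshev, a Saffari--Vaughan type Parseval identity reducing the problem to bounding $\int_{T_0}^{X/h}|F(1+it)|^2\,dt$, Ramar\'e's identity to peel off a prime-supported Dirichlet polynomial, partitioning the $t$-range according to where that prime polynomial is small, and using a Hal\'asz-type large-values estimate tailored to prime-supported polynomials. These are indeed the main pillars, and you also correctly identify that the long integration range is the core difficulty.

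The genuine gap is in the mechanism of the iteration. You describe an iterated decomposition (``feed the output of one step into the next''), which, as you yourself anticipate, risks compounding the losses and is not in fact what the proof does. The paper performs only a \emph{single} Ramar\'e extraction per part of the partition. It partitions $[T_0,X/h]=\mathcal{T}_1\cup\dots\cup\mathcal{T}_J\cup\mathcal{U}$, where $\mathcal{T}_j$ is the set of $t$ for which $j$ is the \emph{smallest} index with the prime polynomial over $[P_j,Q_j]$ small. On $\mathcal{T}_j$, one extraction from $[P_j,Q_j]$ leaves a residual polynomial $R$ of length $\asymp X/P_j$, too short for the mean-value theorem. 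The fix is \emph{amplification}: by the minimality of $j$, some piece of the prime polynomial over $[P_{j-1},Q_{j-1}]$ is large, say $\ge V$, on $\mathcal{T}_j$; one multiplies $R$ by $\bigl(V^{-1}\sum_{p}f(p)p^{-1-it}\bigr)^{\ell}$ for a calibrated $\ell$ so that the product has length $\approx X$, then computes its moment (Lemma~\ref{le:moment}). The constraints~\eqref{eq:PjQjnottoofar} and~\eqref{eq:PjQjnottooclose} on the $[P_j,Q_j]$ are engineered exactly so the $(\ell+1)!^2$ in the moment bound does not overwhelm the saving. Without this single-extraction-plus-amplification mechanism the sketch, as written, does not close. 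A further difference worth noting: the residual set $\mathcal{U}$ (where every prime polynomial stays large) is sparse and is treated by a separate argument combining the Hal\'asz inequality for primes with Hal\'asz's theorem on multiplicative functions (this is where $(\log X)^{-1/50}$ actually comes from); and the paper first restricts to a set $\mathcal{S}$ of integers having a prime factor in each $[P_j,Q_j]$, which removes the Ramar\'e error term you carry and is accounted for by a sieve bound.

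Two smaller mischaracterisations: the $\log\log h/\log h$ term is not a contribution from primes near $h$; it is the sieve density $\asymp\log P_1/\log Q_1$ of integers with no prime factor in $[P_1,Q_1]$, where $Q_1=h$ and the iteration conditions force $P_1\gtrsim(\log h)^{O(1)}$. And $(\log h)^{1/3}$ does not arise as $V^2$; it comes from the parameter $H_j$ controlling the dyadic subdivision of $[P_j,Q_j]$ inside Ramar\'e's identity (Cauchy--Schwarz over the subdivision costs $H_j\log Q_j$, balanced against the pointwise saving $P_1^{-2\alpha_1}$ from the definition of $\mathcal{T}_1$).
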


Note that Theorem \ref{thm:main} allows $h, \delta$ and $f$ to vary uniformly.
For example taking $\delta = (\log h)^{-1/200}$ gives a saving of $2(\log h)^{-1/200}$ with an
exceptional set of at most $C X (\log h)^{-1/100}$.  
Already for the M\"obius function $\mu(n)$ Theorem \ref{thm:main} goes beyond what was previously known conditionally;
The density hypothesis implies that there are cancellations in the sum of 
$\mu(n)$, but ``only'' in almost all intervals $x \leq n \leq x + h$ of length $h \geq x^{\varepsilon}$ whereas the Riemann hypothesis implies cancellations of $\mu(n)$ in almost all intervals but again ``only'' if
$h > (\log X)^{A}$ for some constant $A > 0$ (by unpublished work of Peng Gao). Unconditionally, using results towards the density hypothesis, it was previously known that there are cancellation of $\mu(n)$ in almost all intervals of length $x^{1/6 + \varepsilon}$
(a result due to Ramachandra \cite{Ramachandra}).

One naturally wonders if it is possible to establish Theorem \ref{thm:main} in all intervals of
length $h \asymp \sqrt{X}$. However, this is not possible in general, since it would require us to
control the contribution of the large primes factors which is completely arbitrary for general $f$. 
We prove however a bilinear version of Theorem \ref{thm:main} which holds in all intervals of length $\asymp \sqrt{X}$.
The bilinear structure allows us to eliminate the contribution of the large primes. 
\begin{theorem} \label{thm:bilinear}
Let $f : \mathbb{N} \rightarrow [-1,1]$ be a multiplicative function. Then, for any $10 \leq h \leq x$, 
$$
\frac{1}{h \sqrt{x} \log 2} \sum_{\substack{x \leq n_1 n_2  \leq x + h \sqrt{x} \\  \sqrt{x} \leq n_1 \leq 2 \sqrt{x}}} f(n_1) f(n_2) = \Big ( \frac{1}{\sqrt{x}} \sum_{\sqrt{x} \leq n \leq 2 \sqrt{x}} f(n) \Big )^2 + O \Big ( \frac{\log\log h}{\log h}
+ \frac{1}{(\log x)^{1/100}} \Big ). 
$$
\end{theorem}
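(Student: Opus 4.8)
The plan is to view the left-hand side as a weighted average over $n_1$ of short sums of $f$: fixing $n_1\in[\sqrt x,2\sqrt x]$, the inner sum runs over $n_2$ in an interval of length $h_{n_1}:=h\sqrt x/n_1\in[h/2,h]$ with left endpoint $x/n_1\asymp\sqrt x$, and after collecting the normalisation one has $\frac{1}{h\sqrt x\log 2}\sum(\cdots)=\frac{1}{\log2}\sum_{\sqrt x\le n_1\le 2\sqrt x}\frac{f(n_1)}{n_1}\cdot\big(\frac{1}{h_{n_1}}\sum_{x/n_1\le n_2\le x/n_1+h_{n_1}}f(n_2)\big)$. One could try to feed each inner sum into Theorem~\ref{thm:main}; this, however, costs the parameter $\delta$ and, worse, is vacuous as soon as $h$ is only a power of $\log x$, whereas Theorem~\ref{thm:bilinear} is non-trivial there. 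So instead I would prove it directly by the method behind Theorem~\ref{thm:main}, the point being that the bilinear structure $f(n_1)f(n_2)$ is \emph{already present}: the Ramar\'e-type identity used to manufacture it in the single-sum case, and the loss of $\delta$ it entails, is not needed.

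Concretely, write the left side as $\frac{1}{h\sqrt x\log2}\sum_{x\le n\le x+H}a_n$ with $H=h\sqrt x$ and $a_n=\sum_{n=n_1n_2,\ \sqrt x\le n_1\le 2\sqrt x}f(n_1)f(n_2)$, so that $n\asymp x$ and $n_2\asymp\sqrt x$ (the ranges $h>\sqrt x$ need only a minor variant). A Perron/Parseval-type argument of the kind underlying Theorem~\ref{thm:main} then reduces matters to two tasks. First, evaluating the long average $\frac{1}{x\log2}\sum_{x\le n\le 2x}a_n$: a hyperbola-plus-partial-summation computation, using standard Lipschitz-type estimates for mean values of multiplicative functions (the mean of $f$ is stable across nearby scales), identifies it with $\Big(\frac{1}{\sqrt x}\sum_{\sqrt x\le n\le 2\sqrt x}f(n)\Big)^2$ up to the permitted error, the factor $\log2$ being exactly what makes the coefficient come out to $1$. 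Second, controlling the error, which reduces to bounding a mean square $\int_{\asymp\sqrt x/h}^{T_1}|F_1(\tfrac12+it)F_2(\tfrac12+it)|^2\,\frac{dt}{1+|t|}$, where $F_j(s)=\sum f(n_j)n_j^{-s}$ runs over the relevant interval of length $\asymp\sqrt x$ and $T_1$ is a cutoff beyond which a pointwise Hal\'asz bound suffices; it is precisely because the interval $[x,x+h\sqrt x]$ has square-root length that this can be done pointwise in $x$ rather than only for almost all $x$.

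The mean-square estimate is the main obstacle, and I would handle it through the pretentious dichotomy. Since $f$ is real-valued, $F_1(\tfrac12+it)$ and $F_2(\tfrac12+it)$ can be large only if $f$ pretends to be a real Dirichlet character $\psi$ (up to a negligible twist). If $f$ pretends to no such $\psi$, Hal\'asz's theorem gives $|F_j(\tfrac12+it)|\ll\sqrt x(\log x)^{-c}$ uniformly, and the second-moment mean value theorem for Dirichlet polynomials of length $\asymp\sqrt x$ then makes the integral $\ll(\log x)^{-c}$, matching the claimed error since in this case the main term is equally small. If $f$ pretends to a non-principal real character, Polya--Vinogradov cancellation makes both the claimed main term and the left-hand side $\ll(\log x)^{-c}$ — again matching. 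Finally, if $f$ pretends to the principal character we are in the main-term regime: the $t$-integral over $|t|\ge\sqrt x/h$ is negligible and the long-average computation supplies the answer. The crucial point — \emph{the} reason no $\delta$ is lost — is that any would-be conspiracy $n^{it_0}$ enters $F_1$ and $F_2$ identically, so it is captured by the square on the right-hand side rather than by an adjustable parameter as in Theorem~\ref{thm:main}; pinning down the precise error $\tfrac{\log\log h}{\log h}+(\log x)^{-1/100}$ and disposing of the ranges $h>\sqrt x$ is then routine bookkeeping along the lines of the proof of Theorem~\ref{thm:main}.
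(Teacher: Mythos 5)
Your first paragraph correctly diagnoses why naively feeding the inner $n_2$-sums into Theorem~\ref{thm:main} is doomed, and the intuition that a would-be conspiracy $n^{it_0}$ hits $M_1$ and $M_2$ simultaneously (so that the main term $\left(\frac{1}{\sqrt{x}}\sum f(n)\right)^2$ captures it and no adjustable $\delta$ is needed) is exactly the right heuristic. But the proposed mechanism --- pretentious dichotomy plus a pointwise Hal\'asz bound plus the mean value theorem --- does not actually close the estimate, and the claim that ``the Ramar\'e-type identity \ldots is not needed'' is incorrect: the paper's proof of Theorem~\ref{thm:bilinear} is routed through Theorem~\ref{th:longints}, which applies the Mellin transform, a Cauchy--Schwarz step $\int |M_1M_2|\,dt \le (\int|M_1|^2)^{1/2}(\int|M_2|^2)^{1/2}$, and then invokes Proposition~\ref{prop:MainProp} for each factor. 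Proposition~\ref{prop:MainProp} is precisely where the Buchstab/Ramar\'e decomposition, the $\mathcal{T}_j/\mathcal{U}$ splitting, the moment computation, and the two Hal\'asz large-value lemmas all live. Removing the restriction $n\in\mathcal{S}$ by the sieve is where the $\frac{\log\log h}{\log h}$ term in the error comes from; your sketch has no source for it.

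Here is why the pointwise Hal\'asz + MVT route fails quantitatively. Working on $\Re s=1$ with $M_j(1+it)=\sum_{\sqrt{x}\le n\le 2\sqrt{x}} f(n)n^{-1-it}$, the Perron/Mellin argument with a smooth cutoff reduces the error to bounding $\int_{T_0}^{\sqrt{x}/h}|M_1(1+it)M_2(1+it)|\,dt$ (plus a controllable tail), with $T_0\asymp(\log x)^{1/12}$. Lemmas~\ref{le:Hal} and~\ref{le:distest} do give the uniform pointwise bound $|M_j(1+it)|\ll(\log x)^{-c}$ for $|t|\ge T_0$. The mean value theorem gives $\int_{-T}^{T}|M_j(1+it)|^2\,dt\ll T/\sqrt{x}+1\ll 1$ for $T\le\sqrt{x}$. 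Neither bound, nor any combination of the two via Cauchy--Schwarz, produces $\int_{T_0}^{\sqrt{x}/h}|M_1M_2|\,dt=o(1)$ when $h$ is small: the pointwise route gives $\ll(\log x)^{-c}\,(\sqrt{x}/h)^{1/2}$, which diverges, and the MVT route gives $\ll 1$, which is merely the trivial bound. What is needed is $\int_{T_0}^{T}|M_j(1+it)|^2\,dt\ll(\tfrac{T}{\sqrt{x}/Q_1}+1)\cdot(\text{small})$, i.e. a bound that stays $o(1)$ all the way up to $T\asymp\sqrt{x}/Q_1$; that is Proposition~\ref{prop:MainProp}, and Hal\'asz alone cannot deliver it. The ``real character'' trichotomy is also not the operative dichotomy here: the relevant split is on the sizes of short prime Dirichlet polynomials $\sum_{P\le p\le Q} f(p)p^{-1-it}$ (the $\mathcal{T}_j$ versus $\mathcal{U}$ decomposition), not on whether $f$ pretends to be a quadratic character, and the Polya--Vinogradov step you invoke has no role in the argument. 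So the proposal identifies the right target integral but replaces the paper's central engine with a tool that is an order of magnitude too weak.
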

%{\tt There is some annoyance coming to the main term when the long interval contribution is calculated carefully (i.e. correctly) --- to simplify things I removed the extra cross-condition $n_2 \sim \sqrt{x}$ as already $n_1 n_2$ in a short interval gives good control of size of $n_2$ and with both conditions we would need to study the interplay. However, still we get $\sum f(n_1)/n_1$ instead of just $\sum f(n)$ which brings in $\log 2$. If $n_1$ would run over $[\sqrt{x}, e\sqrt{x}]$, this log would not appear, but the normalization by $1/\sqrt{x}$ would be just as unnatural, so I'm not sure what's the best way to do this}
An important feature of Theorem \ref{thm:bilinear} is that it holds uniformly in $h$ and $f$. 
Theorem \ref{thm:bilinear}
allows us to show the existence of many $X^{\varepsilon}$ smooth numbers in intervals of length $\asymp \sqrt{X}$. 
%Alternatively using ideas of Croot \cite{CrootSmooth} we could deduce this from Theorem \ref{thm:main}. 
Alternatively we could have deduced this from Theorem \ref{thm:main} using ideas of Croot \cite{CrootSmooth} (building on earlier work of Friedlander and Granville \cite{FriedlanderGranville}).
\begin{corollary} \label{cor:smooths}
Let $\varepsilon > 0$ be given. There exists a positive constant $C(\varepsilon)$ such that
the number of $X^{\varepsilon}$-smooth numbers in $[X, X + C(\varepsilon) \sqrt{X}]$ is
at least $\sqrt{X} (\log X)^{-4}$ for all large enough $X$. 
\end{corollary}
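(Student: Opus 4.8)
The plan is to apply Theorem~\ref{thm:bilinear} to the indicator function of the $X^{\varepsilon}$-smooth numbers. I would set $f(n)=1$ if $P^{+}(n)\le X^{\varepsilon}$ (with $P^{+}(n)$ the largest prime factor of $n$) and $f(n)=0$ otherwise; this $f$ is multiplicative, takes values in $\{0,1\}\subseteq[-1,1]$, and has the key feature that every divisor of an $X^{\varepsilon}$-smooth number is again $X^{\varepsilon}$-smooth. First I would bound the ``long average'' from below. Writing $\Psi(z,w)$ for the count of $w$-smooth integers up to $z$, the Dickman--de Bruijn estimate $\Psi(z,z^{1/u})=(\rho(u)+o(1))z$, uniform for $u$ in a fixed compact set, gives $\Psi(z,X^{\varepsilon})=(\rho(1/(2\varepsilon))+o(1))z$ for all $z\in[\sqrt{X},2\sqrt{X}]$, and hence
\[
\frac{1}{\sqrt{X}}\sum_{\sqrt{X}\le n\le 2\sqrt{X}}f(n)\ \longrightarrow\ \rho\!\left(\tfrac{1}{2\varepsilon}\right)>0 .
\]
I then fix $\kappa=\kappa(\varepsilon):=\tfrac12\rho(1/(2\varepsilon))>0$, so that this average exceeds $\kappa$ once $X$ is large in terms of $\varepsilon$.

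Next I would invoke Theorem~\ref{thm:bilinear} with $x=X$ and with $h=h(\varepsilon)$ a large constant, chosen so large that the (absolute) $O(\tfrac{\log\log h}{\log h})$ error there is at most $\kappa^{2}/4$, and also so large that $\tfrac{\kappa^{2}}{4}h\log 2\ge 1$. Then for $X$ large in terms of $\varepsilon$ (so that the $O((\log X)^{-1/100})$ error is $\le\kappa^{2}/4$), the theorem yields
\[
\sum_{\substack{X\le n_{1}n_{2}\le X+h\sqrt{X}\\ \sqrt{X}\le n_{1}\le 2\sqrt{X}}}f(n_{1})f(n_{2})\ \ge\ \tfrac{\kappa^{2}}{2}\,h\sqrt{X}\log 2 .
\]
Because divisors of smooth numbers are smooth, grouping this sum by $n=n_{1}n_{2}$ shows that its left-hand side equals $\sum_{n\in S}r(n)$, where $S$ is the set of $X^{\varepsilon}$-smooth integers in $[X,X+h\sqrt{X}]$ and $r(n):=\#\{d\mid n:\sqrt{X}\le d\le 2\sqrt{X}\}$. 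I would then set $C(\varepsilon):=h(\varepsilon)$, so that $S$ is precisely a set of $X^{\varepsilon}$-smooth numbers lying in $[X,X+C(\varepsilon)\sqrt{X}]$.

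Finally I would extract a lower bound for $|S|$ from $\sum_{n\in S}r(n)\gg_{\varepsilon}\sqrt{X}$. The obstacle here is that a given $n$ may be represented by as many as $d(n)$ pairs $(n_{1},n_{2})$, and $d(n)$ can exceed any fixed power of $\log X$; so I would truncate at $T:=(\log X)^{4}$. Using $r(n)\le d(n)$ together with the standard second-moment bound for the divisor function in short intervals (Shiu's theorem), $\sum_{n\in[X,X+h\sqrt{X}]}d(n)^{2}\ll h\sqrt{X}(\log X)^{3}$ with an absolute implied constant, one gets
\[
\sum_{\substack{n\in S\\ r(n)>T}}r(n)\ \le\ \frac{1}{T}\sum_{n\in[X,X+h\sqrt{X}]}d(n)^{2}\ \ll\ \frac{h\sqrt{X}}{\log X}\ \le\ \tfrac{\kappa^{2}}{4}h\sqrt{X}\log 2
\]
for $X$ large in terms of $\varepsilon$, whence $\sum_{n\in S,\ r(n)\le T}r(n)\ge\tfrac{\kappa^{2}}{4}h\sqrt{X}\log 2$. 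Since each surviving term is at most $T$,
\[
|S|\ \ge\ \#\{n\in S:\ 1\le r(n)\le T\}\ \ge\ \frac{1}{T}\sum_{\substack{n\in S\\ r(n)\le T}}r(n)\ \ge\ \frac{\kappa^{2}h\log 2}{4(\log X)^{4}}\,\sqrt{X}\ \ge\ \frac{\sqrt{X}}{(\log X)^{4}},
\]
using $\tfrac{\kappa^{2}}{4}h\log 2\ge 1$ at the last step, which is the claim. The calibration of $h(\varepsilon)$ --- large enough to beat the $\tfrac{\log\log h}{\log h}$ term of Theorem~\ref{thm:bilinear} and to absorb all constants into a leading factor $1$ --- is the only genuinely delicate point; the divisor-function truncation handling repeated representations is routine.
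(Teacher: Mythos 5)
Your argument is correct and is precisely the route the paper itself recommends for the qualitative statement: apply Theorem~\ref{thm:bilinear} to $f=\mathbf{1}[P^{+}(n)\le X^{\varepsilon}]$, lower-bound the long average by $\rho(1/(2\varepsilon))+o(1)$, take $h$ large enough to dominate the $\log\log h/\log h$ error, and then pass from the (weighted) bilinear count $\sum_{n}r(n)\gg\sqrt{X}$ to a count of smooth $n$ using the second moment $\sum_{n\in[X,X+h\sqrt X]}d(n)^{2}\ll h\sqrt X(\log X)^{3}$ from Shiu's theorem. The paper does this last step with Cauchy--Schwarz, which is the same estimate you encode as a divisor-threshold truncation at $T=(\log X)^{4}$; both give $|S|\gg\sqrt X/(\log X)^{4}$. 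The only substantive difference is that the paper's written proof uses Theorem~\ref{th:longints} with a tailored choice of $\mathcal{S}$ so as to pin down an explicit $C(\varepsilon)=\rho(1/\varepsilon)^{-13}$; since the corollary as stated only requires the existence of some $C(\varepsilon)$, your direct appeal to Theorem~\ref{thm:bilinear} suffices and is, if anything, cleaner.
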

This recovers unconditionally a conditional (on the Riemann Hypothesis) result of Soundararajan \cite{SoundararajanPre}
and comes close to settling the long-standing conjecture that every interval
$[x, x + \sqrt{x}]$, with $x$ large enough, contains $x^{\varepsilon}$-smooth numbers (see for example \cite[Challenge Problem 2000 in Section 4]{GranvilleSurvey}). The later conjecture
is motivated by attempts at rigorously estimating the running time of 
Lenstra's elliptic curve factoring algorithm \cite[Section 6]{Lenstra}. Our result also improves on earlier
work of Croot \cite{CrootSmooth}, Matom\"aki \cite{MatomakiSmooth, MatomakiSmooths2} and Balog \cite{BalogSmooth}. Finally for small fixed $\varepsilon$, a more difficult to state variant of Theorem \ref{thm:bilinear} (see section 2) 
shows that $C(\varepsilon) = \rho(1/\varepsilon)^{-13}$ is admissible, where $\rho(u)$ is the Dickman-de Brujin function. In fact with a little additional work the constant $C(\varepsilon)$ can be reduced further to $\rho(1/\varepsilon)^{-7}$ 
and the exponent $4$ in $\sqrt{x} (\log x)^{-4}$ could be refined to $\log 4$.

Another corollary of Theorem \ref{thm:main} is related to Chowla's conjecture,
\begin{equation} \label{eq:liouville}
\frac{1}{X} \sum_{n \leq X} \lambda(n) \lambda(n+1) = o(1) \ , \ \text{as } x \rightarrow \infty
\end{equation}
with $\lambda(n):= (-1)^{\Omega(n)}$ Liouville's function. Chowla's conjecture is believed to be 
at least as deep as the twin prime conjecture \cite{Hildebrand}. This motivates the old folklore 
conjecture according to which the sum (\ref{eq:liouville}) is, for all $X$ large enough, bounded 
in absolute value by $\leq 1 - \delta$ 
for some $\delta > 0$. For example, Hildebrand writes in \cite{HildebrandReview}``one would naturally expect the above sum to be $o(x)$ when $x \rightarrow \infty$, but even the much weaker relation
$$
\liminf_{x \rightarrow \infty} \frac{1}{x} \sum_{n \leq x} \lambda(n)\lambda(n+1) < 1
$$
is not known and seems to be beyond reach of the present methods''. 
Theorem \ref{thm:main} allows us to settle
this conjecture in a stronger form.
\begin{corollary} \label{cor:chowla}
For every integer $h \geq 1$, there exists $\delta(h) > 0$ such that
$$
\frac{1}{X} \Bigg | \sum_{n \leq X} \lambda(n) \lambda(n + h) \Bigg | \leq 1 - \delta(h)
$$
for all large enough $X > 1$. 
%In fact for any $h > 0$ and any
%multiplicative function $f: \mathbb{N} \rightarrow \{-1, 0, 1\}$ 
%satisfying $f(n) < 0$ for some $n$, and $f(2^k) = f(2)^k$ for all $k$, there
%exists a constant $\delta_f(h) > 0$ such that
%$$
%\frac{1}{X} \Bigg | \sum_{n \leq X} f(n) f(n+h) \Bigg |  < 1 - \delta_f(h)
%$$ 
In fact the same results holds for any completely multiplicative
function $f: \mathbb{N} \rightarrow [-1, 1]$ such that 
%$f(2^k) = f(2)^k$ and {\tt I think that for h \neq 1 a different condition would be needed --- assuming complete multiplicativity simplifies things}
$f(n) < 0$ for some $n > 0$. 
\end{corollary}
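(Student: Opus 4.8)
The plan is to deduce Corollary~\ref{cor:chowla} from Theorem~\ref{thm:main} by a short-intervals averaging argument. Fix a completely multiplicative $f:\mathbb{N}\to[-1,1]$ with $f(n_0)<0$ for some $n_0$. By splitting $[1,X]$ dyadically it suffices to control, for $X$ large, the average of $f(n)f(n+h)$ over $n\in[X,2X]$. The key observation is that if for many $x\in[X,2X]$ the short average $\frac1h\sum_{x\le n\le x+h}f(n)$ is close to a fixed number $m=\frac1X\sum_{X\le n\le 2X}f(n)$, then on those short intervals $f(n)$ and $f(n+h)$ are, on average, both roughly equal to $m$, so their product is roughly $m^2\le 1$ --- and we only need a \emph{uniform} gap below $1$. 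The point of replacing $h$ by a large constant (depending on the desired $\delta(h)$) is that Theorem~\ref{thm:main}, with $h$ a suitably large constant $H$ and $\delta$ a small constant, tells us that $\frac1H\sum_{x\le n\le x+H}f(n)$ is within $\delta+C'\frac{\log\log H}{\log H}=:\eta$ of $m$ for all but $o(X)$ values of $x$.

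First I would set $H$ large enough (in terms of $h$) that $\eta<1/100$, say, and note that $H\equiv 0\pmod{h}$ can be arranged. On a typical short interval $[x,x+H]$ write $S=\frac1H\sum_{x\le n\le x+H}f(n)$; then $\frac1H\sum_{x\le n\le x+H}f(n)^2\ge S^2$ is not immediately what we want, so instead I would exploit the completely multiplicative structure: for a prime $p\nmid$ (something), $f(pn)=f(p)f(n)$, and one can relate $\sum f(n)f(n+h)$ to an average involving $f$ on arithmetic-progression-like patterns. Concretely, the cleanest route: if $f(n)f(n+h)$ were close to $1$ on a proportion $1-o(1)$ of all $n$, then $f(n)$ would have to be close to $1$ on almost all $n$ as well (a covering/transitivity argument, using that $n\mapsto n+h$ has well-behaved orbit structure); but $f(n_0)<0$ forces, by complete multiplicativity, $f(n_0^k)=f(n_0)^k$ which oscillates or is bounded away from $1$ for a positive density of $n$, and more to the point $f(p)<0$ for some prime $p$ (since $f(n_0)<0$ and $f$ is completely multiplicative), whence $f$ is bounded away from $1$ on a positive proportion of integers --- contradiction. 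To turn ``bounded away from $1$'' into an actual $1-\delta(h)$ bound I would run the contrapositive quantitatively: assume $\frac1X|\sum f(n)f(n+h)|\ge 1-\delta$ with $\delta$ tiny; deduce via Theorem~\ref{thm:main} that $m$ itself must satisfy $|m|\ge 1-\eta'$ for a small $\eta'$; then deduce $f(n)$ is within $\eta''$ of a fixed value $\pm1$ for all but $o(X)$ of $n$; and finally contradict this using the chosen prime $p$ with $f(p)<0$ by a sieve/density computation showing a positive proportion of $n\le X$ have $p\|n$ and hence $f(n)=f(p)f(n/p)$ differs from that fixed value by a bounded amount.

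The main obstacle I expect is the passage from ``the product $f(n)f(n+h)$ is close to $1$ on average'' back to ``$f$ itself is close to a constant''. This is not purely formal: $f(n)f(n+h)\approx 1$ says $f(n)\approx f(n+h)$, i.e.\ $f$ is nearly $h$-periodic on average, which by itself does not pin down a constant. The resolution is to combine this near-periodicity with the short-interval conclusion of Theorem~\ref{thm:main}: the long average $m$ is the \emph{same} constant that the short averages cluster around, so near-$h$-periodicity plus near-constancy of block averages forces genuine near-constancy of $f$; alternatively one iterates, replacing $h$ by $2h,3h,\dots$, or uses that completely multiplicative $f$ with $f\approx 1$ on a density-$1$ set must have $f(p)\approx 1$ for all small primes, which is exactly what $f(n_0)<0$ rules out. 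I would also need to be careful that the exceptional set in Theorem~\ref{thm:main} is genuinely $o(X)$ for the chosen constant $H$; since $H$ is fixed the bound $CX((\log H)^{1/3}H^{-\delta/25}+\delta^{-2}(\log X)^{-1/50})$ is $o(X)$ as $X\to\infty$ for any fixed $\delta>0$, so this causes no trouble, but it does mean $\delta(h)$ is ineffective-looking and must be extracted by choosing constants in the right order. Assembling these pieces gives the desired $\delta(h)>0$.
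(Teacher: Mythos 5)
The proposal has a genuine gap, and in fact the central deduction it hinges on is false as stated. You claim that from $\frac1X\bigl|\sum_{n\le X} f(n)f(n+h)\bigr|\ge 1-\delta$ (for tiny $\delta$) one can deduce via Theorem~\ref{thm:main} that the long mean $m=\frac1X\sum_{X\le n\le 2X}f(n)$ satisfies $|m|\ge 1-\eta'$, and thence that $f$ is close to a constant $\pm1$ on a density-one set. This implication does not hold. Near-$h$-periodicity of $f$ combined with block averages concentrating near $m$ does not pin $m$ to be near $\pm1$: a $2h$-periodic $\pm1$-valued sequence with balanced residues has all long block averages equal to $0$, so $m=0$, while $f(n)f(n+h)=1$ identically. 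Multiplicativity does not rescue the step either: take $f=\chi$ a real primitive quadratic character modulo a large prime $q$ and $h=q$. Then $f(n)f(n+h)=\chi(n)^2=\mathbf 1_{(n,q)=1}$, so $\frac1X\sum_{n\le X}f(n)f(n+h)=1-1/q+o(1)$, which can be made as close to $1$ as you like, while $m=o(1)$. So the conclusion $|m|\ge 1-\eta'$ fails outright, and with it the subsequent ``$f$ near-constant, contradict $f(p)<0$'' step.

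There is a second, independent gap: you never address the case where the correlation is close to $-X$. If $f(n)f(n+h)\approx -1$ for almost all $n$ (near anti-$h$-periodicity), then block averages are small and $m\approx 0$, so no amount of the above reasoning produces a contradiction; this case requires a genuinely different idea. The paper handles it with a purely algebraic observation that exploits complete multiplicativity at $2$: for every $n$,
\[
f(n)f(n+1)\cdot f(2n)f(2n+1)\cdot f(2n+1)f(2n+2)=\bigl(f(2)f(n)f(n+1)f(2n+1)\bigr)^2\ge 0,
\]
so among the three consecutive correlations $f(n)f(n+1)$, $f(2n)f(2n+1)$, $f(2n+1)f(2n+2)$ at least one is nonnegative. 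Summing over $n\le x$, this shows a positive proportion of $m\le 2x$ have $f(m)f(m+1)\ge 0$, which gives the lower bound $\sum_{n\le x}f(n)f(n+1)\ge -(1-\delta)x$. Your proposal contains no substitute for this trick.

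Finally, the route the paper actually takes for the upper bound is cleaner than your near-constancy sketch. It first establishes Corollary~\ref{cor:signchanges} (a positive proportion of sign changes) --- itself a consequence of Corollary~\ref{cor:signchangsinints} and Theorem~\ref{th:ThminS}, i.e.\ the version of Theorem~\ref{thm:main} restricted to the set $\mathcal S$, applied to $f$ and $|f|$ --- and then the upper bound is immediate: a positive proportion $\delta$ of $n$ have $f(n)f(n+1)\le 0$, so $\sum_{n\le x}f(n)f(n+1)\le(1-\delta)x$. The general $h\ge 2$ is reduced to $h=1$ by splitting on $h\mid n$ versus $h\nmid n$. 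The qualitative idea you gesture at (``$f(n)f(n+1)>0$ almost always would contradict $f(p)<0$'') is in the right direction, but it only makes sense after the sign-change density is established precisely, and by itself it only handles the $+1$ side. As written, your argument does not prove the corollary.
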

For $h = 1$ Corollary \ref{cor:chowla} also holds for any multiplicative $f: \mathbb{N} \rightarrow [-1,1]$ which is completely multiplicative at the prime $2$ (this rules out, for example, the $f$ such that $f(2^k) = -1$ and $f(p^k) = 1$ for all $p \geq 3, k \geq 1$). The ternary analogue
of Corollary \ref{cor:chowla} concerning cancellations in the sum of
$\lambda(n)\lambda(n+1)\lambda(n+2)$ is surprisingly much easier; it is
stated as an exercise in Elliott's book \cite[Chapter 33]{Elliott} (see also
\cite{ElliottPaper} and \cite{Cassaigne}). 

Corollary \ref{cor:chowla} is closely related to the problem of counting sign changes of $f(n)$. 
Using Hal\'asz's theorem one can show that if $\sum_{f(p) < 0} 1/p = \infty$ and $f(n) \neq 0$ for a positive proportion of the integers $n$ then the non-zero
values of $f(n)$ are 
half of the time positive and half of the time negative (see \cite[Lemma 2.4]{MatRadHecke} or \cite[Lemma 3.3]{ElsGun}).
Since we expect $f(n)$ and
$f(n+1)$ to behave independently this suggests that, for non-vanishing $f$ such that $\sum_{f(p) < 0} 1/p = \infty$, there should be about $x/2$ sign changes among integers $n \leq x$. When $f$ is allowed to be zero we say that $f$ has $k$ sign changes in $[1,x]$ 
if there are integers $1 \leq n_1 < n_2 < \ldots < n_{k+1} \leq x$ such that
$f(n_i) \neq 0$ for all $i$ and $f(n_i), f(n_{i+1})$ are of opposite signs for all $i \leq k$. 
For \textit{non-lacunary} multiplicative $f$, i.e multiplicative $f$ such that $f(n) \neq 0$ 
on a positive proportion of the integers, we still expect $\asymp x$ sign changes in $[1,x]$. 
\begin{corollary}\label{cor:signchanges}
Let $f: \mathbb{N} \rightarrow \mathbb{R}$ be a multiplicative function. Then $f(n)$ has a positive 
proportion of sign changes 
if and only if $f(n) < 0$ for some integer $n > 0$ and $f(n) \neq 0$
for a positive proportion of integers $n$. 
\end{corollary}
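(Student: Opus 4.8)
The "only if" direction is essentially immediate: if $f$ is lacunary then $f(n) = 0$ on all but $o(x)$ integers, so the number of sign changes in $[1,x]$ is trivially $o(x)$, and if $f(n) \geq 0$ for all $n$ there are no sign changes at all. So the content is the "if" direction: assuming $f(n) < 0$ for some $n > 0$ and $f(n) \neq 0$ on a positive proportion of integers, I must produce $\gg x$ sign changes in $[1,x]$.

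The plan is to reduce to Theorem \ref{thm:main}. First I would dispose of the case $\sum_{f(p) < 0} 1/p = \infty$: by the Halász-type input quoted in the introduction (\cite[Lemma 2.4]{MatRadHecke}), the nonzero values of $f$ are then asymptotically equidistributed between positive and negative, and combined with non-lacunarity a routine argument gives $\gg x$ sign changes. So assume from now on $\sum_{f(p) < 0} 1/p < \infty$. Pick a prime $p_0$ and exponent $k$ with $f(p_0^k) < 0$ (such a prime power exists since $f(n_0) < 0$ for some $n_0$, and $f$ is multiplicative). The idea is to compare $f$ on arithmetic-progression-like fibers: look at integers $m$ coprime to $p_0$ lying in a short interval, and compare the sign of $f(m)$ with that of $f(p_0^k m) = f(p_0^k) f(m) = -|f(p_0^k)|\, f(m)$, which has the opposite sign whenever $f(m) \neq 0$. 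If in a typical short window $[x, x+h]$ (with $h$ a large fixed constant, or slowly growing) the function $f$ has a nonzero value of a definite sign with positive probability, then each such window, together with its $p_0^k$-dilate, contributes a sign change, and these are spread out enough to total $\gg x$.

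To make "a nonzero value of a definite sign with positive probability in a typical short window" precise is where Theorem \ref{thm:main} enters. Apply the theorem to $g := f \cdot \mathbf{1}_{(n,p_0)=1}$ (still multiplicative) and, separately, to $|g| = |f|\cdot \mathbf{1}_{(n,p_0)=1}$, with $h$ a suitably large absolute constant and $\delta$ small but fixed. Since $\sum_{g(p)<0} 1/p < \infty$, Halász's theorem shows the long average $\frac1X\sum_{X \le n \le 2X} g(n)$ converges to a nonzero limit $c_1$, while $\frac1X \sum_{X \le n \le 2X}|g(n)|$ converges to a limit $c_2 > 0$ (here I use non-lacunarity of $f$ together with $\sum 1/p$ over the bad primes being finite, so that removing multiples of $p_0$ still leaves a positive proportion of nonzero values — one can choose $p_0$ to be a prime with $f(p) \neq 0$ to guarantee this, or argue directly). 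Theorem \ref{thm:main} then says that for all but $o(X)$ integers $x \in [X, 2X]$ the short averages $\frac1h\sum_{x \le n \le x+h} g(n)$ and $\frac1h \sum_{x\le n \le x+h}|g(n)|$ are within $\delta + C' \log\log h/\log h$ of $c_1$ and $c_2$ respectively; choosing $h$ large and $\delta$ small relative to $c_2$ (and, if $c_1 = 0$, working instead with the second moment or with $|g|$ to force the existence of a nonzero value, then using the first relation only to control sign), one concludes that for $\gg X$ values of $x$ the interval $[x, x+h]$ contains an integer $m$, coprime to $p_0$, with $f(m)$ of a prescribed sign. Pairing $[x, x+h]$ with $[p_0^k x, p_0^k x + p_0^k h]$ and using $f(p_0^k m) = f(p_0^k) f(m)$ produces, for $\gg X$ disjoint pairs of windows, an integer carrying a $+$ value and an integer carrying a $-$ value; interleaving these along $[1, 2X]$ gives $\gg X$ sign changes. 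Summing dyadically over $X$ gives the positive proportion.

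The main obstacle is the case analysis forced by the possibility $c_1 = 0$ (equivalently $f$ could be "mean-zero" so that Theorem \ref{thm:main} only tells us the short sums are small, not that they detect a sign): here I cannot read off a sign directly from the first moment, so I must instead guarantee that a typical short window contains \emph{both} a positive and a negative nonzero value of $g$ — this requires applying Theorem \ref{thm:main} to the indicator-weighted variants $g^{+} := g \cdot \mathbf{1}_{g > 0}$ and $g^{-} := -g\cdot \mathbf{1}_{g<0}$ (which are no longer multiplicative, so one must instead argue via $|g|$ and $g$ together: $\frac1h\sum(|g|+g)$ large forces a positive value, $\frac1h\sum(|g|-g)$ large forces a negative value, and at least one of $c_2 \pm c_1$ is $\geq c_2 > 0$). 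Handling this bookkeeping cleanly, and making sure the exceptional sets from the several applications of Theorem \ref{thm:main} still leave $\gg X$ good $x$, is the technical heart; everything else is routine.
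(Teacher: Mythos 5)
Your core idea --- apply Theorem \ref{thm:main} to $f$ and $|f|$ and use the prime power $p_0^\nu$ with $f(p_0^\nu)<0$ to flip signs --- is the right one, but the execution has a genuine gap and is far more complicated than the paper's unified argument.

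The gap is in the step ``pairing $[x,x+h]$ with $[p_0^kx, p_0^kx+p_0^kh]$ \dots interleaving these along $[1,2X]$ gives $\gg x$ sign changes.'' The $+$ values you locate live near $x\in[X,2X]$ and the $-$ values you locate live near $p_0^kx\in[p_0^kX,2p_0^kX]$, two disjoint scales. Producing $\gg X$ ordered pairs $(m_+,m_-)$ with $m_+<m_-$, $f(m_+)>0>f(m_-)$ does \emph{not} produce $\gg X$ sign changes: if all the $m_+$ precede all the $m_-$ you get one sign change. What you actually need is that a \emph{single} bounded-length window around a typical point contains both a positive and a negative value, and that is precisely what is lost when you dilate. (The same issue appears in your dispatch of the case $\sum_{f(p)<0}1/p=\infty$: equidistribution of signs plus non-lacunarity does not by itself give sign changes without a local statement, so you still need Theorem \ref{thm:main} there.)

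The paper avoids all of this, and both of your case splits, with one observation. Normalize $f\in\{-1,0,1\}$. For $n$ coprime to $p_0$,
\[
\bigl(|f|\mp f\bigr)(n) + \bigl(|f|\mp f\bigr)(p_0^\nu n) = 2|f(n)|,
\]
since $f(p_0^\nu n)=-f(n)$ and $|f(p_0^\nu n)|=|f(n)|$. Summing this over $n\le X/p_0^\nu$, $n\in\mathcal S$, $(n,p_0)=1$, and invoking the fundamental lemma of the sieve (non-lacunarity is equivalent to $\sum_{f(p)=0}1/p<\infty$), one gets simultaneously for both signs
\[
\sum_{\substack{n\le X\\ n\in\mathcal S}}\bigl(|f(n)|\mp f(n)\bigr) \gg X,
\]
with no case analysis on $\sum_{f(p)<0}1/p$ and no discussion of whether the long mean of $f$ vanishes. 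Theorem \ref{th:ThminS} applied to $f$ and to $|f|$ then transfers both lower bounds to almost every short interval $[x,x+h]$, which therefore contains both a positive and a negative value of $f$. This gives $\gg X$ sign changes in $[X,2X]$ directly. You should replace your dilate-and-interleave step with this pairing identity, which also eliminates the need to pass to $g=f\cdot\mathbf 1_{(n,p_0)=1}$ or to track the constants $c_1,c_2$.
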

There is a large literature on sign changes of multiplicative functions. % (see for example \cite{1,2,3,4,5}).
For specific multiplicative functions Corollary \ref{cor:signchanges} improves on earlier
results for:
\begin{itemize}
\item The M\"obius function. The previous best result was due to Harman, Pintz and Wolke \cite{HPW85} 
who obtained more than $x / (\log x)^{7 + \varepsilon}$ sign changes for $n \leq x$, using Jutila's bounds towards the density hypothesis (\cite{Jutila}).
\item Coefficients of $L$-functions of high symmetric powers of holomorphic Hecke cusp forms. In
this setting the best previous result was $x^{\delta}$ sign changes with some $\delta < 1$ \cite{LauLiuWu10}. 
\item Fourier coefficients of holomorphic Hecke cusp forms. In this case Corollary \ref{cor:signchanges}
recovers a recent result of the authors \cite{MatRadHecke}.
\end{itemize}
As observed by Ghosh and Sarnak in \cite{GhoshSarnak},
the number of sign changes of $\lambda_f(n)$ for $n \leq k^{1/2}$ (with $k$ the weight of $f$) is
related to the number of zeros of $f$ on the vertical geodesic high in the cusp. 
A suitable variation of Corollary \ref{cor:signchanges} (again deduced from Theorem \ref{thm:main}) has 
consequences for this problem. These results are discussed in a paper by the authors and Steve Lester 
(see \cite{LesterMatomakiRadziwill}).

For general multiplicative functions, Corollary \ref{cor:signchanges} improves on earlier work of
Hildebrand \cite{Hildebrand} and Croot \cite{Croot}. Croot obtained $x \exp(- (\log x)^{1/2 + o(1)})$
sign changes for completely multiplicative non-vanishing functions. Hildebrand showed that there
exists an infinite (but quickly growing) subsequence $x_k$ such that  $f$ has more than $x_k (\log\log x_k)^{-4}$
sign changes on the integers $n \leq x_k$. 

Corollary \ref{cor:signchanges} suggests
that unless $f$ is non-negative, there should be few long clusters of consecutive integers at which $f$
is of the same sign. Our next corollary 
confirms this expectation. 
\begin{corollary} 
\label{cor:signchangsinints}
Let $f: \mathbb{N} \rightarrow \mathbb{R}$ be a multiplicative function. If $f(n) < 0$ for some integer $n$
and $f(n) \neq 0$ for a positive proportion of integers $n$, then, for any $\psi(x) \to \infty$, almost every interval 
$[x, x + \psi(x)]$ contains a sign change of $f$. 
\end{corollary}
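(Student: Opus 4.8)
The plan is to apply Theorem~\ref{thm:main} to the two auxiliary multiplicative functions $\tilde f := \operatorname{sgn}\circ f$ and $h_0 := \mathbf 1_{f\neq 0}$, and to rule out any positive-density family of ``sign-change-free'' intervals using Corollary~\ref{cor:signchanges}. First I would make routine reductions: we may assume $\psi$ is nondecreasing (replace $\psi(x)$ by $\inf_{t\ge x}\psi(t)$, which still tends to infinity and is no larger), and it then suffices to prove that for every dyadic block $[X,2X]$ with $X$ large, the number of $x\in[X,2X]$ for which $[x,x+H]$ contains no sign change of $f$ is $o(X)$, where $H=H(X):=\min(\lfloor\psi(X)\rfloor,X)$. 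Indeed $H\to\infty$, we have $2\le H\le X$ for $X$ large, and $[x,x+H]\subseteq[x,x+\psi(x)]$ for all $x\in[X,2X]$ since $\psi$ is nondecreasing, so summing these $o(X)$ bounds over dyadic blocks (and discarding the negligible initial segment) gives the corollary.

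Next, since $\operatorname{sgn}(f(mn))=\operatorname{sgn}(f(m))\operatorname{sgn}(f(n))$ and $\mathbf 1_{f(mn)\neq0}=\mathbf 1_{f(m)\neq0}\mathbf 1_{f(n)\neq0}$ whenever $(m,n)=1$, both $\tilde f$ and $h_0$ are multiplicative functions with values in $[-1,1]$, and $|\tilde f(n)|=h_0(n)$ for every $n$. If $[x,x+H]$ contains no sign change of $f$, then all nonzero values of $f$ on this interval share a common sign $\varepsilon\in\{\pm1\}$, so $\tilde f(n)=\varepsilon\,h_0(n)$ for $x\le n\le x+H$, whence
\[
\Bigl|\sum_{x\le n\le x+H}\tilde f(n)\Bigr|=\sum_{x\le n\le x+H}h_0(n).
\]
Now apply Theorem~\ref{thm:main} to $\tilde f$ and to $h_0$ with $h=H$ and $\delta=(\log H)^{-1/200}$. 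Then $\delta\to 0$, $C'\log\log H/\log H\to 0$, and the exceptional set has size $\ll X\bigl((\log H)^{1/3+1/100}H^{-(\log H)^{-1/200}/25}+(\log H)^{1/100}(\log X)^{-1/50}\bigr)=o(X)$, the last term being controlled precisely because $H\le X$. Hence for all but $o(X)$ values of $x\in[X,2X]$ the short averages of $\tilde f$ and of $h_0$ over $[x,x+H]$ agree with their long averages over $[X,2X]$ up to $o(1)$; combining this with the displayed identity, any such $x$ for which $[x,x+H]$ has no sign change satisfies
\[
\Bigl|\frac1X\sum_{X\le n\le 2X}\tilde f(n)\Bigr|\ge\frac1X\sum_{X\le n\le 2X}h_0(n)-o(1).
\]

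It remains to show this inequality fails for $X$ large. Since $\tilde f$ and $h_0$ are real-valued bounded multiplicative functions, Wirsing's theorem on mean values of real multiplicative functions shows they have mean values $L\in[-1,1]$ and $c\in[0,1]$, and the dyadic averages above converge to $L$ and $c$ respectively; non-lacunarity of $f$ forces $c>0$. On the other hand $f$ satisfies the hypotheses of Corollary~\ref{cor:signchanges}, so it has a positive proportion of sign changes; consequently the sign of $f$ flips $\gg x$ times along consecutive nonzero values up to $x$, so there are $\gg x$ integers $n\le x$ with $f(n)>0$ and $\gg x$ with $f(n)<0$. Thus $\{n:f(n)>0\}$ and $\{n:f(n)<0\}$ both have positive density, equal respectively to $(c+L)/2$ and $(c-L)/2$, so that $|L|<c$. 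For $X$ large this contradicts the last display, hence the set of $x\in[X,2X]$ with no sign change of $f$ in $[x,x+H]$ is contained in the union of the two Theorem~\ref{thm:main} exceptional sets, of total size $o(X)$; summing over dyadic blocks completes the argument.

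The only genuinely delicate points are (i) checking that the exceptional-set bound from Theorem~\ref{thm:main} is $o(X)$ uniformly no matter how fast $H=H(X)$ grows, which works because the truncation $H\le X$ keeps the term $(\log H)^{1/100}(\log X)^{-1/50}$ small, and (ii) the strict inequality $|L|<c$, which is exactly where the hypotheses---$f(n)<0$ for some $n$ and $f$ non-lacunary---enter, through Corollary~\ref{cor:signchanges} together with the existence of the relevant mean values.
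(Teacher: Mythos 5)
Your strategy is, in essence, the same comparison the paper makes: compare short and long averages of (a) the indicator of $f\neq 0$ and (b) $f$ (respectively $\operatorname{sgn} f$), and observe that a sign-change-free interval forces the short average of $\operatorname{sgn}(f)$ to have absolute value equal to that of $\mathbf 1_{f\neq 0}$, which cannot happen for almost all $x$. The paper works with $|f|\pm f$ and Theorem~\ref{th:ThminS} (giving a power-of-$h$ saving in the exceptional set, as remarked at the end of Section~2), while you work with $\operatorname{sgn}(f)$, $\mathbf 1_{f\neq 0}$ and Theorem~\ref{thm:main} together with Wirsing's theorem; these are interchangeable bookkeeping choices and your reductions ($\psi\mapsto\inf_{t\ge x}\psi(t)$, dyadic decomposition, $H=\min(\lfloor\psi(X)\rfloor,X)$, and the choice $\delta=(\log H)^{-1/200}$) are all correct.

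There is, however, a genuine circularity: to obtain $|L|<c$ you invoke Corollary~\ref{cor:signchanges}, but in the paper that corollary is \emph{deduced from} the proof of Corollary~\ref{cor:signchangsinints} (``Follows immediately from the proof of Corollary~\ref{cor:signchangsinints}''). It has no independent proof in the paper, so as written your argument assumes what it needs to prove. The fix is cheap and is exactly what the paper does: establish directly that both $\{n : f(n)>0\}$ and $\{n : f(n)<0\}$ have positive lower density. After reducing to $\tilde f\in\{-1,0,1\}$, let $p_0^\nu$ be the least prime power with $\tilde f(p_0^\nu)=-1$ (it exists since $f(n_0)<0$ for some $n_0$). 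For every $n\le X/p_0^\nu$ with $p_0\nmid n$ and $\tilde f(n)\neq 0$, the pair $n$, $p_0^\nu n$ consists of one positive and one negative value of $\tilde f$, these pairs are pairwise disjoint as $n$ varies, and by the sieve (or simply by the Euler-product computation of $c$) a positive proportion of $n\le X/p_0^\nu$ are coprime to $p_0$ with $\tilde f(n)\neq 0$. This yields $\gg X$ positive and $\gg X$ negative values up to $X$, i.e.\ $(c+L)/2>0$ and $(c-L)/2>0$, hence $|L|<c$, and the remainder of your argument closes the proof without any appeal to Corollary~\ref{cor:signchanges}.
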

This is an optimal result, since on probabilistic grounds we expect that for any fixed $h > 0$ 
there is a positive proportion of intervals $[x, x + h]$ of length $h$ on which $f$ is of the same
sign. 
We also have the following analogue of Corollary \ref{cor:signchangsinints} for all intervals of length $\asymp \sqrt{x}$. %{\tt In the next Corollary I added the condition that $f$ is completely multiplicative function. It's a cheap way of getting out of the trouble with GCD conditions, which seems to still arise. Is it okay with you?} 
\begin{corollary} \label{cor:signchangesall} Let $f: \mathbb{N} \rightarrow \mathbb{R}$ be a completely multiplicative function. If
$f(n) < 0$ for some integer $n > 0$ and $f(n) \neq 0$ for a positive proportion of integers $n$, 
then there exists a constant $C > 0$ such that $f$ has a sign change in the interval $[x, x + C \sqrt{x}]$
for all large enough $x$.
\end{corollary}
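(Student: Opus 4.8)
The plan is to deduce Corollary \ref{cor:signchangesall} from Theorem \ref{thm:bilinear}, but applied to a cleverly chosen pair of completely multiplicative functions rather than to $f$ itself. Since $f$ is completely multiplicative and negative somewhere, there is a prime $p_0$ with $f(p_0) < 0$. I would introduce the two completely multiplicative functions $\tilde g, g : \mathbb{N} \to \{-1,0,1\}$ given on primes by $\tilde g(p) = \mathbf{1}_{f(p) \neq 0}$ and $g(p) = \mathrm{sgn}(f(p)) \cdot \mathbf{1}_{f(p) \neq 0}$; then $\tilde g(n) = \mathbf{1}_{f(n) \neq 0} = |g(n)|$ for all $n$, and $g(n) = \mathrm{sgn}(f(n))$ whenever $f(n) \neq 0$. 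The non-lacunarity hypothesis is precisely that $\sum_{f(p) = 0} 1/p < \infty$, so $\tilde g$ has a positive mean value $\alpha := \prod_{f(p) = 0}(1 - 1/p) > 0$ (also over dyadic blocks $[\sqrt{x}, 2\sqrt{x}]$) by the mean value theorem for non-negative multiplicative functions. Since $g$ is a real completely multiplicative function with $g(p_0) = -1$, standard mean-value results (Wirsing, Hal\'asz) show its mean value $L$ satisfies $|L| \leq \tfrac{p_0 - 1}{p_0 + 1}\,\alpha$, the improvement over $\alpha$ coming from the Euler factor at $p_0$ (and $L = 0$ outright when $\sum_{f(p) < 0} 1/p$ diverges). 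Thus $\kappa := \alpha^2 - L^2 > 0$ is a constant depending only on $f$.

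With $C = C(f) \geq 10$ a large constant to be chosen, suppose $f$ has no sign change in $I := [x, x + C\sqrt{x}]$ for some large $x$. Then there is $\varepsilon \in \{-1, +1\}$ with $g(n) = \varepsilon\,\tilde g(n)$ for every $n \in I$. By complete multiplicativity, for either $\phi \in \{g, \tilde g\}$,
\[
\sum_{\substack{x \leq n_1 n_2 \leq x + C\sqrt{x} \\ \sqrt{x} \leq n_1 \leq 2\sqrt{x}}} \phi(n_1)\phi(n_2) \;=\; \sum_{x \leq m \leq x + C\sqrt{x}} \phi(m)\, \#\{\, d \mid m : \sqrt{x} \leq d \leq 2\sqrt{x}\,\},
\]
so the relation $g = \varepsilon\,\tilde g$ on $I$ makes the bilinear sum for $g$ equal to $\varepsilon$ times the bilinear sum for $\tilde g$. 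Feeding this into Theorem \ref{thm:bilinear} applied in turn to $g$ and to $\tilde g$ gives
\[
M_g^2 + O\!\left(\tfrac{\log\log C}{\log C} + \tfrac{1}{(\log x)^{1/100}}\right) \;=\; \varepsilon\left(M_{\tilde g}^2 + O\!\left(\tfrac{\log\log C}{\log C} + \tfrac{1}{(\log x)^{1/100}}\right)\right),
\]
where $M_g$ and $M_{\tilde g}$ denote the means of $g$ and $\tilde g$ over $[\sqrt{x}, 2\sqrt{x}]$, so $M_g \to L$ and $M_{\tilde g} \to \alpha$ as $x \to \infty$. If $\varepsilon = -1$ this forces $M_g^2 + M_{\tilde g}^2 \to 0$, impossible since $M_{\tilde g}^2 \to \alpha^2 > 0$; if $\varepsilon = +1$ it forces $M_{\tilde g}^2 - M_g^2 \to 0$, contradicting $M_{\tilde g}^2 - M_g^2 \to \kappa > 0$ as soon as $C$ is taken large enough that $\log\log C / \log C < \kappa/100$ and then $x$ large enough. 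This contradiction establishes the corollary (with $C = C(f)$).

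I expect the one genuinely substantive point to be the strict mean-value gap $\kappa > 0$: one must verify, via the mean-value theorem for real multiplicative functions, that replacing a single Euler factor by $\tfrac{p_0-1}{p_0+1}$ (rather than $1$) strictly decreases the absolute value of the mean value below that of $|g| = \tilde g$, and do so with a bound uniform in $x$; the non-lacunarity of $f$ is used only to guarantee $\alpha > 0$. Everything else should be routine: checking $g, \tilde g$ are completely multiplicative and bounded so that Theorem \ref{thm:bilinear} applies, rewriting the bilinear sums as divisor sums, and noting that the error term $\log\log C/\log C + (\log x)^{-1/100}$ in Theorem \ref{thm:bilinear} can be pushed below $\kappa$ by choosing first $C$ and then $x$ large.
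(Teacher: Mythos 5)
Your proposal is correct, and it proves the statement by a genuinely different route from the paper. The paper works directly: after reducing to $f \in \{-1,0,1\}$, it forms $S^{\pm} = \frac{1}{h\sqrt{x}\log 2}\sum\bigl(|f(n_1)f(n_2)| \pm f(n_1)f(n_2)\bigr)$, applies Theorem~\ref{thm:bilinear} separately to $|f|$ and to $f$, factors $S^- = \bigl(\frac{1}{\sqrt{x}}\sum(|f|+f)\bigr)\bigl(\frac{1}{\sqrt{x}}\sum(|f|-f)\bigr) + O(\cdot)$, and then shows both factors are $\gg 1$ by the fundamental lemma of the sieve combined with the Lipschitz estimate of Lemma~\ref{le:Lipschitz} (the same argument already used in Corollary~\ref{cor:signchangsinints}). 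Positivity of $S^{\pm}$ then directly produces elements of each sign in $[x, x+C\sqrt{x}]$. You instead argue by contradiction: assuming no sign change forces $g = \varepsilon\,\tilde g$ on the interval, hence equality (up to sign) of the two bilinear sums via the identity $\phi(n_1)\phi(n_2)=\phi(n_1n_2)$, and you get the contradiction from the Wirsing/Delange-type explicit formula $L = \alpha\prod_{f(p)<0}\frac{p-1}{p+1}$ (or $L=0$), which gives the quantitative gap $\alpha^2 - L^2 > 0$. Both proofs have the same skeleton (two applications of Theorem~\ref{thm:bilinear}, one to the ``sign'' function and one to the ``support indicator''), but where the paper gets its positivity from a sieve estimate internal to its toolkit, you import the classical mean-value formula; your route is perhaps a bit more explicit about where the strict gap comes from, while the paper's is shorter given its existing lemmas. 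One small presentational point: for $\varepsilon=+1$ the contradiction is really $\kappa \leq O(\log\log C/\log C) + o_{x}(1)$ rather than ``$M_{\tilde g}^2 - M_g^2 \to 0$'', so $C$ must be fixed first (large in terms of $\kappa$, hence of $f$) and then $x$ taken large --- you do note this, but it is the only place where the order of quantifiers matters.
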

As a consequence of Corollary \ref{cor:signchangesall} there exists a constant $C > 0$, 
such that every interval $[n, n + C \sqrt{n}]$ has a number with an even number of prime
factors, and one with an odd number of prime factors. 

Our methods may also be used to demonstrate the existence of smooth numbers in almost all
short intervals. 
It is well-known that the number of $X^{1/u}$ smooth numbers up to $X$ is asymptotically
$\rho(u) X$ with $\rho(u)$ denoting the Dickman-De Brujin function \cite{Tenenbaum}. 
We show that this remains true in almost all short intervals, with the interval as short
as possible. 
\begin{corollary} \label{cor:smoothinshorts}
Let $\psi(x) \rightarrow \infty$ and let $u > 0$ be given. Then, for almost all $x$
the number of $x^{1/u}$-smooth integers in $[x, x + \psi(x)]$ is asymptotically
$\rho(u) \psi(x)$. 
\end{corollary}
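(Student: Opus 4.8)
The plan is to apply Theorem~\ref{thm:main} to the indicator function $\mathbf{1}_{\mathcal{S}(y)}$ of the $y$-smooth integers --- a completely multiplicative function with values in $\{0,1\}\subset[-1,1]$ --- taking the smoothness level $y$ to be a power of the dyadic scale. Decomposing the range of $x$ into dyadic blocks $[X,2X]$, and recalling that it suffices to prove, for each fixed $\varepsilon>0$, that the set of $x$ for which the number of $x^{1/u}$-smooth integers in $[x,x+\psi(x)]$ differs from $\rho(u)\psi(x)$ by more than $\varepsilon\psi(x)$ has density zero, I would first dispose of the dependence of the smoothness threshold on $x$: for $x\in[X,2X]$ one has $X^{1/u}\le x^{1/u}\le(2X)^{1/u}$, and $[x,x+\psi(x)]\subseteq[X,5X]$ (assuming, as one may, that $\psi$ is nondecreasing with $\psi(x)\le x$), so the number of $x^{1/u}$-smooth integers in any subinterval of $[X,5X]$ lies between the number of $X^{1/u}$-smooth and the number of $(5X)^{1/u}$-smooth integers there. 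It therefore suffices to work with the two fixed functions $f_-=\mathbf{1}_{\mathcal{S}(X^{1/u})}$ and $f_+=\mathbf{1}_{\mathcal{S}((5X)^{1/u})}$.

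Next I would compute the long averages. With $y\in\{X^{1/u},(5X)^{1/u}\}$ and $t$ any dyadic point in $[X,8X]$,
$$
\frac{1}{t}\sum_{t\le n\le 2t}\mathbf{1}_{\mathcal{S}(y)}(n)=\frac{S(2t,y)-S(t,y)}{t},
$$
and since $y=s^{1/v}$ with $v=\log s/\log y\to u$ as $X\to\infty$ for $s\in\{t,2t\}$, the classical asymptotic $S(s,s^{1/v})\sim\rho(v)s$ \cite{Tenenbaum} together with the continuity of $\rho$ shows that each long average tends to $\rho(u)$. Feeding this into Theorem~\ref{thm:main} with $h=H$ a (large) constant and a small parameter $\delta$, I obtain that for all but
$$
\ll X\Big(\frac{(\log H)^{1/3}}{\delta^{2}H^{\delta/25}}+\frac{1}{\delta^{2}(\log X)^{1/50}}\Big)
$$
integers $x$ in each of the dyadic blocks covering $[X,8X]$, the interval $[x,x+H]$ contains $\big(\rho(u)+O(\delta)+O(\tfrac{\log\log H}{\log H})+o(1)\big)H$ integers that are $y$-smooth --- hence, by the sandwich, $\rho(u)H(1+O(\varepsilon))$ integers that are $x^{1/u}$-smooth, once $\delta$ is small and then $H=H(\varepsilon)$ is large enough (using that $H^{\delta/25}$ eventually dominates every power of $\log H$, so the first error term above is $<\varepsilon X$). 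Call an integer $x'\in[X,5X]$ \emph{good} if $[x',x'+H]$ enjoys this property; then all but $O(\varepsilon X)$ of the integers of $[X,5X]$ are good.

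To pass from length $H$ to length $\psi(x)$ I would partition $[x,x+\psi(x)]$ into $\lfloor\psi(x)/H\rfloor$ consecutive blocks of length $H$ plus a remainder of length $<H\le\varepsilon\psi(x)$ (legitimate for all large $x$, since $\psi(x)\to\infty$). If all but an $\varepsilon$-fraction of these blocks start at a good integer, the total count is $\rho(u)\psi(x)(1+O(\varepsilon))$. The size of the set of $x\le Y$ where this fails is estimated by double counting: summing over $x$ the number of its length-$H$ blocks with non-good left endpoint, and swapping the order of summation, every non-good $x'\le 2Y$ is counted by at most $O(\psi(2Y)/H)$ pairs (here monotonicity of $\psi$ is used), while the number of non-good $x'\le 2Y$ is, by the previous step summed over dyadic blocks, $O(\varepsilon'Y)$, where $\varepsilon'$ is the threshold one is free to choose there. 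Taking $\varepsilon'$ small in terms of $\varepsilon$ and an auxiliary parameter, restricting to $x\in[Y/2,Y]$ (so the number of blocks is $\asymp\psi(Y)/H$), applying Markov's inequality, summing dyadically over $Y$, and finally letting the auxiliary parameter tend to $0$, shows that the exceptional set has density $0$, as required.

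I expect the main difficulty to be this last passage from bounded to length-$\psi(x)$ intervals carried out uniformly in $\psi$: when $\psi$ grows rapidly the double counting loses a factor $\psi(2Y)/\psi(Y/2)$, which forces one to replace dyadic blocks by sub-dyadic blocks $[Y',(1+\kappa)Y']$ on which $\psi$ is essentially constant (there are only $O(\log\psi(Y))$ scales at which $\psi$ changes by a constant factor). A secondary but essential point is the bookkeeping inside Theorem~\ref{thm:main}: to make the exceptional set genuinely $o(X)$ one must let $\delta\to0$ only slowly and fix $H$ large in terms of $\varepsilon$ \emph{before} sending $X\to\infty$, since the term $(\log H)^{1/3}/(\delta^{2}H^{\delta/25})$ is small only when $H$ is large relative to $\delta^{-1}$.
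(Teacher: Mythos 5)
Your proposal is correct and takes essentially the same approach as the paper, which disposes of this corollary in one line: ``Follows immediately from Theorem~\ref{thm:main} by taking $f$ to be the multiplicative function such that $f(p^\nu)=1$ for $p\le x^{1/u}$ and $f(p^\nu)=0$ otherwise.'' The routine reductions you spell out --- sandwiching the varying smoothness threshold between two $X$-dependent indicators, invoking the Dickman--de Bruijn asymptotic for the long average, and the double-counting passage from fixed length $H$ to length $\psi(x)$ (with sub-dyadic blocks to handle rapidly growing $\psi$) --- are exactly what the paper's ``immediately'' is suppressing, and they are handled correctly.
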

This improves on earlier work of Matom\"aki \cite{MatomakiSmooths2} and unpublished work of Hafner
\cite{Hafner}. It would be interesting, in view of applications towards the complexity of Lenstra's
elliptic curve factoring algorithm, to extend Corollary \ref{cor:smoothinshorts}
to significantly smoother numbers (and one would naturally need somewhat longer intervals $[x, x + \psi(x)]$ with
a $\psi(x)$ depending on the smoothness under consideration), 
even under the assumption of the Riemann Hypothesis. 

% We end this introduction by discussing extensions and limitations of our main result. Theorem \ref{thm:main} and its variants do not hold for complex valued multiplicative functions as for instance the example $f(p) = p^{it}$ shows. However, it is possible to extend them to complex valued multiplicative 
% functions in several ways: either 
% by modifying the shape of the main term (when $f$ is $p^{it}$ pretentious), by imposing additional
% conditions, such as $\sum_{X \leq n \leq 2X} f(n) n^{-1-it} \ll (\log x)^{-\delta}$ for some $\delta > 0$,
% uniformly in $|t| \leq x \log x$, or by only trying to relate the absolute value of the short sum to the absolute value
% of the long sum. % relating the absolute values
% %of the short sum to the absolute value of the long sum.  
% It is also interesting to notice that one cannot
% hope for instance sign change results for arbitrary multiplicative function $f: \mathbb{N} \rightarrow [-1,1]$ 
% in all intervals $[x, x + y(x)]$ of length $y(x) < \exp(((2 + o(1)) \log x \log\log x)^{1/2})$
% since every integer in such an interval can be divisible by a distinct prime factor. 

We end this introduction by discussing extensions and limitations of our main result. Theorem \ref{thm:main} and its variants do not hold for complex valued multiplicative functions as the example $f(p) = p^{it}$ shows. However, the result does extend to complex-valued functions which are not $n^{it}$-pretentious. We carried out this extension in \cite{tao} (joint with Terence Tao), where we used this complex variant, together with other ideas, to prove
an averaged version of Chowla's conjecture. 

It is also interesting to notice that one cannot hope to establish general results on sign changes of a multiplicative function 
$f: \mathbb{N} \rightarrow \mathbb{R}$ in all short intervals $[x, x + y(x)]$ with $y(x) < \exp(((2 + o(1)) \log x \log\log x)^{1/2})$. 
Indeed in an interval of this length every integer might be divisible by a distinct prime factor. Therefore one can rig the sign of the
multiplicative function on those primes so that $f(n)$ is always positive in $[x, x + y(x)]$ even though $f(n)$ has many sign changes in
the full interval $[x, 2x]$. 

In forthcoming work, the authors will investigate versions of our results for multiplicative functions vanishing on a positive proportion of the primes. This is naturally related to sieves of small dimensions. In addition we will also look at the related question of what happens when $|f(p)|$ is not bounded by $1$. In particular we will obtain results for the $k$-fold divisor function. 
In another forthcoming work, related to Theorem \ref{thm:bilinear} and joint with Andrew Granville and Adam Harper, we will try to understand individual averages of a multiplicative function $f$ in intervals of length $x^{\theta}$ with $\theta > 1/2$, and with $n$ restricted to smooth numbers (thus eliminating the contribution of large primes). 

\section{Initial reduction and key ideas}
\label{se:KeyIdeas}
We will deduce Theorem~\ref{thm:main} from a variant where $n$ is restricted to a dense subset $\mathcal{S}_X \subset [X, 2X]$ which contains only those $n$ which have prime divisors from certain convenient ranges. To define the set $\mathcal{S}$ we need to introduce some notation.
Let $\eta \in (0, 1/6)$. Consider a sequence of increasing intervals $[P_j, Q_j]$ such that 
\begin{itemize}
\item $Q_1 \leq \exp(\sqrt{\log X})$.
\item The intervals are not too far from each other, precisely
\begin{equation}
\label{eq:PjQjnottoofar}
\frac{\log\log Q_{j}}{\log P_{j-1}-1} \leq \frac{\eta}{4j^2}.
\end{equation}
\item The intervals are not too close to each other, precisely
\begin{equation}
\label{eq:PjQjnottooclose}
\frac{\eta}{j^2} \log P_{j} \geq 8 \log Q_{j-1} + 16 \log j 
\end{equation}
\end{itemize}
For example, given $0 < \eta < 1/6$ choose any $[P_1, Q_1]$ with $\exp(\sqrt{\log X}) \geq Q_1 \geq P_1 \geq (\log Q_1)^{40/\eta}$ large enough, and choose the remaining $[P_j, Q_j]$ as follows:
\begin{equation}
\label{eq:PjQjchoice}
P_j = \exp(j^{4j} (\log Q_1)^{j-1} \log P_1) \quad \text{and} \quad
Q_j = \exp(j^{4j + 2} (\log Q_1)^j).
\end{equation}
Let $\mathcal{S} = \mathcal{S}_X$ be a set of integers $X \leq n \leq 2X$
having at least one prime factor
in each of the intervals $[P_j, Q_j]$ for $j \leq J$, where $J$
is chosen to be the largest index $j$ such that $Q_j \leq \exp((\log X)^{1/2})$. 

%(we can assume that $h \leq e^{\sqrt{\log x}}$, so that $J \geq 1$ since for larger $h$ our Theorem \ref{thm:main} follows by combining shorter intervals).

Notice that, for any $j \leq J$, the number of integers in $[X, 2X]$ that do not have a prime factor from $[P_j, Q_j]$ is by a standard sieve bound of order $X \frac{\log P_j}{\log Q_j}$, which with the choice~\eqref{eq:PjQjchoice} is $X\frac{\log P_1}{j^2 \log Q_1}$. Hence once $Q_1$ is large enough in terms of $P_1$, most integers in $[X, 2X]$ belong to $\mathcal{S}$. It is also worth noticing that with the choice (\ref{eq:PjQjchoice}) a typical integer has about $\log \frac{\log Q_j}{\log P_j} = 2 \log j + \log \log Q_1 - \log\log P_1$ distinct prime factors in every fixed interval $[P_j, Q_j]$. 

We will establish the following variant of Theorem~\ref{thm:main} on the integers $n \in \mathcal{S}$.
\begin{theorem}
\label{th:ThminS}
Let $f : \mathbb{N} \rightarrow [-1,1]$ be a multiplicative function. Let $\mathcal{S} = \mathcal{S}_X$ be as above with $\eta \in (0, 1/6)$. If $[P_1, Q_1] \subset [1, h]$, then for all $X > X(\eta)$ large enough
$$
\frac{1}{X}\int_X^{2X} \left|\frac{1}{h} \sum_{\substack{x \leq n \leq x + h \\ n \in \mathcal{S}}} f(n) - \frac{1}{X} \sum_{\substack{X \leq n \leq 2X \\
n \in \mathcal{S}}} f(n)\right|^2 dx \ll \frac{(\log h)^{1/3}}{P_1^{1/6-\eta}} + \frac{1}{(\log X)^{1/50}}.
$$
\end{theorem}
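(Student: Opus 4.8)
The plan is to deduce Theorem~\ref{th:ThminS} by passing to Dirichlet polynomials, applying Parseval, and then exploiting the prime factor in each $[P_j,Q_j]$ to factor out a prime Dirichlet polynomial whose large values on a vertical line are rare. \textbf{Step 1 (Parseval).} Set $D(s) := \sum_{X \le n \le 2X,\ n \in \mathcal S} f(n) n^{-s}$. A Parseval-type identity for Dirichlet polynomials, relating the mean square of a short-interval sum to a weighted mean square of $D$ on a vertical line, reduces the left-hand side of Theorem~\ref{th:ThminS}, up to admissible errors, to bounding a quantity of the shape
\[
\int_{1/2}^{X/h} \bigl| D(1 + it) \bigr|^2 \, dt \; + \; \Bigl(\text{negligible tail for } |t| > X\Bigr),
\]
the frequencies $|t| \lesssim 1$ accounting for the long average that has been subtracted off. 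The hypothesis $[P_1, Q_1] \subseteq [1, h]$ enters here: it guarantees that removing from the short sum a prime of size at most $h$ leaves a residual sum over an interval of length $\ge 1$, and it is the reason $\log h$, rather than $\log X$, appears in the first error term.

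\textbf{Step 2 (factoring through $\mathcal S$).} Fix $j \le J$. Every $n \in \mathcal S$ has at least one prime factor in $[P_j, Q_j]$, and the number $\omega_{[P_j,Q_j]}(n)$ of such factors concentrates around its mean $T_j := \log\log Q_j - \log\log P_j$, with a Tur\'an--Kubilius-type second moment controlling the deviation. Replacing the indicator of $\mathcal S$ by $T_j^{-1}\sum_{p \,\|\, n,\ P_j \le p \le Q_j} 1$ --- it is essential to use this weighted count rather than a crude upper bound by the number of prime factors, so that the cancellation in $f$ is preserved --- one obtains, for each relevant range of $t$ and a suitable choice of $j = j(t)$,
\[
D(1 + it) \; = \; \frac{1}{T_j}\, \mathcal P_j(1+it)\, \mathcal R_j(1+it) \; + \; \bigl(\text{small in } L^2\bigr),
\qquad \mathcal P_j(s) := \sum_{P_j \le p \le Q_j} \frac{f(p)}{p^{s}},
\]
where $\mathcal R_j$ is a Dirichlet polynomial of length $\asymp X$ with coefficients of size $X^{o(1)}$. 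The reason for a whole \emph{sequence} of intervals is that a prime polynomial built from primes up to $Q_j$ only has few large values for $|t|$ up to roughly $\exp\bigl((\log Q_j)^{2}\bigr)$; since $Q_J \asymp \exp(\sqrt{\log X})$, this reaches $|t| \asymp X$, so letting $j$ grow with $|t|$ lets the factorization cover the entire Parseval range. The spacing conditions \eqref{eq:PjQjnottoofar} and \eqref{eq:PjQjnottooclose} are precisely what make the scales overlap while keeping the accumulated error, summed over $j$, under control.

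\textbf{Step 3 (mean values and large values).} For a dyadic block $[T, 2T]$ and the associated $j = j(T)$ it remains to bound $\int_T^{2T} |\mathcal P_j(1+it)|^2 |\mathcal R_j(1+it)|^2 \, dt$. Split $[T,2T]$ according to whether $|\mathcal P_j(1+it)|$ exceeds a threshold $V$ slightly above its root-mean-square size. On the complement, bound $|\mathcal P_j|^2 \le V^2$ and apply the Montgomery--Vaughan mean value theorem to $\int_T^{2T}|\mathcal R_j(1+it)|^2\,dt$; on the large-values set, bound its measure by a high-moment (Hal\'asz--Montgomery / Huxley-type) estimate for $\mathcal P_j$, which stays effective all the way up to $|t| \asymp X$ because $k$-th moments are available for $k$ as large as $\asymp \log X / \log Q_j$, and then pair this with the mean value theorem for $\mathcal R_j$ there. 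Optimising $V$ and $j$, the moderate range $|t| \le \exp(c\sqrt{\log X})$ --- whose worst case is the first scale, with $[P_1,Q_1]\subseteq[1,h]$ --- yields the power saving $(\log h)^{1/3} P_1^{-1/6+\eta}$, the exponent $1/6$ being the one that naturally comes out of these $L^2$ bounds; the complementary range, handled with $j = J$, contributes $(\log X)^{-1/50}$. Summing the dyadic contributions gives the stated estimate.

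\textbf{Main obstacle.} The heart of the matter is Step~3 intertwined with the multiscale bookkeeping of Step~2: one needs, for \emph{every} $t$ in the enormous range $[1, X/h]$, some scale $j$ at which the prime polynomial $\mathcal P_j$ genuinely oscillates, with exceptional sets small enough that, after summing over all $J \asymp \sqrt{\log X}/\log\log X$ scales, no power of $\log X$ is lost. Making the large-values estimates for the $\mathcal P_j$ interface cleanly with the mean value theorem for the long residual polynomials $\mathcal R_j$ --- and, at the top of the range, reaching $|t| \asymp X$ with prime polynomials of length only $\exp(\sqrt{\log X})$ --- is the technical core of the argument, and it is what dictates the precise shape of the spacing conditions \eqref{eq:PjQjnottoofar}--\eqref{eq:PjQjnottooclose}.
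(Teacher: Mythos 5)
Your high-level outline (Parseval, extract a prime polynomial $\mathcal P_j$ using the prime factor guaranteed by $n\in\mathcal S$, then large-values vs.\ mean-value splitting) matches the skeleton of the paper's argument, but there are two genuine gaps in Step 3 that are not bookkeeping issues but the technical heart of the proof.

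First, and most importantly, you propose to bound $\int_T^{2T}|\mathcal R_j(1+it)|^2\,dt$ by the mean value theorem on the complement of the large-values set of $\mathcal P_j$. But for $j\ge 2$ the residual polynomial $\mathcal R_j$ has length roughly $X/P_j$ (not $\asymp X$ as you assert), while the integration range extends up to $T\le X/h$. Once $P_j\gg h$, the mean value theorem $\int_{-T}^T|A(it)|^2\,dt\ll(T+N)\sum|a_n|^2$ gives $T/(X/P_j)\gg 1$, i.e.\ nothing at all. You cannot rescue this by optimizing $V$: the problem is structural, not a matter of thresholds. The paper resolves it by the following device (their Sections 8.2 and the ``moment computation'' lemma): when $t$ is in the $j$-th regime one knows that some short piece of the polynomial over $[P_{j-1},Q_{j-1}]$ is \emph{large}, say $\ge e^{-\alpha_{j-1}r/H_{j-1}}$, and one multiplies the integrand by $\bigl(|Q_{r,H_{j-1}}|e^{\alpha_{j-1}r/H_{j-1}}\bigr)^{2\ell}\ge 1$ for an $\ell$ chosen so that $Q_{r,H_{j-1}}^{\ell}\mathcal R_j$ has length $\asymp X$. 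This artificial lengthening makes the mean value theorem sharp again, at the modest cost of an $(\ell+1)!^2$ from the moment computation, and the spacing conditions \eqref{eq:PjQjnottoofar}--\eqref{eq:PjQjnottooclose} are engineered precisely so that $\ell$ stays of moderate size and this cost is absorbed by the prior saving $e^{-2\alpha_j v/H_j}$. Your proposal never introduces this step, and without it the argument does not close for any $j\ge2$.

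Second, the exceptional set $\mathcal U$ of $t$ at which no scale works is dismissed in one clause (``the complementary range, handled with $j=J$, contributes $(\log X)^{-1/50}$''), but this is where the paper introduces several new ideas: a sparse-set reduction to well-spaced points, a bespoke ``Hal\'asz inequality for primes'' (Lemma~\ref{le:Hallargevalprimes}, proved via duality and an explicit-formula computation in the zero-free region) which recovers a crucial extra logarithm compared to the standard large-value estimate, and a pointwise Hal\'asz-theorem bound on $R_{v,H}$ (Lemma~\ref{le:Halappl}) yielding $(\log X)^{-1/16}$. None of these appears in your proposal, yet they are exactly what produces the $(\log X)^{-1/50}$ error; a high-moment bound on $\mathcal P_j$ alone does not give it. Finally, a smaller point: your factorization $D(1+it)=T_j^{-1}\mathcal P_j\mathcal R_j+(\text{small in }L^2)$ uses a Tur\'an--Kubilius approximation with the expected number of prime factors $T_j$ in the denominator, whereas the paper uses Ramar\'e's exact identity with the weight $1/(\omega(m;P_j,Q_j)+1)$; the latter has no approximation error to estimate. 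Your version could likely be made to work at the cost of controlling the $L^2$ error, but you would need to check it does not swamp the main term; as written this is asserted rather than justified.
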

We show in Section \ref{sec:MTreduction} that for an appropriate choice of $\mathcal{S}$ almost all integers $n \in [X, 2X]$ belong to $\mathcal{S}$. It follows by taking $f(n) = 1$ in Theorem \ref{th:ThminS} that the same property holds in almost all short intervals. Combining this observation with Theorem \ref{th:ThminS}, and the assumption that $|f(n)| \leq 1$
implies Theorem \ref{thm:main}. % the following stronger version of Theorem \ref{thm:main}. 
%\begin{theorem} \label{thm:th1} 
%Let $f: \mathbb{N} \rightarrow [-1,1]$ be a multiplicative function. 
%There exists an absolute constant $C > 1$ such that
%for all $\delta > 0$, and $1 \leq h \leq X$, we have
%$$
%\Bigg | \frac{1}{h} \sum_{x \leq n \leq x + h} f(n) - \frac{1}{X} \sum_{X \leq n \leq 2X} f(n) \Bigg | \leq \delta
%$$
%for almost all $X \leq x \leq 2X$ with at most 
%\[
%CX\left(\frac{(\log h)^{1/3}}{\delta^2 h^{\delta/25}} + \frac{1}{\delta^2 (\log X)^{1/50}}\right)
%\] 
%exceptions.
%\end{theorem}
%One deduces Theorem \ref{thm:main} from Theorem \ref{thm:th1} by simply
%taking $\delta = (\log h)^{-1/200}$.

To prove Theorem~\ref{thm:bilinear} we will establish the following variant on the integers $n_1, n_2 \in \mathcal{S}$.
\begin{theorem}
\label{th:longints}
Let $f: \mathbb{N} \rightarrow [-1,1]$ be a multiplicative function. Let $\mathcal{S}$ be as above with $\eta \in (0, 1/6)$. If $[P_1, Q_1] \subset [1,h]$, then for all $x > x(\eta)$ large enough
\begin{align*}
\frac{1}{h \sqrt{x} \log 2}  \sum_{\substack{x \leq n_1 n_2 \leq x + h \sqrt{x} \\ \sqrt{x} \leq n_1 \leq 2 \sqrt{x} \\ n_1, n_2 \in \mathcal{S}}}
f(n_1) f(n_2) = \Big (\frac{1}{\sqrt{x}} \sum_{\substack{\sqrt{x} \leq n \leq 2 \sqrt{x} \\ n \in \mathcal{S}}} f(n) \Big )^{2} 
 + O \Big ( \frac{(\log Q_1)^{1/6}}{P_1^{1/12 - \eta/2}} + (\log X)^{-1/100} \Big ). 
\end{align*}
\end{theorem}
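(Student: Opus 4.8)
The plan is to run the proof of Theorem~\ref{th:ThminS} essentially verbatim, the one genuinely new ingredient being a single use of the Cauchy--Schwarz inequality — which is exactly what replaces the right‑hand side of Theorem~\ref{th:ThminS} by (roughly) its square root. Put $a(n)=f(n)\mathbf 1_{\mathcal S}(n)\mathbf 1_{[\sqrt x,\,2\sqrt x]}(n)$, $b(n)=f(n)\mathbf 1_{\mathcal S}(n)\mathbf 1_{[\sqrt x/2,\,\sqrt x+h]}(n)$, and $A(s)=\sum_n a(n)n^{-s}$, $B(s)=\sum_n b(n)n^{-s}$. First I would detect the condition $x\le n_1n_2\le x+h\sqrt x$ by a (smoothed) Perron formula; since every admissible $n_2$ lies in $[\sqrt x/2,\,\sqrt x+h]$, this gives, at a cost $O((\log x)^{-1/100})$ from the smoothing,
\[
\frac{1}{h\sqrt x\log 2}\sum_{\substack{x\le n_1n_2\le x+h\sqrt x\\ \sqrt x\le n_1\le 2\sqrt x\\ n_1,n_2\in\mathcal S}}f(n_1)f(n_2)
=\mathrm{MT}+O\!\left(\frac{1}{h\sqrt x}\int_{|t|\ge T_0}|W(t)|\,\bigl|A(1+it)\bigr|\,\bigl|B(1+it)\bigr|\,dt\right),
\]
where $W(t)=\tfrac{(x+h\sqrt x)^{1+it}-x^{1+it}}{1+it}$, so that $|W(t)|\asymp h\sqrt x$ for $|t|\le\sqrt x/h$ and $|W(t)|\ll x/|t|$ beyond, and $T_0$ is a small threshold. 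The term $\mathrm{MT}$ is the contribution of $|t|\le T_0$; exactly the computation that, in Theorem~\ref{th:ThminS}, extracts the long average from the low‑frequency part of the relevant Dirichlet polynomial shows $\mathrm{MT}=\bigl(\tfrac1{\sqrt x}\sum_{\sqrt x\le n\le 2\sqrt x,\,n\in\mathcal S}f(n)\bigr)^2+O((\log x)^{-1/100})$ — here, as there, one uses that a real‑valued multiplicative function is not $n^{it}$‑pretentious for $t\ne 0$.

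Next I would apply Cauchy--Schwarz in $t$. Bounding $|W(t)|$ by $h\sqrt x$ on $|t|\le\sqrt x/h$ and by $h\sqrt x\cdot\tfrac{\sqrt x}{h|t|}$ on $|t|>\sqrt x/h$, the error integral above is
\[
\ \ll\ \Bigl(\int_{T_0\le|t|\le\sqrt x}|A(1+it)|^2\,dt\Bigr)^{1/2}\Bigl(\int_{T_0\le|t|\le\sqrt x}|B(1+it)|^2\,dt\Bigr)^{1/2}\ +\ O\Bigl((\log x)^{-1/100}\Bigr),
\]
the last term absorbing the range $|t|>\sqrt x$, on which $A$ and $B$ behave like generic Dirichlet polynomials of length $\asymp\sqrt x$ and a trivial mean‑value bound suffices. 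Each of the two remaining factors is the mean square over $[T_0,\sqrt x]$ of an $\mathcal S$‑supported Dirichlet polynomial of length $\asymp\sqrt x$, which is precisely the quantity estimated in the proof of Theorem~\ref{th:ThminS} (with the scale $X$ there taken to be $\sqrt x$); that argument bounds each factor by $\ll\tfrac{(\log Q_1)^{1/3}}{P_1^{1/6-\eta}}+\tfrac{1}{(\log x)^{1/50}}$ (on re‑examination the factor $(\log h)^{1/3}$ in Theorem~\ref{th:ThminS} arises from a large‑values count over the single interval $[P_1,Q_1]$ and so may be taken to be $(\log Q_1)^{1/3}$). Multiplying the two square roots yields $\ll\tfrac{(\log Q_1)^{1/6}}{P_1^{1/12-\eta/2}}+\tfrac{1}{(\log x)^{1/100}}$, which is the claimed error term.

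All of the real work is therefore contained in the mean‑square bound $\int_{T_0}^{\sqrt x}|A(1+it)|^2\,dt\ll\cdots$ — i.e.\ in the proof of Theorem~\ref{th:ThminS} — and that is where I expect the main obstacle to lie. One decomposes $[T_0,\sqrt x]$ dyadically and, on each range $[T',2T']$, uses the defining property of $\mathcal S$: every $n$ counted by $A$ has a prime factor in each $[P_j,Q_j]$, so a Ramar\'e‑type identity writes $A(1+it)$ as a short combination of products $P_j(1+it)\,A^{(j)}(1+it)$ with $P_j(s)=\sum_{P_j\le p\le Q_j}f(p)p^{-s}$. If $|A(1+it)|$ is large, then $|P_j(1+it)|$ must be non‑negligible for \emph{every} $j\le J$, and — for all but a very small measure of $t$ — this contradicts either the large‑values/mean‑value estimates for the $P_j$ (effective when $T'$ is large) or a Hal\'asz‑type bound on $\tfrac1{2T'}\,\mathrm{meas}\{|t|\le T':|P_j(1+it)|>V\}$ (effective when $T'$ is small). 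Making these two inputs meet across the whole range $T_0\le|t|\le\sqrt x$, keeping everything uniform in $h$ and $f$, and exploiting that the spacing conditions \eqref{eq:PjQjnottoofar}--\eqref{eq:PjQjnottooclose} render the $J$ events ``$|P_j|$ large'' essentially independent, is the technical heart; it produces the exponents $1/6$ and $1/50$, which the Cauchy--Schwarz step turns into $1/12$ and $1/100$.
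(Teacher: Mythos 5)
Your proposal follows essentially the same route as the paper: smooth the indicator $x\le n_1n_2\le x+h\sqrt x$, detect it by Mellin/Perron, split $|t|\le T_0$ from $|t|>T_0$, apply Cauchy--Schwarz to the high-frequency part and feed both factors into the mean-square bound of Proposition~\ref{prop:MainProp} (with the scale $\sqrt x$ in place of $X$). Your observation that Proposition~\ref{prop:MainProp} already carries $(\log Q_1)^{1/3}$ rather than $(\log h)^{1/3}$ is correct, and the Cauchy--Schwarz step squares the saving as you say.

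The one place where your proposal is genuinely under-specified is the main-term extraction. You assert that the low-frequency Perron contribution $\mathrm{MT}$ directly equals $\bigl(\tfrac1{\sqrt x}\sum_{n\in\mathcal S}f(n)\bigr)^2+O(\cdot)$ by ``exactly the computation that, in Theorem~\ref{th:ThminS}, extracts the long average from the low-frequency part.'' But that is not how Theorem~\ref{th:ThminS} (via Lemma~\ref{lem:shortintsumtoDirpols}) works: there the low-frequency piece is \emph{never} shown to equal the long average; rather, one introduces a second, much longer scale $h_2 = x(\log x)^{-1/5}$, shows that the low-frequency Perron contributions at scales $h_1$ and $h_2$ agree, shows that the high-frequency contribution at scale $h_2$ is also small, and only then evaluates the full $h_2$-sum directly. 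The paper does the same here. It introduces $S_2$ with width $h_2 = x(\log x)^{-1/5}$, proves $\tfrac1{h_1}S_1 = \tfrac1{h_2}S_2 + O(\cdot)$ by comparing both the low- and high-frequency parts, and then evaluates $\tfrac1{h_2}S_2$ \emph{without} Perron: for each fixed $n_1$ the inner $n_2$-sum runs over an interval of length $\asymp\sqrt x/(\log x)^{1/5}$ around $\asymp\sqrt x$, so the Lipschitz estimate (Lemma~\ref{le:Lipschitz}, plus Lemma~\ref{le:Sinclexcl}) replaces it with $\tfrac{2}{\sqrt x}\sum_{n\in\mathcal S}f(n)$ up to $O((\log x)^{-1/20+o(1)})$; then a partial summation over $n_1$ and one more application of Lipschitz converts $\sum_{n_1\in\mathcal S}f(n_1)/n_1$ into $\log 2\cdot\tfrac1{\sqrt x}\sum_{n\in\mathcal S}f(n)$, producing the square with the constant $2\log 2$. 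This two-stage main-term evaluation (compare to a long scale, then use Lipschitz and partial summation at the long scale) is an essential step that your write-up compresses into one sentence; if you tried instead to read the main term off the $|t|\le T_0$ Perron integral directly, you would need an extra argument.

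A smaller point: your kernel $W(t)=\tfrac{(x+h\sqrt x)^{1+it}-x^{1+it}}{1+it}$ is the unsmoothed Perron weight, which only decays like $1/|t|$; the ensuing $\int|A||B|$ does not converge with the mere $L^2$ bounds available. One needs the triangular smoothing $\eta_{\xi,v}$ of the paper (or the doubling trick of Lemma~\ref{lem:shortintsumtoDirpols}) to get $1/|t|^2$ decay, and the smoothing width $\delta$ must then be optimized: the paper takes $\delta=\max\{(\log Q_1)^{1/6}P_1^{-1/12+\eta/2},(\log x)^{-1/100}\}$, balancing the $O(\delta)$ loss from shaving the endpoints against the $1/\delta$ loss in the tail estimate. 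You anticipate this in spirit (``at a cost $O((\log x)^{-1/100})$''), but the explicit $W(t)$ you wrote would not work as stated.
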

As before, upon specializing the set $\mathcal{S}$ and sieving, we can get rid of the requirement that $n_1, n_2 \in \mathcal{S}$, thus obtaining Theorem \ref{thm:bilinear}. While Theorem \ref{th:longints} is more complicated
than Theorem \ref{thm:bilinear}, it outperforms the latter in certain applications, such as for
example estimating the constant $C(\varepsilon)$ in Corollary \ref{cor:smooths}. Using Theorem \ref{th:longints}
gives $C(\varepsilon) = \rho(1/\varepsilon)^{-13}$ in Corollary \ref{cor:smooths}, for small fixed $\varepsilon$, 
while Theorem \ref{th:longints}
would only give estimates of the form $C(\varepsilon) = \exp(c/\rho(1/\varepsilon))$. In addition, by using a smoothing
in Theorem \ref{th:longints}, one could further reduce the estimate for $C(\varepsilon)$ to $\rho(1/\varepsilon)^{-7}$ 
for small fixed $\varepsilon$. Similarly using Theorem \ref{th:ThminS} instead of Theorem \ref{thm:main} allows us to give a better bound in Corollary \ref{cor:signchangsinints} for the exceptional set
$\mathcal{E} \subset[X, 2X]$ of those $x$'s for which $[x, x + h]$ has no sign change of $f$. Indeed we can show using Theorem \ref{th:ThminS} that 
$\mathcal{E}$ has measure $O_{\varepsilon} (X h^{-1/6 + \varepsilon} + (\log X)^{-1/50})$. 

\subsection{Outline of the proofs of Theorems \ref{th:ThminS} and \ref{th:longints}}
We now discuss the ideas behind the proofs of Theorems~\ref{th:ThminS} and \ref{th:longints}. 
In both cases the first step consists in reducing the problem essentially to showing that
\begin{equation} \label{mainthingtobound}
\int_{(\log X)^{1/15}}^{X/h} \left| \sum_{\substack{X \leq n \leq 2 X \\ n \in \mathcal{S}}} \frac{f(n)}{n^{1+it}}\right|^2 dt \ll \frac{(\log h)^{1/3}}{P_1^{1/6-\eta}} + \frac{1}{(\log X)^{1/50}}.
\end{equation}
The above bound is established in Proposition \ref{prop:MainProp} in Section~\ref{sec:mainProp}, and we will now sketch how to prove this bound.
We caution the reader that in the actual proof of Proposition \ref{prop:MainProp} we need to argue more carefully and 
in particular split most Dirichlet polynomials into much shorter ranges to avoid an accumulation of error terms.

We begin by splitting the range of integration $(\log X)^{1/15} \leq t \leq X / h$ into $J+1$ disjoint sets $\mathcal{T}_1, \ldots, \mathcal{T}_J, \mathcal{U}$ which are defined according to 
the sizes of the Dirichlet polynomials 
\begin{equation} \label{eq:dirpoly101}
\sum_{P_j \leq p \leq Q_j} \frac{f(p)}{p^{1+it}}.
\end{equation}
More precisely, we will define $\mathcal{T}_j$ as follows: $t \in \mathcal{T}_j$ if $j$ is the smallest index such that all appropriate subdivisions of \eqref{eq:dirpoly101}, i.e 
$$
\sum_{P \leq p \leq Q} \frac{f(p)}{p^{1 + it}} \text{ with } [P,Q] \subset [P_j, Q_j]
$$ are small (i.e with an appropriate power-saving). In practice the ``sub-divisions'' $[P,Q]$ will be 
narrow intervals covering $[P_j, Q_j]$. We will also define $\mathcal{U}$ as follows: $t \in \mathcal{U}$ if there does not exists a $j$ such that $t \in \mathcal{T}_j$. The set $\mathcal{U}$ is rather sparse (its measure is $O(T^{1/2 - \varepsilon})$) and therefore $t \in \mathcal{U}$ can be considered an exceptional case. 
The argument then splits into two distinct parts. 

The first is concerned with obtaining a saving for
\begin{equation} \label{eq:intgoal}
\int_{\mathcal{T}_j} \Bigg | \sum_{\substack{X \leq n \leq 2X \\ n \in \mathcal{S}}} \frac{f(n)}{n^{1 + it}} \Bigg |^2 dt
\end{equation}
for each $1 \leq j \leq J$,
and the second part of the argument is concerned with bounding
\begin{equation} \label{eq:intgoal2}
\int_{\mathcal{U}} \Bigg | \sum_{\substack{X \leq n \leq 2X \\ n \in \mathcal{S}}} \frac{f(n)}{n^{1 + it}} \Bigg |^2 dt.
\end{equation}
The smaller the length of the interval $h$ is, the more sets $\mathcal{T}_j$ we are required to work with, which
leads to an increasing complication of the proof. It is worth mentioning that for intervals of length $h = X^{\varepsilon}$ it
is enough to take $J = 1$ and most of the work consists in dealing with $\mathcal{U}$. In addition, in the special
case $h = X^{\varepsilon}$ and $f(n) = \mu(n)$ we do not even need to consider the integral over $\mathcal{U}$
and a very simple argument suffices. Both of the above remarks are explained in detail in our short note \cite{ShortNote}. 

When $t \in \mathcal{T}_j$ we use an analogue of Buchstab's identity (a variant of Ramar\'e's identity \cite[Section 17.3]{Opera}) to extract from the Dirichlet polynomial
$$
\sum_{\substack{X \leq n \leq 2X \\ n \in \mathcal{S}}} \frac{f(n)}{n^{1+it}}
$$
a Dirichlet polynomial over the primes in $[P_j, Q_j]$, which is known to be small (by our assumption that $t \in \mathcal{T}_j$). 
More precisely, for completely multiplicative $f(n)$ (the same ideas works for general multiplicative functions, but is more transparent in this case) we have
\begin{align}
\label{eq:Buchstab}
\sum_{\substack{X \leq n \leq 2 X \\ n \in \mathcal{S}}} \frac{f(n)}{n^{1+it}} =
\sum_{P_j \leq p \leq Q_j} \frac{f(p)}{p^{1+it}} & \sum_{\substack{X/p \leq m \leq 2X/p \\ m \in \mathcal{S}_j}} \frac{f(m)}{m^{1+it}} \cdot \frac{1}{\# \{P_j \leq q \leq Q_j: q | m \} + \mathbf{1}_{(p,m) = 1}}, 
\end{align}
where $\mathbf{1}_{(p,m) = 1}$ is\footnote{In the published version of this paper the term $\mathbf{1}_{(p,m) = 1}$ was incorrectly expressed as $1$ leading to a slight gap in the argument that affected only the proof of Lemma 12. This is corrected here. We thank Alisa Sedunova and Ke Wang for pointing out this issue to us.} the indicator function of $(p,m) = 1$, and $\mathcal{S}_j$ is the set of integers which have a prime factor from each interval $[P_i, Q_i]$ with $i \leq J$ 
except possibly not from $[P_j, Q_j]$. We then do some cosmetic operations: we dispose of the condition $X / p \leq m \leq 2X / p$ by splitting into short segments, and we replace $\mathbf{1}_{(p,m) = 1}$ by $1$ by noticing that for most $t$ the contribution of the terms with $p | m$ is negligible. The next step is to use a pointwise bound (which follows from the definition of $\mathcal{T}_j$) for the polynomial over $p \in [P_j, Q_j]$ and a mean value theorem for Dirichlet polynomials for the remaining polynomial over $m$ (by forgetting about the condition $t \in \mathcal{T}_j$ and extending the range of integration to $|t| \leq X / h$). This gives the desired saving in \eqref{eq:intgoal} when $j = 1$, but for $j > 1$ the length of the Dirichlet polynomial
\begin{equation} \label{eq:dirpoly102}
R_P(1+it) = \sum_{\substack{X/P \leq m \leq 2 X / P \\ m \in \mathcal{S}_j}} \frac{f(m)}{m^{1+it}} \cdot \frac{1}{\#\{P_j \leq p \leq Q_j : p | m\} + 1} \ , \ P \in [P_j, Q_j]
\end{equation}
is too short compared to the length of integration to produce a good bound. To get around this issue, we will
use the definition of $\mathcal{T}_j$, namely the assumption that there exists a narrow interval $[P, Q] \subset [P_{j-1}, Q_{j-1}]$ for which
$$
\sum_{P \leq p \leq Q} \frac{f(p)}{p^{1+it}}
$$
is large, say $\geq V$. This allows us to bound the mean-value of \eqref{eq:dirpoly102} by the mean-value of
\begin{equation} \label{eq:thedirpoly}
\Bigg ( V^{-1} \sum_{P \leq p \leq Q} \frac{f(p)}{p^{1 + it}} \Bigg )^{\ell} R_P(1+it)
\end{equation}
with an appropriate choice of $\ell$, making the length of the above Dirichlet polynomial close to 
$X / h$ (which is also the length of integration). While computing the moments, the conditions~\eqref{eq:PjQjnottoofar} and~\eqref{eq:PjQjnottooclose} on
$[P_j, Q_j]$ arise naturally: $Q_{j-1}$
needs to be comparatively small with respect to $P_j$ so that the length of the Dirichlet polynomial
\eqref{eq:thedirpoly} is necessarily close to $X/h$ for some choice of $\ell$. On the other hand $Q_{j-1}$ cannot be too small compared to $P_j$, so that we are not forced to choose too large $\ell$
which would increase too much the mean-value of \eqref{eq:thedirpoly}. 
Fortunately, it turns out that neither condition is very restrictive and there is a large
set of choices of $[P_j, Q_j]$ meeting both conditions. 

Let us now explain how one bounds the remaining integral \eqref{eq:intgoal2}. In this case we split the Dirichlet
polynomial
$$
\sum_{\substack{X \leq n \leq 2X \\ n \in \mathcal{S}}} \frac{f(n)}{n^{1+it}}
$$
into a Dirichlet polynomial whose coefficients are supported on the integers which have a prime factor in the range $\exp((\log X)^{1-1/48}) \leq p
\leq \exp(\log X / \log\log X)$, say, and a Dirichlet polynomial whose coefficients are supported on the integers which are co-prime to every prime in this range. 
%don't have a prime factor
%in this range. 
The coefficients of the second Dirichlet polynomial are supported on a set of smaller density, and applying the mean-value
theorem easily shows that we can ignore its contribution. To the first Dirichlet polynomial we apply the version of Buchstab's identity discussed before. In addition since $\mathcal{U}$ is a thin set (of size $O(T^{1/2 - \varepsilon})$) we can bound the integral by a sum of $O(T^{1/2 - \varepsilon})$ well-spaced points. Thus our problem reduces essentially to bounding
\begin{equation} \label{eq:Ssetbound}
(\log X)^{2 + \varepsilon} \sum_{t \in \mathcal{T}} |P(1 + it)M(1 + it)|^2 
\end{equation}
where $\mathcal{T}$ is a set of well-spaced points from $\mathcal{U}$, where $P(1 + it)$ is a polynomial whose coefficients are supported on the primes in a dyadic range, $M(1 + it)$ is the corresponding  Dirichlet polynomial
over the integers arising from Buchstab's identity, and the term $(\log X)^{2 + \varepsilon}$ comes from the loss incurred by ensuring that $P$ is in a dyadic interval. 

The Dirichlet
polynomial $|P(1 + it)|$ is small most of the time (in fact for $f = \mu$ it
is \textit{always} small for $|t| \leq X$), and on the set where it is small
we are done by simply bounding $P$ and 
applying Hal\'asz's large value estimate to sum $|M(1 + it)|^2$ over the well-spaced points $t \in \mathcal{T}$ (Hal\'asz's large values theorem is applicable since $|\mathcal{T}| \ll T^{1/2 - \varepsilon}$).
 On the
other hand taking moments we can show that $|P(1 + it)|$ is large extremely rarely (on a set of size $\exp((\log X)^{1/48+o(1)})$). We know in addition that $|M(1 + it)|^2$ is always $\ll (\log X)^{-\delta}$, for some small fixed $\delta > 0$, by Hal\'asz's theorem on multiplicative functions (since $f \in \mathbb{R}$ and
$|t| > (\log X)^{1/15}$ is bounded away from zero). Applying this pointwise bound to $|M(1 + it)|^2$ we are left with averaging $|P(1 + it)|^2$ over a very sparse set of points, and we need to save one logarithm compared to the standard application of Hal\'asz's large value estimate (which already regains one logarithm from the mean square of coefficients of $P$ since the coefficients are supported on primes in a dyadic interval). To do this, we derive a Hal\'asz type large value estimates for Dirichlet polynomials whose coefficients are supported on the primes. Altogether we regain the loss of $(\log x)^{2}$ and we win by $(\log x)^{-\delta + \varepsilon}$ which
followed from Hal\'asz's theorem on multiplicative functions. 
%{\tt I rewrote the last two sentences, but it's perhaps still not very good}
% , and we need to save two logarithms. 
% We derive a Halasz type large value estimates for Dirichlet polynomials supported on the primes. This result 
% always save one logarithm compared to the usual Halasz estimate. We regain an additional logarithm from the mean-square of coefficients of $P$, using that $P$ is supported on primes in a dy-adic interval. Combining everything together gives a bound of $(\log X)^{-\delta + \varepsilon}$ for (\ref{eq:Ssetbound}) as needed. 

Finally, 
we note that an iterative decomposition of Dirichlet polynomials is employed in a different way in two very recent papers on moments of $L$-functions (see \cite{RadziwillSoundararajan} and \cite{Harper}). 
%This suggests that this is a very general method.

\section{Hal\'asz theorem}

As explained above, in the proof we use Hal\'asz's theorem which says that unless a multiplicative function pretends to be $p^{it}$, it is small on average. Pretending is measured through the distance function
\[
\mathbb{D}(f, g;x)^2 = \sum_{p \leq x} \frac{1-\Re f(p) \overline{g(p)}}{p}
\]
which satisfies the triangle inequality
\[
\mathbb{D}(f, h; x) \leq \mathbb{D}(f, g; x) + \mathbb{D}(g, h; x)
\]
for any $f,g,h : \mathbb{N} \rightarrow \{z \in \mathbb{C}: |z| \leq 1\}$.

Upon noticing that $\mathbb{D}(fp^{-it}, p^{it_0};x) = \mathbb{D}(f, p^{it+it_0};x)$, the following lemma follows immediately from Hal\'asz's theorem (see for instance~\cite[Corollary 1]{GS03}) and partial summation.
\begin{lemma}
\label{le:Hal}
Let $f \colon \mathbb{N} \to [-1, 1]$ be a multiplicative function, and let
\[
F(s) = \sum_{x \leq n \leq 2x} \frac{f(n)}{n^{s}}.
\] 
and $T_0 \geq 1$. Let
\[
M(x, T_0) = \min_{|t_0| \leq T_0} \mathbb{D}(f, p^{it+it_0}; x)^2
\]
Then
\[
|F(\sigma + it)| \ll x^{1-\sigma}\left(M(x, T_0)\exp(-M(x, T_0)) + \frac{1}{T_0} + \frac{\log \log x}{\log x} \right)
\]
\end{lemma}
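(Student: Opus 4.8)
The plan is to deduce Lemma~\ref{le:Hal} from Hal\'asz's theorem in the sharp quantitative form of Granville--Soundararajan \cite[Corollary 1]{GS03}, applied not to $f$ itself but to the twisted function $n \mapsto f(n) n^{-it}$, and then to convert the resulting bound on a partial sum into a bound on the Dirichlet polynomial $F(\sigma+it)$ by partial summation. Writing $g(n) = f(n) n^{-it}$, the quantity $F(\sigma+it) = \sum_{x \le n \le 2x} f(n) n^{-\sigma - it} = \sum_{x \le n \le 2x} g(n) n^{-\sigma}$ is, up to the smooth weight $n^{-\sigma}$, a partial sum of $g$. Hal\'asz's theorem bounds $\frac1y \sum_{n \le y} g(n)$ in terms of $\min_{|t_0| \le T_0} \mathbb{D}(g, n^{it_0}; y)^2$; the key algebraic observation, already flagged in the excerpt, is that $\mathbb{D}(g, p^{it_0}; x) = \mathbb{D}(f p^{-it}, p^{it_0}; x) = \mathbb{D}(f, p^{i(t+t_0)}; x)$, so that $\min_{|t_0| \le T_0} \mathbb{D}(g, p^{it_0};x)^2$ is exactly $M(x,T_0)$ as defined in the statement (the shift by $t$ inside the infimum is harmless up to adjusting $T_0$, and one should be slightly careful that the two minima agree or differ only by an acceptable amount — see below).

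Concretely, I would first record the Granville--Soundararajan version of Hal\'asz: for a $1$-bounded multiplicative function $g$ and $y$ large,
\[
\frac{1}{y} \sum_{n \le y} g(n) \ll M \exp(-M) + \frac{1}{T_0} + \frac{\log\log y}{\log y},
\]
where $M = M(g, y, T_0) := \min_{|t_0| \le T_0} \mathbb{D}(g, n^{it_0}; y)^2$. Apply this with $g(n) = f(n)n^{-it}$ and $y$ ranging over $[x, 2x]$; since for such $y$ one has $\mathbb{D}(f, p^{is}; y) = \mathbb{D}(f, p^{is}; x) + O(1)$ only in a weak sense, it is cleaner to simply note that $\sum_{p \le y} (1 - \Re f(p)\overline{g(p)})/p$ changes by $O(\log\log(2x)/\log\log x) = O(1)$ — actually by $o(1)$ — as $y$ runs over $[x,2x]$, or better, absorb such discrepancies into the error term, so that $M(f p^{-it}, y, T_0) = M(x, T_0) + O(1)$ uniformly for $y \in [x,2x]$. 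Then $\sum_{n \le y} g(n) \ll y\bigl(M(x,T_0)\exp(-M(x,T_0)) + T_0^{-1} + \log\log x/\log x\bigr)$ uniformly for $y \in [x, 2x]$.

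The second step is partial summation: with $A(y) = \sum_{x \le n \le y} g(n)$ and the bound $|A(y)| \ll x \cdot E$ where $E := M(x,T_0)\exp(-M(x,T_0)) + T_0^{-1} + \log\log x/\log x$, write
\[
F(\sigma+it) = \sum_{x \le n \le 2x} \frac{g(n)}{n^{\sigma}} = \frac{A(2x)}{(2x)^{\sigma}} + \sigma \int_x^{2x} \frac{A(y)}{y^{\sigma+1}}\,dy.
\]
Bounding $|A(2x)| \ll xE$ gives $|A(2x)|(2x)^{-\sigma} \ll x^{1-\sigma} E$, and $|\sigma| \int_x^{2x} |A(y)| y^{-\sigma-1} dy \ll |\sigma| x E \int_x^{2x} y^{-\sigma-1} dy \ll x^{1-\sigma} E$ for $\sigma$ in the relevant range (the constant depending mildly on $\sigma$ staying bounded, e.g. $\sigma$ close to $1$ as in the applications, since $|\sigma|\int_x^{2x} y^{-\sigma-1}dy = |\sigma|(x^{-\sigma} - (2x)^{-\sigma})/\sigma = x^{-\sigma}(1 - 2^{-\sigma}) \ll x^{-\sigma}$). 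Combining the two contributions yields exactly
\[
|F(\sigma+it)| \ll x^{1-\sigma}\Bigl(M(x,T_0)\exp(-M(x,T_0)) + \frac{1}{T_0} + \frac{\log\log x}{\log x}\Bigr),
\]
as claimed.

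The only genuinely delicate point — and hence the ``main obstacle,'' though it is minor — is matching the distance functions and the ranges of minimization precisely: one must confirm that replacing the infimum over $|t_0| \le T_0$ of $\mathbb{D}(f, p^{i(t+t_0)};x)^2$ (which is what naturally appears after the twist) by the infimum over $|t_0| \le T_0$ of $\mathbb{D}(f, p^{it_0};x)^2$ as in the statement does not lose anything essential; this is fine because the Hal\'asz bound is monotone-ish in $M$ and the function $M\exp(-M)$ is bounded, so any bounded distortion of $M$ (from the shift by $t$, from varying $y \in [x,2x]$, or from the $O(1)$ slack in $\mathbb{D}$) is swallowed by the $\log\log x/\log x$ and $1/T_0$ error terms after adjusting implied constants — and in the paper's applications $t$ is itself the variable over which one eventually optimizes, so the shift is immaterial. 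Everything else is a direct quotation of \cite[Corollary 1]{GS03} followed by a one-line partial summation.
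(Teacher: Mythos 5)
Your proposal is correct and matches the paper's own (one-line) argument exactly: twist to $g = fp^{-it}$, apply \cite[Corollary 1]{GS03}, use the identity $\mathbb{D}(fp^{-it}, p^{it_0};x) = \mathbb{D}(f, p^{it+it_0};x)$, and finish with partial summation. The one ``delicate point'' you flag is a misreading of the statement --- the lemma already defines $M(x,T_0) = \min_{|t_0|\le T_0}\mathbb{D}(f,p^{it+it_0};x)^2$ with the shift $t$ built in, so $\min_{|t_0|\le T_0}\mathbb{D}(g,p^{it_0};x)^2$ is literally $M(x,T_0)$ and no adjustment of $T_0$ or absorption into error terms is needed.
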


The following lemma which is essentially due to Granville and Soundararajan is used to get a lower bound for the distance.
\begin{lemma}
\label{le:distest}
Let $f \colon \mathbb{N} \to [-1, 1]$ be a multiplicative function, and let $\varepsilon > 0$.
%  For $|\alpha| \leq 1$ one has
% \[
% \mathbb{D}(f, p^{i \alpha}; x) \geq \frac{1}{3} \mathbb{D}(1, f;x) + O(1) 
% \]
For any fixed $A$ and $1 \leq |\alpha| \leq x^A$,
\[
\mathbb{D}(f, p^{i \alpha}; x) \geq \left(\frac{1}{2\sqrt{3}}-\varepsilon\right) \sqrt{\log \log x} + O(1).
\]
\end{lemma}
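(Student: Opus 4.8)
The plan is to exploit that $f$ is real-valued in order to replace the lemma by a lower bound for $\mathbb{D}(1,p^{is};x)$ with $s=2\alpha$, and then to estimate the resulting sum over primes using the prime number theorem together with the Vinogradov--Korobov bound for $\zeta$ near the line $\Re s=1$; the numerical value $\tfrac1{2\sqrt3}$ will appear as $\sqrt{\tfrac14\cdot\tfrac13}$, with the $\tfrac13$ tracing back to the exponent $\tfrac23$ in Vinogradov--Korobov.

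First I would record the reduction. Since $f(p)\in[-1,1]$ one has $\Re(f(p)\overline{p^{i\alpha}})=f(p)\cos(\alpha\log p)=\Re(f(p)\overline{p^{-i\alpha}})$, so $\mathbb{D}(f,p^{i\alpha};x)=\mathbb{D}(f,p^{-i\alpha};x)$; feeding this into the triangle inequality
\[
\mathbb{D}(p^{i\alpha},p^{-i\alpha};x)\le \mathbb{D}(p^{i\alpha},f;x)+\mathbb{D}(f,p^{-i\alpha};x)
\]
and using $\Re(p^{i\alpha}\overline{p^{-i\alpha}})=\cos(2\alpha\log p)$ gives $\mathbb{D}(f,p^{i\alpha};x)\ge\tfrac12\,\mathbb{D}(1,p^{2i\alpha};x)$. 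So it suffices to show, for $s=2\alpha$ with $2\le|s|\le 2x^{A}$, that
\[
\mathbb{D}(1,p^{is};x)^{2}=\log\log x-\Re\sum_{p\le x}p^{-1-is}+O(1)\ \ge\ \Big(\tfrac13-\varepsilon\Big)\log\log x+O(1),
\]
equivalently that $\Re\sum_{p\le x}p^{-1-is}\le(\tfrac23+\varepsilon)\log\log x+O(1)$; the passage from this to the asserted bound on $\mathbb{D}(f,p^{i\alpha};x)$ is a routine square-rooting.

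For $|s|$ not too large — say $|s|\le\exp(c\sqrt{\log x})$ — I would deduce this from the zeta function: up to $O(1)$ one has $\sum_{p\le x}p^{-1-is}=\log\zeta\big(1+\tfrac1{\log x}+is\big)$, whence $\Re\sum_{p\le x}p^{-1-is}=\log\big|\zeta\big(1+\tfrac1{\log x}+is\big)\big|+O(1)$, and the Vinogradov--Korobov estimate $|\zeta(\sigma+it)|\ll(\log(2+|t|))^{2/3}$ for $\sigma\ge 1$ then yields $\Re\sum_{p\le x}p^{-1-is}\le\tfrac23\log\log(2+|s|)+O(\log\log\log(2+|s|))$, which is $\le(\tfrac23+\varepsilon)\log\log x+O(1)$ since $|s|\le x^{A}$. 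This settles the lemma in this range with room to spare, and it is exactly the exponent $2/3$ in Vinogradov--Korobov that produces the $1-\tfrac23=\tfrac13$ (the classical bound $|\zeta(1+it)|\ll\log(2+|t|)$ would only give the trivial $\mathbb{D}^2\gg 1$).

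The main obstacle is uniformity when $|\alpha|$, and hence $|s|$, is as large as a fixed power of $x$. There the identity $\sum_{p\le x}p^{-1-is}=\log\zeta\big(1+\tfrac1{\log x}+is\big)+O(1)$ fails: replacing the truncated sum by the full Euler product produces the tail $\sum_{p>x}p^{-1-1/\log x-is}$, and evaluating it by the prime number theorem — even with the Vinogradov--Korobov zero-free region — costs an error of size $|s|\exp(-c(\log x)^{3/5-\varepsilon})$, far larger than $\log\log x$ once $|s|$ is a power of $x$. To cover this range I would instead work with a cut-off $y\le x$ and split $\sum_{p\le x}=\sum_{p\le y}+\sum_{y<p\le x}$: bound $\sum_{p\le y}$ by the $\zeta$/Vinogradov--Korobov input on the largest scale $y$ for which the truncation is legitimate, and for $\sum_{y<p\le x}$ — which carries only $\log\log x-\log\log y$ of the mass and consists of primes whose logarithms are large multiples of $1/|s|$ — extract genuine cancellation, either from the same input on a suitable subrange or, for the most delicate stretch, from an exponential-sum bound for $\sum_{p\le x}p^{is}$. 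Arranging this so that the cumulative loss is only $\varepsilon\sqrt{\log\log x}$ is the technical heart of the argument; this is the part that is "essentially due to Granville and Soundararajan", and it is where I expect the real work to lie.
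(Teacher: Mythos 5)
Your reduction step is exactly the paper's: using that $f$ is real-valued, $\mathbb{D}(f,p^{i\alpha};x)=\mathbb{D}(f,p^{-i\alpha};x)$, and then the triangle inequality gives $\mathbb{D}(f,p^{i\alpha};x)\ge\tfrac12\mathbb{D}(1,p^{2i\alpha};x)$. The task is then to show $\mathbb{D}(1,p^{is};x)^2 \ge (\tfrac13-\varepsilon)\log\log x + O(1)$ for $2\le|s|\le 2x^A$, and both you and the paper get the $1/3$ from the Vinogradov--Korobov exponent $2/3$. Where you differ: the paper simply drops the primes $p < \exp((\log x)^{2/3+\varepsilon})$ from the nonnegative sum $\sum_p(1-\Re p^{-is})/p$, which leaves $(\tfrac13-\varepsilon)\log\log x+O(1)$ from $\sum 1/p$ minus the oscillatory piece $\bigl|\sum_{\exp((\log x)^{2/3+\varepsilon})\le p\le x}p^{-1-is}\bigr|$, and bounds that oscillatory piece by $O(1)$ directly from the VK zero-free region. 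You instead keep all primes and bound $\Re\sum_{p\le x}p^{-1-is}$ from above via $\log|\zeta(1+1/\log x+is)|$ and the pointwise bound $|\zeta(1+it)|\ll(\log|t|)^{2/3}(\log\log|t|)^{1/3}$. These are two packagings of the same input and both are fine; the paper's is slightly tidier because it never needs the $\log\zeta$ approximation at all.

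The part of your write-up that is actually wrong is the ``main obstacle.'' You claim the identity $\sum_{p\le x}p^{-1-is}=\log\zeta(1+\tfrac1{\log x}+is)+O(1)$ fails for $|s|$ of size a power of $x$ because the tail $\sum_{p>x}p^{-1-1/\log x-is}$ must be evaluated by the prime number theorem, incurring an error $|s|\exp(-c(\log x)^{3/5-\varepsilon})$. But that tail needs no cancellation and no PNT: since you are at $\sigma=1+1/\log x>1$, the trivial absolute-value bound gives $\bigl|\sum_{p>x}p^{-\sigma-is}\bigr|\le\sum_{p>x}p^{-\sigma}\ll\int_x^\infty u^{-\sigma}/\log u\,du=\int_1^\infty e^{-w}w^{-1}\,dw=O(1)$, uniformly in $s$. (Similarly, replacing $p^{-1-1/\log x}$ by $p^{-1}$ over $p\le x$ changes the sum by $\ll\sum_{p\le x}\frac{\log p}{p\log x}=O(1)$.) So the $\log\zeta$ identity and hence your ``easy range'' argument already hold uniformly for all $1\le|\alpha|\le x^A$, and the whole cut-off-and-exponential-sum machinery you sketch in your last paragraph is unnecessary. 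This is not a minor stylistic point: as written, your proposal presents an elaborate and unfinished patch for a difficulty that does not exist, while the paper's proof is four lines long precisely because there is no uniformity obstruction to work around.
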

\begin{proof}
% We reproduce the proof of the first claim from Granville and Soundararajan's forthcoming book \cite{PretentiousBook}. If $\mathbb{D}(1, p^{2 i \alpha}; x) \geq \tfrac 23 \mathbb{D}(1, f; x) + O(1)$ then the triangle inequality gives
% \begin{align*}
% \frac{2}{3} \mathbb{D} (1, f; x) + O(1) & \leq \mathbb{D}(1, p^{2 i \alpha}; x)
% = \mathbb{D}(p^{-i \alpha}, p^{i \alpha}; x) \\ & \leq \mathbb{D}(f, p^{i\alpha}; x)
% + \mathbb{D}(f, p^{-i\alpha}; x) = 2 \mathbb{D}(f, p^{i\alpha}; x).
% \end{align*}
% Otherwise, we notice that $\mathbb{D}(1, p^{2 i \alpha}; x) = \mathbb{D}(1, p^{i\alpha}; x) + O(1)$ for $|\alpha| \leq 1$, since $\mathbb{D}(1, p^{i\alpha}; x)^2 = \log (1 + |\alpha| \log x) + O(1)$. It follows from the triangle inequality that,
% $$
% \mathbb{D}(f, p^{i\alpha}; x) \geq \mathbb{D}(1, f; x) - \mathbb{D}(1, p^{i\alpha}; x) \geq \frac{1}{3} \mathbb{D}(1, f; x) + O(1). 
% $$

%To establish the second claim we notice that, 
By the triangle inequality
\[
2 \mathbb{D}(f, p^{i \alpha}; x) = \mathbb{D}(p^{-i \alpha}, f; x) + \mathbb{D}(f, p^{i \alpha}; x) \geq \mathbb{D}(p^{-i\alpha}, p^{i\alpha}; x) =  \mathbb{D}(1, p^{2i\alpha}; x).
\]
Furthermore
\[
\begin{split}
\mathbb{D}(1, p^{2i\alpha};x)^2 &= \sum_{p \leq x} \frac{1-\Re p^{-2i\alpha}}{p} \geq \sum_{\exp((\log x)^{2/3+\varepsilon}) \leq p \leq x} \frac{1-\Re p^{-2i\alpha}}{p} \\
&\geq \left(\frac{1}{3}-\varepsilon\right) \log \log x +O(1) - \left| \sum_{\exp((\log x)^{2/3+\varepsilon}) \leq p \leq x} \frac{1}{p^{1+2i\alpha}}\right| \\
&\geq \left(\frac{1}{3}-\varepsilon\right) \log \log x +O(1)
\end{split}
\]
by the zero-free region for the Riemann zeta-function.
\end{proof}

Actually we will need to apply Hal\'asz theorem to a function which is not quite multiplicative and the following lemma takes care of this application to a polynomial arising from the Buchstab type identity~\eqref{eq:Buchstab}.
\begin{lemma}
\label{le:Halappl}
Let $X \geq Q \geq P \geq 2$. Let $f(n)$ be a real-valued multiplicative function and
\[
R(s) = \sum_{\substack{X \leq n \leq 2X}} \frac{f(n)}{n^s} \cdot \frac{1}{\# \{p \in [P, Q] \colon p \mid n\} + 1}.
\]
Then for any $t \in [(\log X)^{1/16}, X^A]$,
\[
|R(1+it)| \ll  \frac{\log Q}{(\log X)^{1/16} \log P} + \log X \cdot \exp\left(-\frac{\log X}{3\log Q}\log \frac{\log X}{\log Q} \right).
\]
\end{lemma}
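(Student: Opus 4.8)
The plan is to replace the non-multiplicative weight $\bigl(\#\{p\in[P,Q]\colon p\mid n\}+1\bigr)^{-1}$ by a genuine multiplicative one via the identity $\frac{1}{k+1}=\int_0^1 y^{k}\,dy$ (for integers $k\ge0$), and then apply Hal\'asz's theorem uniformly over the resulting family. Putting $\omega_{[P,Q]}(n)=\#\{p\in[P,Q]\colon p\mid n\}$ and $g_y(n):=y^{\omega_{[P,Q]}(n)}$, one obtains
\[
R(1+it)=\int_0^1 F_y(1+it)\,dy,\qquad F_y(s):=\sum_{X\le n\le 2X}\frac{f(n)g_y(n)}{n^{s}}.
\]
The function $g_y$ is multiplicative (it equals $y$ at prime powers of primes in $[P,Q]$ and $1$ at all other prime powers), hence $fg_y$ is multiplicative and real-valued, with $|(fg_y)(n)|\le1$ for all $n$ whenever $y\in[0,1]$. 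Therefore Lemma~\ref{le:Hal} and Lemma~\ref{le:distest} both apply to $fg_y$, uniformly in $y\in[0,1]$.

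Fix $T_0:=\tfrac12(\log X)^{1/16}$. For $t\in[(\log X)^{1/16},X^{A}]$ and $|t_0|\le T_0$ the frequency $\alpha:=t+t_0$ satisfies $1\le\tfrac12(\log X)^{1/16}\le|\alpha|\le X^{A+1}$ once $X$ is large, so Lemma~\ref{le:distest}, applied to the real multiplicative function $fg_y$ with exponent $A+1$, gives $\mathbb{D}(fg_y,p^{i\alpha};X)^{2}\ge(\tfrac1{12}-\varepsilon)\log\log X+O(1)$; taking the minimum over $|t_0|\le T_0$, $M_y(X,T_0)\ge(\tfrac1{12}-\varepsilon)\log\log X+O(1)$ uniformly in $y\in[0,1]$. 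Feeding this into Lemma~\ref{le:Hal} at $\sigma=1$ (with $x=X$) gives, uniformly in $y\in[0,1]$,
\[
|F_y(1+it)|\ll M_y(X,T_0)\,e^{-M_y(X,T_0)}+\frac{1}{T_0}+\frac{\log\log X}{\log X}\ll (\log\log X)(\log X)^{-1/12+\varepsilon}+(\log X)^{-1/16}.
\]
For any $\varepsilon<\tfrac{1}{48}$ the first term is $\ll(\log X)^{-1/16}$, so $|F_y(1+it)|\ll(\log X)^{-1/16}$ uniformly in $y$; integrating, $|R(1+it)|\le\int_0^1|F_y(1+it)|\,dy\ll(\log X)^{-1/16}\le\frac{\log Q}{(\log X)^{1/16}\log P}$ (the last step because $\log Q\ge\log P$), which is at most the asserted bound since its second term is nonnegative.

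The single real constraint in this scheme is the size of $T_0$: it has to be large enough that $1/T_0\ll(\log X)^{-1/16}$, yet small enough that the shifted frequencies $t+t_0$ never drop below $1$ in absolute value, because Lemma~\ref{le:distest}---the assertion that a real multiplicative function cannot be $p^{i\alpha}$-pretentious---requires $|\alpha|\ge1$. This forces $T_0\asymp(\log X)^{1/16}$ and explains why the hypothesis restricts $t$ to $[(\log X)^{1/16},X^{A}]$. Finally I note that the integral identity appears to make the second error term in the statement unnecessary; that term is what one would instead have to produce along the alternative route in which a prime of $[P,Q]$ is peeled off $n$ through a Ramar\'e-type identity (bounding $\sum_{P\le p\le Q}p^{-1-it}$ trivially by $\asymp\log\frac{\log Q}{\log P}$ and estimating the cofactor polynomial by Lemma~\ref{le:Hal}), since the $Q$-smooth integers $n$ carry no prime of $[P,Q]$ to peel and would then have to be controlled by the trivial bound $\ll\Psi(2X,Q)/X$.
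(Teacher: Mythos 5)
Your proof is correct and takes a genuinely different---indeed cleaner---route than the paper's. The paper factors $n=n_1n_2$ with $n_1$ the part built from primes in $[P,Q]$ and $n_2$ coprime to them, applies Hal\'asz (Lemmas~\ref{le:Hal} and~\ref{le:distest}) to the inner $n_2$-polynomial, and bounds the outer $n_1$-sum crudely by $\prod_{P\le p\le Q}(1-1/p)^{-1}\ll\log Q/\log P$; the leftover range $n_1>X^{3/4}$, where the $n_2$-interval is too short for Hal\'asz, is controlled by counting $Q$-smooth numbers and produces the Dickman-type second term in the stated bound. Your device $(\#\{p\in[P,Q]:p\mid n\}+1)^{-1}=\int_0^1 y^{\#\{p\in[P,Q]:p\mid n\}}\,dy$ replaces the non-multiplicative weight by a family $f\cdot g_y$ that is genuinely multiplicative, real-valued, and bounded by $1$ for every $y\in[0,1]$, to which Lemmas~\ref{le:Hal} and~\ref{le:distest} apply directly and uniformly in $y$; this bypasses both the $\log Q/\log P$ loss and the smooth-number tail and yields the stronger clean conclusion $|R(1+it)|\ll(\log X)^{-1/16}$, of which the paper's bound is a weakening (since $\log Q\ge\log P$ and the second term is nonnegative). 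Your bookkeeping of $T_0$---large enough that $1/T_0\ll(\log X)^{-1/16}$ yet small enough that $|t+t_0|\ge 1$ so Lemma~\ref{le:distest} applies---is exactly right, as is your closing diagnosis of where the paper's extra terms originate. (One small remark: the hypothesis $|f|\le 1$, needed for both your argument and the paper's, is not written in the statement of the lemma but should be read in from the context of Proposition~\ref{prop:MainProp}, where it is always in force.)
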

\begin{proof}
Splitting $n = n_1 n_2$ where $n_1$ has all prime factors from $[P,Q]$ and $n_2$ has none, we get
\[
\begin{split}
|R(1+it)| &= \left|\sum_{\substack{n_1 \leq X^{3/4} \\ p \mid n_1 \implies p \in [P, Q]}} \frac{f(n_1)}{n_1^{1+it} (\omega(n_1)+1)} \sum_{\substack{X/n_1 \leq n_2 \leq 2X/n_1 \\ p \mid n_2 \implies p \not \in [P, Q]}} \frac{f(n_2)}{n_2^{1+it}}\right| \\
\qquad &+ O\left(\sum_{\substack{n_2 \leq X^{1/2} \\ p \mid n_2 \implies p \not \in [P, Q]}} \frac{1}{n_2} \sum_{\substack{X/n_2 \leq n_1 \leq 2X/n_2 \\ p \mid n_1 \implies p \in [P, Q]}} \frac{1}{n_1}\right) \\
&\ll \sum_{\substack{n_1 \leq X^{3/4} \\ p \mid n_1 \implies p \in [P,Q]}} \frac{1}{n_1} \left|\sum_{\substack{X/n_1 \leq n_2 \leq 2X/n_1 \\ p \mid n_2 \implies p \not \in [P, Q]}} \frac{f(n_2)}{n_2^{1+it}}\right| + \sum_{n_2 \leq X^{1/2}} \frac{1}{n_2} \sum_{\substack{X/n_2 \leq n_1 \leq 2X/n_2 \\ p \mid n_1 \implies p <  Q}} \frac{1}{n_1}
\end{split}
\]
By an estimate for the number of $Q$-smooth numbers, the second term is at most $O((\log X)^{-1} + \log X \exp(-\frac{\log X}{3\log Q} \log \frac{\log X}{\log Q} ))$. To the first term we apply Hal\'asz's theorem (Lemmas~\ref{le:Hal} and~\ref{le:distest}) to the sum over $n_2$ obtaining a saving of $(\log X)^{-1/16}$ and we bound the sum over $n_1$ by $\prod_{p \in [P, Q]} (1-1/p)^{-1} \ll\frac{\log Q}{\log P}$. Hence
\[
|R(1+it)| \ll  \frac{\log Q}{(\log X)^{1/16} \log P} + (\log X) \exp\left(-\frac{\log X}{3\log Q} \log \frac{\log X}{\log Q}\right).
\]
\end{proof}

We will also evaluate the average of $f(n)$ on intervals slightly shorter than $x$. For this we use the following Lipschitz type result due to Granville and Soundararajan. 
\begin{lemma}
\label{le:Lipschitz}
Let $f \colon \mathbb{N} \to [-1,1]$ be a multiplicative function. For any $x \in [X, 2X]$ and $X/(\log X)^{1/5} \leq y \leq X$, one has
\[
\frac{1}{y}\sum_{x \leq n \leq x + y} f(n) = \frac{1}{X} \sum_{X \leq n \leq 2X} f(n) + O\left(\frac{1}{(\log X)^{1/20}}\right).
\]
\end{lemma}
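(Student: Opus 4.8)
This is a short-interval stability (``Lipschitz'') estimate for means of real multiplicative functions, and up to the explicit exponents it is precisely the Lipschitz estimate of Granville and Soundararajan; the plan is to invoke it, or else to reproduce the short argument, which runs as follows.

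First I would reduce to the single inequality
\[
\frac1y\sum_{x\le n\le x+y}f(n)=\frac1x\sum_{n\le x}f(n)+O\big((\log X)^{-1/20}\big)\qquad\big(x\in[X,2X],\ X/(\log X)^{1/5}\le y\le X\big).
\]
Granting this, apply it once to $(x,y)$ and once to the pair $(X,X)$ — the latter gives $\frac1X\sum_{X\le n\le 2X}f(n)=\frac1X\sum_{n\le X}f(n)+O((\log X)^{-1/20})$ — so that it only remains to compare $\frac1x\sum_{n\le x}f(n)$ with $\frac1X\sum_{n\le X}f(n)$ for $x\in[X,2X]$; this is trivial (bound the difference by $O((x-X)/X)=O((\log X)^{-1/5})$) when $x-X<X/(\log X)^{1/5}$, and follows from the displayed inequality with base point $X$ and length $x-X$ otherwise.

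For the displayed inequality I would split according to how strongly $f$ pretends to be $1$, i.e.\ according to whether $\mathbb D(f,1;X)^2=\sum_{p\le X}(1-f(p))/p$ lies below or above a small fixed multiple of $\log\log X$. In the non-pretentious range, a short argument combining Lemma~\ref{le:distest} (for $1\le|t|\le x^A$), the triangle inequality $2\mathbb D(f,n^{it};x)\ge\mathbb D(1,n^{2it};x)$, the elementary estimate $\mathbb D(1,n^{i\theta};x)^2=\log(|\theta|\log x)+O(1)$ for $1/\log x\le|\theta|\le1$, and the assumed size of $\mathbb D(f,1;x)^2$ (for $|t|<1$) shows that $f$ is $n^{it}$-pretentious for no $|t|\le x^A$. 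Hence Lemma~\ref{le:Hal} with $T_0=\log X$ gives that every dyadic Dirichlet polynomial $\sum_{X'\le n\le 2X'}f(n)n^{-1-it}$ with $X'\le 2X$ is $\ll(\log X)^{-1/12+o(1)}$ uniformly for $|t|\le x^{A/2}$; feeding this into a smoothed Perron formula for $\sum_{x\le n\le x+y}f(n)$ and for $\sum_{n\le x}f(n)$ shows that both means are $\ll(\log X)^{-1/12+o(1)}$, which suffices since $\tfrac1{12}>\tfrac1{20}$. In the pretentious range $f$ genuinely pretends to $1$, and by the Wirsing/Halász theorem together with its short-interval analogue both means equal $\mathcal P(x)+O((\log X)^{-1/20})$, where $\mathcal P(x)=\prod_{p\le x}(1-1/p)\sum_{k\ge0}f(p^k)p^{-k}$; the only point to verify is that $\mathcal P$ is essentially constant on $[x,x+y]$, namely $\mathcal P(x+y)/\mathcal P(x)=1+O\big(\sum_{x<p\le x+y}1/p\big)=1+O(1/\log X)$ by Mertens, since primes in a multiplicatively short interval $[x,x+y]$ with $y\le x$ contribute only $O(1/\log x)$ to $\sum 1/p$.

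The step I expect to be the main obstacle is the short-interval estimate in the non-pretentious range: one must bound $\sum_{x\le n\le x+y}f(n)$ without paying the factor $x/y\le(\log X)^{1/5}$ coming from the length mismatch. In particular one cannot write $\sum_{x\le n\le x+y}f(n)=\sum_{n\le x+y}f(n)-\sum_{n\le x}f(n)$ and bound the two partial sums separately: absorbing the resulting loss of $x/y$ would require a saving of $(\log X)^{-1/4}$, whereas Halász only provides $(\log X)^{-1/12}$, so that splitting is genuinely fatal. Instead one treats the short sum as a single object, replacing the sharp indicator of $[x,x+y]$ by a smooth one whose Mellin transform on the line $\Re s=1$ is $\asymp 1$ for $|t|\lesssim x/y\le(\log X)^{1/5}$ and decays rapidly beyond; the non-trivial input is then the uniform Halász bound above, combined with the usual truncation estimates and with the trivial control of the contribution of $|t|$ near $0$. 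Keeping careful track of the dependence on $y$ here is exactly what the Granville–Soundararajan Lipschitz estimate does, which is why in practice I would simply quote it.
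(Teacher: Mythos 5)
Your proposal differs from the paper's proof in a way that is not merely cosmetic, and the central issue is your claim that ``splitting is genuinely fatal.'' The paper's proof is precisely the split you reject: it reduces to showing
\[
\Bigl|\frac{1}{X}\sum_{n \leq X} f(n) - \frac{1}{Y}\sum_{n \leq Y} f(n)\Bigr| \ll (\log X)^{-1/4}
\]
for $X/4 \leq Y \leq X$, which is the Granville--Soundararajan Lipschitz estimate for \emph{long} sums (derived from~\cite[Lemma 7.1, Theorem 4, Corollary 3]{GS03}), and then writes the short interval sum as $\sum_{n\le x+y}-\sum_{n\le x}$. The loss of $X/y \leq (\log X)^{1/5}$ is absorbed because the Lipschitz estimate gives a saving of $(\log X)^{-1/4}$, and $1/4 - 1/5 = 1/20$: this is exactly where the specific exponents $1/5$ and $1/20$ in the lemma's statement come from. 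You reason that the split costs $(\log X)^{1/5}$ while Hal\'asz only saves $(\log X)^{-1/12}$, but the Lipschitz bound between two long averages is a much stronger statement than independent Hal\'asz bounds on each --- it compares the two means to each other, not to zero --- and you have conflated the two. Because of this, you conclude that the GS Lipschitz estimate is ``about'' keeping track of $y$-dependence in a short sum, but in fact it is about comparing two truncations, and the short-interval version follows exactly by the subtraction you reject.

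There is a second gap in your sketch of a direct proof. You dichotomise on the size of $\mathbb{D}(f,1;X)^2$, whereas the correct dichotomy (and the one the paper uses) is on $\mathbb{D}(f, p^{it_f};X)^2$ where $t_f$ is the minimiser over $|t| \leq \log X$. Even though $f$ is real-valued, it is entirely possible to have $\mathbb{D}(f,1;X)^2 \gg \log\log X$ while $\mathbb{D}(f,p^{it_f};X)^2$ is small for some $t_f$ with, say, $(\log X)^{-1} < |t_f| < (\log X)^{-1/4}$; such $f$ lands in your ``non-pretentious'' bin, but $M(x,T_0)$ in Lemma~\ref{le:Hal} is then small and the Hal\'asz mean-value bound you invoke does not apply. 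Conversely, your ``pretentious'' case asserts both means equal $\mathcal{P}(x) + O(\cdot)$, but for $t_f \neq 0$ the long mean oscillates like $\Re(x^{it_f})$ rather than converging to an Euler product; you would need the full $t_f$-corrected form. Finally, the ``short-interval analogue'' of Wirsing/Hal\'asz you invoke in the pretentious case is not a standard result you can cite --- proving it for $y \sim X/(\log X)^{1/5}$ is essentially the lemma itself --- so as written that part of the argument is circular. The paper's route via the long-interval Lipschitz estimate sidesteps all of this and is substantially cleaner.
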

\begin{proof}
We shall show that, for any $X/4 \leq Y \leq X$,
\begin{equation}
\label{eq:Lipsch}
\left|\frac{1}{X} \sum_{n \leq X} f(n) - \frac{1}{Y} \sum_{n \leq Y} f(n)\right| \ll \frac{1}{(\log X)^{1/4}}
\end{equation}
from which the claim follows easily.

Let $t_f$ be the $t$ for which $\mathbb{D}(f, p^{it}; X)$ is minimal among $|t| \leq \log X$. Notice that if $\mathbb{D}(f, p^{it_f}; X)^2 \geq \frac{1}{3} \log \log X$, then \eqref{eq:Lipsch} follows immediately from Hal\'asz's theorem (Lemma~\ref{le:Hal}). This is in particular the case if $|t_f| \geq 1/100$, since in this case
\[
\begin{split}
\mathbb{D}(f, p^{it_f}; X)^2 &\geq \sum_{p \leq X} \frac{1-|\cos(t_f \log p)|}{p} \geq \left(1-\frac{1}{2\pi}\int_{0}^{2\pi} |\cos \alpha| d\alpha -o(1) \right) \log \log X \\
&= \left(1-\frac{2}{\pi}-o(1)\right)\log \log X
\end{split}
\]
by partial summation and the prime number theorem. 

Hence we can assume that $|t_f| \leq 1/100$ and $\mathbb{D}(f, p^{it_f}; X)^2 < \frac{1}{3} \log \log X$. By~\cite[Lemma 7.1 and Theorem 4]{GS03}, recalling that $f$ is real-valued,
\begin{equation}
\label{eq:Lipcons}
\begin{split}
&\left|\frac{1}{X} \sum_{n \leq X} f(n) - \left(\frac{X}{Y}\right)^{it_f}\cdot \frac{1}{Y} \sum_{n \leq Y} f(n)\right| \\
&= \left|\frac{1}{X^{1+it_f}} \sum_{n \leq X} f(n) - \frac{1}{Y^{1+it_f}} \sum_{n \leq Y} f(n)\right| \\ 
& = \left|\frac{1+it_f}{X} \sum_{n \leq X} \frac{f(n)}{n^{it_f}} - \frac{1+it_f}{Y} \sum_{n \leq Y} \frac{f(n)}{n^{it_f}} \right| + O\left(\frac{1}{\log X} \exp(\mathbb{D}(1, f; X)^2)\right) \\ 
&\ll \frac{1}{(\log X)^{1/4}}.
\end{split}
\end{equation}
For $|t_f| \leq 1/100$ we have $|(X/Y)^{it_f} - 1| \leq 1/2$, so that~\eqref{eq:Lipcons} implies
\[
\left|\frac{1}{X} \sum_{n \leq X} f(n) - \frac{1}{Y} \sum_{n \leq Y} f(n)\right| \leq \frac{1}{2} \cdot \frac{1}{Y} \sum_{n \leq Y} f(n) + O((\log X)^{-1/4}),
\]
which implies that either the left hand side is $O((\log X)^{-1/4})$ (i.e. \eqref{eq:Lipsch} holds)) or $\frac{1}{X} \sum_{n \leq X} f(n)$ and $\frac{1}{Y} \sum_{n \leq Y} f(n)$ have the same sign. In the latter case we notice that~\eqref{eq:Lipcons} implies also (see also~\cite[Corollary 3]{GS03})
\[
\left|\left|\frac{1}{X} \sum_{n \leq X} f(n)\right| - \left|\frac{1}{Y} \sum_{n \leq Y} f(n)\right|\right| \ll \frac{1}{(\log X)^{1/4}},
\]
and~\eqref{eq:Lipsch} follows, since the averages have the same sign, so that the inner absolute values can be removed.
\end{proof}

We will actually need to apply the previous two lemmas for sums with the additional restriction $n \in \mathcal{S}$ where $\mathcal{S}$ is as in Section~\ref{se:KeyIdeas}. This can be done through the following immediate consequence of the inclusion-exclusion principle.
\begin{lemma}
\label{le:Sinclexcl}
Let $\mathcal{S}$ be as in Section~\ref{se:KeyIdeas}. For $\mathcal{J} \subseteq \{1, \dotsc, J\}$, let $g$ be the completely multiplicative function
\[
g_{\mathcal{J}}(p^j) = 
\begin{cases}
1 & \text{if $p \not \in \bigcup_{j \in \mathcal{J}} [P_j, Q_j]$} \\
0 & \text{otherwise}.
\end{cases}
\]
Then
\[
\sum_{\substack{ X \leq n \leq 2X \\n \in \mathcal{S}}} a_n = \sum_{\substack{ X \leq n \leq 2X}} a_n \prod_{j=1}^J (1-g_{\{j\}}(n)) = \sum_{\mathcal{J} \subseteq \{1, \dotsc, J\}} (-1)^{\# \mathcal{J}} \sum_{X \leq n \leq 2X} g_{\mathcal{J}}(n) a_n.
\]
\end{lemma}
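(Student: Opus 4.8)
The final statement is Lemma~\ref{le:Sinclexcl}, which is an elementary inclusion-exclusion identity. Here is how I would prove it.

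\medskip

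The plan is to start from the middle expression and work outwards in both directions. First I would recall the definition of $\mathcal{S}$ from Section~\ref{se:KeyIdeas}: an integer $n \in [X,2X]$ lies in $\mathcal{S}$ precisely when, for every $j \leq J$, $n$ has at least one prime factor in the interval $[P_j,Q_j]$. Equivalently, $n \in \mathcal{S}$ if and only if for no $j \leq J$ is $n$ coprime to all primes in $[P_j,Q_j]$. In the notation of the lemma, $g_{\{j\}}(n) = 1$ exactly when $n$ has \emph{no} prime factor in $[P_j,Q_j]$ (since $g_{\{j\}}$ is completely multiplicative and vanishes on $p^k$ for $p \in [P_j,Q_j]$, while $g_{\{j\}}(p^k)=1$ for $p \notin [P_j,Q_j]$, so $g_{\{j\}}(n)=\prod_{p^a \| n} g_{\{j\}}(p^a)$ is $1$ iff no prime factor of $n$ lies in $[P_j,Q_j]$). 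Hence $1 - g_{\{j\}}(n)$ is the indicator that $n$ \emph{does} have a prime factor in $[P_j,Q_j]$, and the product $\prod_{j=1}^{J}\bigl(1 - g_{\{j\}}(n)\bigr)$ equals $1$ when $n \in \mathcal{S}$ and $0$ otherwise. This establishes the first equality, $\sum_{X \leq n \leq 2X,\, n \in \mathcal{S}} a_n = \sum_{X \leq n \leq 2X} a_n \prod_{j=1}^{J}\bigl(1 - g_{\{j\}}(n)\bigr)$.

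\medskip

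For the second equality I would expand the product $\prod_{j=1}^{J}\bigl(1 - g_{\{j\}}(n)\bigr)$ by distributing, obtaining $\sum_{\mathcal{J} \subseteq \{1,\dots,J\}} (-1)^{\#\mathcal{J}} \prod_{j \in \mathcal{J}} g_{\{j\}}(n)$. The one remaining point is that $\prod_{j \in \mathcal{J}} g_{\{j\}}(n) = g_{\mathcal{J}}(n)$. This follows because all the functions involved are completely multiplicative, so it suffices to check the identity on prime powers $p^k$: the left side is $\prod_{j \in \mathcal{J}} g_{\{j\}}(p^k)$, which is $0$ if $p \in [P_j,Q_j]$ for some $j \in \mathcal{J}$ (i.e. $p \in \bigcup_{j \in \mathcal{J}}[P_j,Q_j]$) and $1$ otherwise, and this matches the definition of $g_{\mathcal{J}}(p^k)$. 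Substituting back and interchanging the (finite) sum over $\mathcal{J}$ with the sum over $n$ gives $\sum_{\mathcal{J} \subseteq \{1,\dots,J\}} (-1)^{\#\mathcal{J}} \sum_{X \leq n \leq 2X} g_{\mathcal{J}}(n)\, a_n$, as claimed.

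\medskip

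There is no real obstacle here; the lemma is purely formal. The only thing that requires a moment's care is matching the combinatorial definition of $\mathcal{S}$ (``has a prime factor in each $[P_j,Q_j]$'') with the multiplicative sieve weight $\prod_j (1 - g_{\{j\}})$, and verifying that products of the completely multiplicative functions $g_{\{j\}}$ collapse to $g_{\mathcal{J}}$ — both of which reduce to checking an identity on prime powers. One should perhaps also remark that since $J = J(X)$ is finite for each $X$, all sums and products are finite and the interchange of summation is unconditionally valid. This lemma will then be applied with various choices of arithmetic weights $a_n$ (for instance $a_n = f(n)/n^{1+it}$ or $a_n = f(n)$) to transfer the earlier estimates of this section — Lemma~\ref{le:Hal}, Lemma~\ref{le:distest}, Lemma~\ref{le:Halappl} and Lemma~\ref{le:Lipschitz} — from plain sums over $[X,2X]$ to sums restricted to $\mathcal{S}$, at the cost of a sum over the $2^{J}$ subsets $\mathcal{J}$, which is harmless since $J \ll \log\log X$.
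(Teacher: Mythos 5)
Your proof is correct and is exactly the inclusion--exclusion argument the paper has in mind; the paper in fact states this lemma without proof, calling it an ``immediate consequence of the inclusion-exclusion principle,'' and your write-up supplies precisely the routine verification (that $\prod_j(1-g_{\{j\}})$ is the indicator of $\mathcal{S}$, that $\prod_{j\in\mathcal{J}} g_{\{j\}} = g_{\mathcal{J}}$, and the expansion of the product).
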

\section{Mean and large value theorems for Dirichlet polynomials}

Let us first collect some standard mean and large value results for Dirichlet polynomials.
\begin{lemma}
\label{le:contMVT}
Let $A(s) = \sum_{n \leq N} a_n n^{-s}$. Then
$$
\int_{-T}^T |A(it)|^2 dt = (T+O(N)) \sum_{n \leq N} |a_n|^2
$$
\end{lemma}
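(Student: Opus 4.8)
The plan is to expand the square and separate diagonal from off-diagonal terms. Writing $A(it)=\sum_{n\le N}a_n n^{-it}$, we have
$$
|A(it)|^2=\sum_{m,n\le N}a_m\overline{a_n}\Bigl(\tfrac{n}{m}\Bigr)^{it},
$$
so integrating over $[-T,T]$ the diagonal terms $m=n$ contribute $\bigl(\int_{-T}^{T}dt\bigr)\sum_{n\le N}|a_n|^2$, which is the main term of the asserted shape (the harmless numerical factor coming from the symmetric interval is absorbed, or one works with $\int_0^T$). It then remains to bound the off-diagonal contribution
$$
E:=\sum_{\substack{m,n\le N\\ m\ne n}}a_m\overline{a_n}\int_{-T}^{T}\Bigl(\tfrac{n}{m}\Bigr)^{it}dt=\sum_{\substack{m,n\le N\\ m\ne n}}a_m\overline{a_n}\cdot\frac{2\sin\!\bigl(T\log(n/m)\bigr)}{\log(n/m)}.
$$

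For the off-diagonal piece, the elementary bound $\bigl|\tfrac{2\sin(T\log(n/m))}{\log(n/m)}\bigr|\le \tfrac{2}{|\log(n/m)|}$, combined with $|a_m||a_n|\le\tfrac12(|a_m|^2+|a_n|^2)$ and $|\log(n/m)|\gg |n-m|/\max(m,n)$, only yields $E\ll N(\log N)\sum_{n\le N}|a_n|^2$, which loses a logarithm. To obtain the sharp bound $E\ll N\sum_{n\le N}|a_n|^2$ I would instead invoke the Montgomery--Vaughan form of Hilbert's inequality: for complex $x_n$ and real $\lambda_n$ with $\delta_n:=\min_{m\ne n}|\lambda_m-\lambda_n|>0$, one has $\bigl|\sum_{m\ne n}x_m\overline{x_n}/(\lambda_m-\lambda_n)\bigr|\le \pi\sum_n|x_n|^2/\delta_n$. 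Applying this with $\lambda_n=\log n$, $x_n=a_n$, and using $\delta_n\ge\log\frac{n+1}{n}\gg 1/n$, gives $E\ll\sum_{n\le N}n|a_n|^2\ll N\sum_{n\le N}|a_n|^2$, precisely the error term claimed.

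The only genuine difficulty is the passage from the trivial $O(N\log N)$ estimate to the sharp $O(N)$ one; everything else is bookkeeping. An alternative route I might present is to majorize $\mathbf{1}_{[-T,T]}$ by a Fej\'er-type kernel whose Fourier transform is supported in a bounded interval and is a constant multiple of $T$ near the origin, thereby reducing the off-diagonal sum to pairs with $|\log(n/m)|\ll 1/T$ together with a controllable tail; but the Hilbert-inequality argument above is the cleanest and is the standard one (cf.\ Montgomery--Vaughan or Iwaniec--Kowalski).
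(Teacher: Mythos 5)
The paper does not give its own proof of this lemma; it simply cites \cite[Theorem~9.1]{IwKo04}, whose proof is exactly the diagonal/off-diagonal expansion plus the Montgomery--Vaughan Hilbert-type inequality that you describe. So your argument is the ``paper's proof,'' unfolded.

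One small but real gap in your write-up: after integrating, the off-diagonal kernel is $\dfrac{2\sin\bigl(T\log(n/m)\bigr)}{\log(n/m)}$, not $\dfrac{1}{\log(n/m)}$, so the Montgomery--Vaughan inequality cannot be applied to $E$ in the form you quote (the coefficients there must be independent of the pair $(m,n)$, but the sine factor couples them). The standard fix is to write
$$
\frac{2\sin(T\theta)}{\theta}=\frac{e^{iT\theta}-e^{-iT\theta}}{i\theta},\qquad \theta=\log(n/m)=\lambda_n-\lambda_m,
$$
which splits $E$ into two sums of the shape $\sum_{m\ne n} x_m\overline{x_n}/(\lambda_m-\lambda_n)$ with the twisted coefficients $x_n=a_n n^{\pm iT}$ (note $|x_n|=|a_n|$). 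Applying Montgomery--Vaughan to each, with $\lambda_n=\log n$ and $\delta_n\gg 1/n$, then gives $E\ll\sum_n n|a_n|^2\ll N\sum_n|a_n|^2$ as you want. This is a one-line repair, not a change of method. You also correctly noted the cosmetic $T$-versus-$2T$ discrepancy (the paper integrates over $[-T,T]$ but writes the main term as $T$, matching the $[0,T]$ normalization in Iwaniec--Kowalski); this is immaterial since the statement is only ever used up to constants.
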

\begin{proof}
See \cite[Theorem 9.1]{IwKo04}.
\end{proof}

For the rest of the paper we say that $\mathcal{T} \subseteq \mathbb{R}$ is well-spaced if $|t-r| \geq 1$ for all distinct $t, r \in \mathcal{T}$.

\begin{lemma}
\label{le:discMVT}
Let $A(s) = \sum_{n \leq N} a_n n^{-s}$, and let $\mathcal{T} \subset [-T, T]$ be a sequence of well-spaced points. Then
$$
\sum_{t \in \mathcal{T}} |A(it)|^2 \ll (T+N) \log 2N \sum_{n \leq N} |a_n|^2
$$
\end{lemma}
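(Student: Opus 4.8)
The plan is to deduce this discrete mean value estimate from the continuous one, Lemma~\ref{le:contMVT}, by means of Gallagher's lemma: for any continuously differentiable $S \colon \mathbb{R} \to \mathbb{C}$ and any $1$-separated set $\mathcal{T} \subset [-T, T]$,
\[
\sum_{t \in \mathcal{T}} |S(t)|^2 \ll \int_{-T-1}^{T+1} |S(t)|^2 \, dt + \left( \int_{-T-1}^{T+1} |S(t)|^2 \, dt \right)^{1/2} \left( \int_{-T-1}^{T+1} |S'(t)|^2 \, dt \right)^{1/2}.
\]
This is itself elementary: writing $I_t = [t - \tfrac12, t + \tfrac12]$ one has, for $t \in \mathcal{T}$, the pointwise bound $|S(t)|^2 \leq \int_{I_t} |S(u)|^2 \, du + 2 \int_{I_t} |S(u)| \, |S'(u)| \, du$, obtained by averaging the identity $|S(t)|^2 = |S(u)|^2 + \int_u^t 2\,\Re\big(\overline{S(v)}\,S'(v)\big) \, dv$ over $u \in I_t$; summing over $t \in \mathcal{T}$, whose defining intervals $I_t$ are pairwise disjoint and contained in $[-T-1, T+1]$, and finishing with Cauchy--Schwarz gives the displayed inequality. (It can also simply be quoted from \cite{IwKo04}.)

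First I would apply this with $S(t) = A(it) = \sum_{n \leq N} a_n n^{-it}$, so that $S'(t) = -i \sum_{n \leq N} (a_n \log n) n^{-it}$ is again a Dirichlet polynomial of length $N$, now with coefficients $b_n = -i a_n \log n$ satisfying $|b_n| \leq (\log 2N)|a_n|$. Applying Lemma~\ref{le:contMVT} on the interval $[-(T+1), T+1]$ --- note that $(T+1) + O(N) \ll T + N$ since $N \geq 1$ --- to both $S$ and $S'$ yields
\[
\int_{-T-1}^{T+1} |S(t)|^2 \, dt \ll (T+N) \sum_{n \leq N} |a_n|^2, \qquad \int_{-T-1}^{T+1} |S'(t)|^2 \, dt \ll (T+N)(\log 2N)^2 \sum_{n \leq N} |a_n|^2.
\]

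Finally I would substitute these two bounds into Gallagher's inequality: the first term contributes $(T+N) \sum_{n \leq N} |a_n|^2$, and the cross term contributes
\[
\Big( (T+N) \sum_{n \leq N} |a_n|^2 \Big)^{1/2} \Big( (T+N)(\log 2N)^2 \sum_{n \leq N} |a_n|^2 \Big)^{1/2} = (T+N)(\log 2N) \sum_{n \leq N} |a_n|^2,
\]
so the total is $\ll (T+N) \log 2N \sum_{n \leq N} |a_n|^2$, as claimed. I do not anticipate a genuine obstacle here; the only points needing a little care are the harmless enlargement of the range from $[-T,T]$ to $[-T-1, T+1]$ (absorbed into the $O(N)$ term in Lemma~\ref{le:contMVT}) and the fact that one must use the cross-term form of Gallagher's lemma rather than the cruder $\sum_{t \in \mathcal{T}} |S(t)|^2 \ll \int |S|^2 + \int |S'|^2$, which would only give $(\log 2N)^2$. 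One could instead try to invoke the Montgomery--Vaughan large sieve directly, but since consecutive frequencies $\log n$ can be as close as $\asymp 1/N$ this loses a factor and does not cleanly recover the stated bound, so Gallagher's lemma is the right tool.
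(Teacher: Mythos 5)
Your proof is correct; the paper itself gives no argument here and simply cites Theorem 9.4 of Iwaniec--Kowalski, whose proof proceeds along exactly the lines you describe. Your derivation --- Gallagher's Sobolev-type inequality combined with the continuous mean value theorem (Lemma~\ref{le:contMVT}), applied to both $A(it)$ and its derivative, and the cross-term form of Cauchy--Schwarz to get a single factor of $\log 2N$ rather than two --- is the standard passage from the continuous to the discrete mean value estimate, and all the details (the enlargement of the integration range to $[-T-1,T+1]$, the bound $|a_n \log n| \leq (\log 2N)|a_n|$, the disjointness of the unit intervals around well-spaced points) are handled correctly.
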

\begin{proof}
See \cite[Theorem 9.4]{IwKo04}.
\end{proof}

\begin{lemma}
\label{le:Rupest}
Let 
\[
P(s) = \sum_{P \leq p \leq 2P} \frac{a_p}{p^s} \quad \text{with $|a_p| \leq 1$.}
\]
Let $\mathcal{T} \subset [-T, T]$ be a sequence of well-spaced points such that $|P(1 + it)| \geq V^{-1}$ for every $t \in \mathcal{T}$. Then
\[
|\mathcal{T}| \ll T^{2 \frac{\log V}{\log P}} V^2 \exp\left(2 \frac{\log T}{\log P} \log \log T\right).
\]
\end{lemma}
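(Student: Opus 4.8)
The plan is to bound $|\mathcal{T}|$ by extracting a high moment of $P(1+it)$ and applying the mean value theorem for Dirichlet polynomials (Lemma \ref{le:discMVT}). Fix an integer $k \geq 1$ to be chosen later. Since $|P(1+it)| \geq V^{-1}$ for every $t \in \mathcal{T}$, we have
\[
|\mathcal{T}| \leq V^{2k} \sum_{t \in \mathcal{T}} |P(1+it)|^{2k}.
\]
Now $P(1+it)^k$ is itself a Dirichlet polynomial, supported on integers $n$ that are products of exactly $k$ primes from $[P, 2P]$, hence supported on $n \leq (2P)^k =: N$, with coefficients $b_n = \sum_{p_1 \cdots p_k = n} a_{p_1} \cdots a_{p_k} / (p_1 \cdots p_k)^{it}$... wait, more carefully, writing $P(1+it)^k = \sum_{n} b_n n^{-it}$ where $b_n = n^{-1}\sum_{p_1\cdots p_k = n,\, p_i \in [P,2P]} a_{p_1}\cdots a_{p_k}$, the number of representations of a given $n$ as an ordered product of $k$ primes is at most $k!$, so $|b_n| \leq k! \, n^{-1} \leq k!\, P^{-k}$ on the support, and $\sum_n |b_n|^2 \leq k! \, P^{-2k} \cdot \#\{\text{such } n\} \cdot (\text{multiplicity})$. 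More cleanly: $\sum_n |b_n|^2 \leq \sum_n |b_n| \cdot \max|b_n|$, and $\sum_n |b_n| \leq P(1+1)^k \cdot (\text{something}) $ — the cleanest route is $\sum_n |b_n|^2 \leq k!\, \bigl(\sum_{P \leq p \leq 2P} p^{-2}\bigr)^k \ll k!\, (1/(P\log P))^k$ by grouping the $k$-fold convolution and using that each prime appears with a bounded power.

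Applying Lemma \ref{le:discMVT} to $A(s) = P(s)^k$ with $N = (2P)^k$ gives
\[
\sum_{t \in \mathcal{T}} |P(1+it)|^{2k} \ll (T + N)\, \log(2N) \sum_n |b_n|^2 \ll (T + (2P)^k)\, k \log(2P)\, \frac{k!}{(P \log P)^k}.
\]
Choosing $k = \lceil \log T / \log P \rceil$ so that $(2P)^k \asymp T \cdot 2^k$ (absorbing the $2^k$ into the error exponent), the bracket is $\ll T \cdot 2^k$, and combining with the trivial bound $k! \ll k^k$ one gets $\sum_{t \in \mathcal{T}} |P(1+it)|^{2k} \ll T \exp(O(k \log k)) / (\log P)^k \cdot (\text{lower order})$. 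Since $k \log k \ll (\log T / \log P) \log\log T$, this yields
\[
|\mathcal{T}| \ll V^{2k} \cdot T \cdot \exp\!\left(O\!\left(\tfrac{\log T}{\log P}\log\log T\right)\right) = T \cdot V^2 \cdot V^{2(k-1)} \exp\!\left(O\!\left(\tfrac{\log T}{\log P}\log\log T\right)\right),
\]
and since $V^{2(k-1)} \leq V^{2\log T/\log P} = T^{2\log V / \log P}$, we obtain the claimed bound $|\mathcal{T}| \ll T^{2\log V/\log P} V^2 \exp(2\frac{\log T}{\log P}\log\log T)$ after adjusting constants.

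The main obstacle I anticipate is bookkeeping the constants and lower-order factors so that they genuinely fit inside the stated exponents: in particular controlling $\sum_n |b_n|^2$ for the $k$-fold product (the factor $k!$ from the ordering of primes, and ensuring the $2^k$ from $N = (2P)^k$ versus $T$ and the $\log(2N) \asymp k\log P$ factor from Lemma \ref{le:discMVT} are all absorbed into the $\exp(2\frac{\log T}{\log P}\log\log T)$ term rather than inflating the power of $T$ or of $V$). One must also handle the edge case $\log T \leq \log P$ (i.e.\ $k = 1$), where the estimate follows directly from Lemma \ref{le:discMVT} without taking moments. If the constant $2$ in either exponent turns out slightly too tight, one has freedom in the choice of $k$ (e.g.\ $k = \lceil (1+\epsilon)\log T/\log P\rceil$) to trade between the two terms, but the stated form should come out of the direct choice.
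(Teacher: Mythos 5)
Your proposal is essentially the same as the paper's proof: raise $P(1+it)$ to the $k$-th power with $k=\lceil\log T/\log P\rceil$, use $|\mathcal{T}|\le V^{2k}\sum_{t\in\mathcal{T}}|P(1+it)|^{2k}$, apply the discrete mean value theorem (Lemma \ref{le:discMVT}) to the Dirichlet polynomial $P(s)^k$, and bound the sum of squared coefficients by $\ll k!\,(P\log P)^{-k}$ via the $k!$ bound on ordered factorizations. The paper routes the coefficient estimate through $\frac{1}{P^k}k!\bigl(\sum_{P\le p\le 2P}1/p\bigr)^k$ rather than your $k!\bigl(\sum 1/p^2\bigr)^k$, but these are equivalent, and the remaining bookkeeping closes exactly as you anticipate.
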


\begin{proof}
Let $k = \lceil \log T / \log P \rceil$ and $$P(s)^k =: \sum_{P^k \leq n \leq (2P)^k} b(n) n^{-s}.$$
Notice that
\[
\begin{split}
&\sum_{P^k \leq n \leq (2P)^k} \left(\frac{b(n)}{n}\right)^2 \leq \sum_n \left(\sum_{\substack{p_1 \dotsm p_k = n \\ P \leq p_j \leq 2P}} \frac{1}{p_1 \dotsm p_k}\right)^2  \\
&\leq \frac{1}{P^k} \sum_{\substack{p_1 \dotsm p_k = q_1 \dotsm q_k  \\ P \leq p_j, q_j \leq 2P}} \frac{1}{p_1 \dotsm p_k} \leq \frac{1}{P^k} k! \Big ( \sum_{P \leq p \leq 2P} \frac{1}{p} \Big )^{k}.
\end{split}
\]

Hence by the previous lemma and Chebyschev's inequality
\[
\begin{split}
|\mathcal{T}| &\ll V^{2k} \cdot (T+ (2P)^k) \log (2P)^k \frac{1}{P^k} k! \Big( \sum_{P \leq p \leq 2P} \frac{1}{p} \Big )^{k} \\
&\ll T^{2\frac{\log V}{\log P}} V^2 5^k k!.
\end{split}
\]
\end{proof}

For sparse sets $\mathcal{T}$ one can use work of Hal\'asz to improve on the bound given for $\sum_{t \in \mathcal{T}} |A(it)|^2$ in Lemma~\ref{le:discMVT}. We will actually need two versions of Hal\'asz's inequality.
The first, stated below, works for arbitrary Dirichlet polynomials supported on integers.
The second, stated in Lemma \ref{le:Hallargevalprimes} requires the Dirichlet polynomial to be supported
on the primes, and is stronger in certain situations. 
Accordingly we call the first Lemma a ``Hal\'asz inequality for the integers'' and the second
a ``Hal\'asz inequality for the primes''. 
\begin{lemma}[Hal\'asz inequality for integers]
\label{le:Hallargevalint}
Let $A(s) = \sum_{n \leq N} a_n n^{-it}$ and let $\mathcal{T}$ be a sequence of well-spaced points. Then
$$
\sum_{t \in \mathcal{T}} |A(it)|^2 \ll (N + |\mathcal{T}| \sqrt{T}) \log 2T \sum_{n \leq N} |a_n|^2
$$
\end{lemma}
\begin{proof}
See \cite[Theorem 9.6]{IwKo04}.
\end{proof}
Let us now explain why we need a separate ``Hal\'asz inequality for the primes''. 
In all the mean and large value theorems presented so far, the term $N \sum_{n \leq N} |a_n|^2$ reflects the largest possible value of $|A(it)|^2$. However, when $n$ is supported on a thin sets such as primes, such a bound loses a logarithmic factor compared to the expected maximum (even when there is no $\log 2T$ or $\log 2N$ present). Our ``Hal\'asz inequality for the primes'' recovers this loss when $\mathcal{T}$ is very small, which is enough for us. The
proof relies on the duality principle, which we state below. 
\begin{lemma}[Duality principle]
\label{le:duality}
Let $\mathcal{X} = (x_{mn})$ be a complex matrix and $D \geq 0$. The following two statements are equivalent:
\begin{itemize}
\item For any complex numbers $a_n$
\[
\sum_{m}\left|\sum_{n} a_n x_{mn}\right|^2 \leq D \sum_{n} |a_n|^2;
\] 
\item For any complex numbers $b_m$
\[
\sum_{n}\left|\sum_{m} b_m x_{mn}\right|^2 \leq D \sum_{m} |b_m|^2.
\] 
\end{itemize}
\end{lemma}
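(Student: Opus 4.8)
The duality principle (Lemma~\ref{le:duality}) is a completely standard fact, so the plan is simply to recognize it as the Hilbert-space adjoint identity: the two displayed inequalities assert that a certain linear operator and its adjoint have the same operator norm.

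\textbf{Setup.} View $\mathcal{X} = (x_{mn})$ as defining a linear map $T$ sending a vector $a = (a_n)$ to the vector $Ta = \big(\sum_n a_n x_{mn}\big)_m$. The first displayed statement says $\|Ta\|^2 \leq D\|a\|^2$ for all $a$, i.e. $\|T\| \leq \sqrt{D}$. The transpose matrix $\mathcal{X}^{\top}$ (entries $x_{mn}$ indexed by $n$ then summed over $m$) defines the map $b \mapsto \big(\sum_m b_m x_{mn}\big)_n$, which is exactly $T^{\top}$; the second statement says $\|T^{\top}b\|^2 \leq D\|b\|^2$ for all $b$, i.e. $\|T^{\top}\| \leq \sqrt{D}$. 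So the lemma reduces to the assertion $\|T\| = \|T^{\top}\|$.

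\textbf{Proof of the norm identity.} I would prove each implication by a direct Cauchy--Schwarz argument so as not to worry about completeness or boundedness subtleties (the sums can in principle be infinite, but it suffices to treat finitely supported $a, b$ and take limits, or one may assume finite matrices as in the application). Suppose the first inequality holds. Given $b = (b_m)$, compute
\[
\sum_n \Big| \sum_m b_m x_{mn} \Big|^2 = \sum_n \overline{c_n} \sum_m b_m x_{mn} = \sum_m b_m \sum_n \overline{c_n}\, x_{mn} = \sum_m b_m \overline{\Big(\sum_n c_n \overline{x_{mn}}\Big)},
\]
where $c_n := \sum_m b_m x_{mn}$; by Cauchy--Schwarz this is at most $\big(\sum_m |b_m|^2\big)^{1/2}\big(\sum_m |\sum_n c_n \overline{x_{mn}}|^2\big)^{1/2}$. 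Applying the first inequality to the coefficients $\overline{c_n}$ (and noting $\overline{x_{mn}}$ is the matrix of the conjugate operator, with the same norm bound $D$ — alternatively just reindex) bounds the second factor by $\big(D \sum_n |c_n|^2\big)^{1/2} = \big(D\sum_n |\sum_m b_m x_{mn}|^2\big)^{1/2}$. Writing $S := \sum_n |\sum_m b_m x_{mn}|^2$, we have shown $S \leq \big(\sum_m |b_m|^2\big)^{1/2} (DS)^{1/2}$, and dividing by $S^{1/2}$ (trivial if $S = 0$) gives $S \leq D \sum_m |b_m|^2$, which is the second statement. The reverse implication is identical with the roles of $m$ and $n$, and of $a$ and $b$, interchanged (the relation between the two statements being symmetric).

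\textbf{Main obstacle.} There is no real obstacle here — this is textbook linear algebra / functional analysis, and the only point requiring a word of care is the handling of complex conjugates when one insists on the $\ell^2$ inner-product form rather than the bilinear form; this is dispensed with by the reindexing remark above, or simply by observing that passing from $\mathcal{X}$ to $\overline{\mathcal{X}}$ changes neither hypothesis nor conclusion. One may also restrict to finitely supported sequences throughout and pass to the limit, which is all that is needed for the applications in the paper. I would therefore keep the written proof to a few lines, citing the Cauchy--Schwarz computation above.
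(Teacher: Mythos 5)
Your proof is correct. The paper itself does not supply a proof for this lemma: it simply cites \cite[Chapter 7, Theorem 6, p.\ 134]{MontgomeryTenLecture}, where the standard argument (the one you reproduce) appears. Your write-up is the textbook Cauchy--Schwarz bootstrap: set $c_n = \sum_m b_m x_{mn}$, write $\sum_n |c_n|^2$ as a bilinear pairing, apply Cauchy--Schwarz, and feed the result back into the hypothesis to obtain $S \leq \big(\sum_m|b_m|^2\big)^{1/2}(DS)^{1/2}$, hence $S \leq D\sum_m|b_m|^2$. The one point that genuinely needs care --- that the hypothesis is stated for $\mathcal{X}$ but after Cauchy--Schwarz you are looking at a norm involving $\overline{\mathcal{X}}$ --- you handle correctly by substituting $a_n = \overline{c_n}$ and using $|\overline{z}|=|z|$, which shows $\mathcal{X}$ and $\overline{\mathcal{X}}$ satisfy the hypothesis with the same constant $D$. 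Your remark that it suffices to treat finitely supported sequences (so that all sums converge and the division by $S^{1/2}$ is legitimate when $S>0$) is also the right way to avoid convergence issues. In short: same approach as the cited reference, correctly executed.
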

\begin{proof}
See \cite[Chapter 7, Theorem 6, p. 134]{MontgomeryTenLecture}
\end{proof}

\begin{lemma}[Hal\'asz inequality for primes]
\label{le:Hallargevalprimes}
Let $P(s) = \sum_{P \leq p \leq 2P} a_p p^{-s}$ be a Dirichlet polynomial whose coefficients are supported on the primes and let $\mathcal{T} \subset [-T, T]$ be a sequence of well-spaced points. Then
$$
\sum_{t \in \mathcal{T}} |P(it)|^2 \ll \left(P + |\mathcal{T}| P \exp\left(-\frac{\log P}{(\log T)^{2/3+\varepsilon}}\right)(\log T)^2\right) \cdot \sum_{P \leq p \leq 2P} \frac{|a_p|^2}{\log P} .
$$
\end{lemma}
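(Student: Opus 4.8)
The plan is to reduce the prime-supported estimate to the integer Hal\'asz inequality (Lemma~\ref{le:Hallargevalint}) via the duality principle (Lemma~\ref{le:duality}). First I would set up the matrix $\mathcal{X} = (x_{tp})$ with rows indexed by the well-spaced points $t \in \mathcal{T}$ and columns by primes $p \in [P, 2P]$, taking $x_{tp} = p^{-1/2}/p^{it} \cdot (\log P)^{1/2}$, so that the left-hand side $\sum_{t \in \mathcal{T}} |P(it)|^2$ with $P(s) = \sum a_p p^{-s}$ becomes $\sum_t |\sum_p c_p x_{tp}|^2$ for suitable $c_p$ related to $a_p$, and $\sum_p |c_p|^2$ matches the right-hand side normalization $\sum_p |a_p|^2/\log P$. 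The desired bound is exactly the statement that $D = P + |\mathcal{T}| P \exp(-\log P/(\log T)^{2/3+\varepsilon})(\log T)^2$ works in the first bullet of Lemma~\ref{le:duality}, so it suffices to prove the dual statement: for arbitrary complex $b_t$, $t \in \mathcal{T}$,
\[
\sum_{P \leq p \leq 2P} \frac{1}{p}\Bigl|\sum_{t \in \mathcal{T}} b_t p^{-it}\Bigr|^2 \ll \Bigl(1 + |\mathcal{T}| \exp\bigl(-\tfrac{\log P}{(\log T)^{2/3+\varepsilon}}\bigr)(\log T)^2\Bigr) \sum_{t \in \mathcal{T}} |b_t|^2.
\]

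Next I would pass from a sum over primes to a sum over all integers by majorizing $\frac{1}{p}\mathbf{1}_{p \text{ prime}}$ by something smoother. The standard device here is to write, for $P \le p \le 2P$, $1 \ll \frac{\log P}{1} \cdot \frac{1}{\log p} \ll \log P \sum_{d \mid p}\Lambda(d)/\log p$ — i.e. use that $\log p = \sum_{d \mid p}\Lambda(d)$ is detected by a single term — or more robustly, expand $|\sum_t b_t p^{-it}|^2 = \sum_{t,r} b_t \bar b_r p^{-i(t-r)}$ and sum $\sum_{P \le p \le 2P} p^{-1-i(t-r)}$ using the prime number theorem / zeta estimates. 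The diagonal terms $t = r$ contribute $\ll \sum_t |b_t|^2 \cdot \sum_{P\le p \le 2P} 1/p \ll \sum_t |b_t|^2$, which gives the leading $1$. For the off-diagonal terms $t \neq r$ (so $|t - r| \ge 1$ by well-spacedness, and $|t-r| \le 2T$), I would bound $\bigl|\sum_{P \le p \le 2P} p^{-1-i(t-r)}\bigr|$ using the zero-free region for $\zeta$: writing this sum via partial summation against $\psi(x) - x$ and invoking the classical bound $\psi(x) = x + O(x\exp(-c\sqrt{\log x}))$, or the sharper Vinogradov--Korobov form, one gets that for $|u| = |t-r|$ this is $\ll \exp(-c(\log P)/(\log |u|)^{2/3+\varepsilon})$ uniformly, which is $\ll \exp(-c(\log P)/(\log T)^{2/3+\varepsilon})$. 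Then crudely bounding $|b_t \bar b_r| \le \tfrac12(|b_t|^2 + |b_r|^2)$ and noting there are at most $|\mathcal{T}|$ choices for the second index gives the off-diagonal contribution $\ll |\mathcal{T}| \exp(-c(\log P)/(\log T)^{2/3+\varepsilon}) \sum_t |b_t|^2$; absorbing $(\log T)^2$ and adjusting $\varepsilon$ handles any polynomial-in-$\log T$ losses from partial summation.

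The main obstacle I anticipate is getting the prime number theorem error term in exactly the shape $\exp(-c(\log P)/(\log |t-r|)^{2/3+\varepsilon})$ with the correct interplay of the two scales $P$ and $T$ — in particular one must be careful that the zero-free region of Vinogradov--Korobov type gives savings in terms of $\log|u|$ (the ``height'') while we want the main gain to come from $\log P$ (the ``length''); the ratio $\log P/(\log T)^{2/3+\varepsilon}$ is precisely what the classical estimate $\sum_{x \le p \le 2x} p^{-it} \ll x \exp(-c \log x/(\log t)^{2/3+\varepsilon})$ (for $t \ge 2$, say) delivers, so the key is to quote this in the right form and handle the edge cases $|t-r|$ small (where one falls back to the classical $\exp(-c\sqrt{\log P})$ and the claimed bound is easier) versus $|t-r|$ as large as $T$. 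A secondary technical point is the bookkeeping in the duality reduction: one must ensure the normalizations line up so that the factor $1/\log P$ on the right-hand side of the lemma is correctly produced and no spurious $\log P$ is lost, but this is routine once the matrix $\mathcal{X}$ is chosen as above. Everything else — the diagonal estimate, Mertens' theorem for $\sum 1/p$, and the final reassembly via Lemma~\ref{le:duality} — is standard.
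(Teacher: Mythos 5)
Your overall strategy---duality followed by an estimate for the prime exponential sum $\sum_{P\le p\le 2P} p^{-1-iu}$ via the zero-free region---is the same one the paper uses (they work with $\sum_p \log p\,\big|\sum_t \eta_t p^{it}\big|^2$ in place of your $\sum_p p^{-1}\big|\cdots\big|^2$, which is a cosmetic renormalization). However, the way you propose to obtain the off-diagonal estimate does not work as stated. Partial summation of $\sum_{P\le p\le 2P}p^{-1-iu}$ against $\psi(x)-x$ (or $\pi(x)-\mathrm{Li}(x)$) produces a term of the form $|u|\int_P^{2P} |E(t)|\,t^{-2}\,dt \asymp |u|\exp(-c\sqrt{\log P})$, so the zero-free-region savings are swamped by the factor $|u|$; it is simply false that this route gives a bound $\ll\exp(-c\log P/(\log|u|)^{2/3+\varepsilon})$ \emph{uniformly} in $1\le|u|\le 2T$. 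In the paper's application $P\asymp\exp((\log X)^{1-1/48})$ while $T$ can be as large as $X$, so this loss is fatal. The paper's actual proof avoids it by smoothing: it introduces a cutoff $f$, writes $\sum_n \Lambda(n)n^{it}f(n/P)$ as a Mellin integral, and shifts the contour into the Vinogradov--Korobov zero-free region $\sigma=1-c(\log T)^{-2/3+\varepsilon}$; the rapid decay of $\widetilde f$ then confines the integral, and the $(\log T)^2$ in the lemma is precisely the cost of bounding $\zeta'/\zeta$ on the shifted contour. Some Perron/contour device (not bare partial summation against the PNT error) is genuinely needed here.

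There is also a concrete error in the ``routine'' normalization step. With your $x_{tp}=p^{-1/2-it}(\log P)^{1/2}$ and the corresponding $c_p=a_p p^{1/2}(\log P)^{-1/2}$ one has $\sum_p|c_p|^2\asymp P\sum_p|a_p|^2/\log P$, so the $D$ you need in Lemma~\ref{le:duality} is $\asymp 1+|\mathcal{T}|\exp(\cdots)(\log T)^2$, and the dual inequality you must prove is $\sum_p p^{-1}\big|\sum_t b_t p^{-it}\big|^2\ll\frac{1}{\log P}\big(1+|\mathcal{T}|\exp(\cdots)(\log T)^2\big)\sum_t|b_t|^2$, with an extra $1/\log P$ compared with what you wrote. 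The diagonal does supply this $1/\log P$ via Mertens, and a correct estimate of $\sum_p p^{-1-iu}$ produces it on the off-diagonal as well, but the dual bound as you stated it would only yield the lemma with a spurious factor of $\log P$---and as the paper's discussion preceding the lemma makes clear, recovering that $1/\log P$ over Lemma~\ref{le:Hallargevalint} is the entire point of having a separate Hal\'asz inequality for primes.
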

\begin{proof}
By the duality principle (Lemma~\ref{le:duality}) applied to $(p^{it})_{P \leq p \leq 2P, t \in \mathcal{T}}$ it is enough to prove that
$$
\sum_{P \leq p \leq 2P} \log p \left| \sum_{t \in \mathcal{T}} \eta_t p^{it} \right|^2 \ll \left(P + |\mathcal{T}| P \exp\left(-\frac{\log P}{(\log T)^{2/3+\varepsilon}}\right)(\log T)^2\right) \cdot \sum_{t \in \mathcal{T}} |\eta_t|^2
$$
for any complex numbers $\eta_t$. Opening the square, we see that
\begin{align*}
 \sum_{P \leq p \leq 2P} \log p \Big | \sum_{t \in \mathcal{T}} \eta_t p^{it} \Big |^2
& \leq 
\sum_{p^k} \log p \Big | \sum_{t \in \mathcal{T}} \eta_t p^{kit} \Big |^2 f \Big ( \frac{p^k}{P} \Big ) \\ 
& \leq \sum_{t,t'\in\mathcal{T}} |\eta_t \eta_{t'}| \Big | \sum_{p^k} \log p \cdot p^{ki(t - t')} f \Big ( \frac{p^k}{P} \Big )
\Big |
\end{align*}
where $f(x)$ is a smooth compactly supported function such that
$f(x) = 1$ for $1 \leq x \leq 2$ and $f$ decays to zero outside of the interval $[1,2]$.  
Let $\widetilde{f}$ denote the Mellin transform of $f$. Then 
$\widetilde{f}(x + iy) \ll_{A,B} (1 + |y|)^{-B}$ uniformly in $|x| \leq A$.  In addition,
\begin{align} \label{eq:sidecomp} 
\sum_{n} \Lambda(n) n^{i t} & f \Big ( \frac{n}{P} \Big )  =
-\frac{1}{2\pi i} \int_{2 - i \infty}^{2 + i \infty} 
\widetilde{f}(s) \frac{\zeta'}{\zeta}(s - it)
\frac{P^s}{s} ds 
\end{align}
We truncate the integral at $|t| = T$, making a negligible error of $O_A(T^{-A})$. In the remaining integral, we shift the contour to $\sigma = 1 - c ( \log T)^{-2/3 + \varepsilon}$, staying in the zero-free region of the $\zeta$-function, and use the following bound there (see \cite[formula (1.52)]{Ivic03})
$$
\frac{\zeta'}{\zeta}(\sigma + it) = \sum_{\substack{\varrho = \beta+i\gamma \\ |t-\gamma| < 1}} \frac{1}{\sigma+it-\varrho} + O(\log(|t|+2)) \ll (\log T)^{1 + 2/3 + \varepsilon}
$$
One readily checks this bound by noticing that there are $O(\log T)$ zeros in the sum and they are $\gg (\log T)^{-2/3 + \varepsilon}$ away from the contour. It follows that (\ref{eq:sidecomp}) is equal to 
\begin{align*}
& \frac{\widetilde{f}(1 + it)}{1 + it} \cdot P^{1 + it}
+ O\left(P \exp\left(-\frac{\log P}{(\log T)^{2/3+\varepsilon}}\right)(\log T)^2\right)
\end{align*}
Combining the above observations and using the inequality $|\eta_t \eta_{t'}| \leq |\eta_t|^2 + |\eta_{t'}|^2$ we obtain
\[
\begin{split}
&\sum_{P \leq p \leq 2P} \log p \left| \sum_{t \in \mathcal{T}} \eta_t p^{it} \right|^2 \\
&\ll \sum_{t,t'\in\mathcal{T}} |\eta_t \eta_{t'}| \Big | \sum_{p^k} \log p \cdot p^{ki(t - t')} f \Big ( \frac{p^k}{P} \Big )
\Big | \\
& \ll \sum_{t,t'\in\mathcal{T}} (|\eta_t|^2+|\eta_{t'}|^2) \left(\left| \frac{\widetilde{f}(1 + i(t-t'))}{1 + i(t-t')}\right| \cdot P
+ P \exp\left(-\frac{\log P}{(\log T)^{2/3+\varepsilon}}\right)(\log T)^2\right) \\
&
\ll \Big (P + |\mathcal{T}| P \exp\left(-\frac{\log P}{(\log T)^{2/3+\varepsilon}}\right)(\log T)^2 \Big ) \cdot \sum_{t \in \mathcal{T}} |\eta_t|^2
\end{split}
\]
since $\sum_{t \in \mathcal{T}} |\widetilde{f}(1 - i(t - t'))| = O(1)$. 
\end{proof}
\begin{remark}
On the Riemann Hypothesis one can replace $P \exp(- \log P / (\log T)^{2/3 + \varepsilon}) (\log T)^2$ in the above lemma by $P^{1/2} \log P \log T$. 
\end{remark}

\section{Decomposition of Dirichlet polynomials}
In this section we prove a technical version of the Buchstab decomposition~(\ref{eq:Buchstab}). We are grateful to Terry Tao
for pointing out that our ``Buchstab decomposition'' is a variant of Ramar\'e's identity \cite[Section 17.3]{Opera}.

\begin{lemma} \label{lem:decomp}
  Let $H \geq 1$ and $Q \geq P \geq 1$. Let $a_m, b_m$ and $c_p$ be bounded sequences such that $a_{mp} = b_m c_p$ whenever $p \nmid m$ and $P \leq p \leq Q$. Let
\begin{align*}
Q_{v,H}(s) & = \sum_{\substack{P \leq p \leq Q \\ e^{v/H} \leq p \leq e^{(v + 1)/H}}}
\frac{c_p}{p^s} \quad \text{and} \\
R_{v,H}(s) & = \sum_{\substack{X e^{-v/H} \leq m \leq 2X e^{-v/H}}} \frac{b_m}{m^s} \cdot \frac{1}{\#\{P \leq q \leq Q: q | m,  q \in \mathbb{P}\} + 1}
\end{align*}
and let $\mathcal{T} \subseteq [-T, T]$.
Then, 
\begin{align*}
\int_{\mathcal{T}} & \Big | \sum_{\substack{ X \leq n \leq 2X}}
\frac{a_n}{n^{1+it}} \Big |^2 dt \ll H \log \Big ( \frac{Q}{P} \Big ) \times \sum_{j \in \mathcal{I}} \int_{\mathcal{T}} \Big | Q_{j,H}(1 + it) R_{j,H}(1 + it)|^2 dt \\ & + \frac{T + X}{X} \Bigg (\frac{1}{H} + \frac{1}{P} + \sum_{\substack{X \leq n \leq 2X \\ (n, \prod_{P \leq p \leq Q} p) = 1}} \frac{|a_n|^2}{n} \Bigg )
\end{align*}
where $\mathcal{I}$ is the interval $\lfloor H \log P \rfloor \leq j \leq  H \log Q$.
\end{lemma}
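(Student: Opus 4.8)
\noindent
The plan is to deduce the bound from Ramar\'e's identity together with repeated applications of the mean value theorem for Dirichlet polynomials (Lemma~\ref{le:contMVT}). Write $\omega(n)$ for the number of primes $p\in[P,Q]$ dividing $n$, so that the weight appearing in $R_{v,H}$ is $1/(\omega(m)+1)$. First I would split $\sum_{X\le n\le 2X}a_n n^{-1-it}$ according to whether $\omega(n)=0$ or $\omega(n)\ge 1$: the piece with $\omega(n)=0$ has coefficients supported on integers coprime to $\prod_{P\le p\le Q}p$, and extending the range of integration to $[-T,T]$ and applying Lemma~\ref{le:contMVT} bounds its square-integral over $\mathcal{T}$ by $\ll\frac{T+X}{X}\sum_{(n,\prod_{P\le p\le Q}p)=1}|a_n|^2/n$, which is the last error term. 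To the piece with $\omega(n)\ge 1$ I would apply Ramar\'e's identity: each such $n$ is written as $pm$ with $p\in[P,Q]$ prime in exactly $\omega(n)$ ways, so
\[
\sum_{\substack{X\le n\le 2X\\ \omega(n)\ge 1}}\frac{a_n}{n^{1+it}}=\sum_{P\le p\le Q}\ \sum_{X/p\le m\le 2X/p}\frac{a_{pm}}{(pm)^{1+it}}\cdot\frac{1}{\omega(pm)}.
\]

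\noindent
Next I would separate the terms with $p\mid m$ from those with $p\nmid m$. When $p\mid m$ one has $p^2\mid pm$, so these terms constitute a Dirichlet polynomial supported on the $n\in[X,2X]$ that are divisible by the square of a prime from $[P,Q]$; its coefficient at $n$ equals $\frac{a_n}{\omega(n)}\cdot\#\{p\in[P,Q]:p^2\mid n\}$, which is $O(1)$ since $\#\{p:p^2\mid n\}\le\omega(n)$. As there are only $\ll X/P$ such $n$ in $[X,2X]$, Lemma~\ref{le:contMVT} bounds the square-integral of this piece by $\ll\frac{T+X}{XP}$.

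\noindent
In the remaining terms ($p\nmid m$) I would substitute $a_{pm}=b_m c_p$ and $\omega(pm)=\omega(m)+1$, obtaining $\sum_{P\le p\le Q}\frac{c_p}{p^{1+it}}\sum_{\substack{X/p\le m\le 2X/p\\ p\nmid m}}\frac{b_m/(\omega(m)+1)}{m^{1+it}}$. Now I would partition the primes $p\in[P,Q]$ into the blocks $e^{j/H}\le p\le e^{(j+1)/H}$, $j\in\mathcal{I}$ (there are $\ll H\log(Q/P)+1$ of them), and apply Cauchy--Schwarz over the blocks, producing the factor $H\log(Q/P)$. Within the $j$-th block I would then (a) replace the $p$-dependent range $X/p\le m\le 2X/p$ by the fixed range $Xe^{-j/H}\le m\le 2Xe^{-j/H}$ appearing in $R_{j,H}$, and (b) restore the terms with $p\mid m$; the leftover main term is then exactly $Q_{j,H}(1+it)R_{j,H}(1+it)$ (the weights $1/(\omega(m)+1)$ match). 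Step (b) only reintroduces ``wrong'' cross-terms with $p^2\mid n$, whose coefficients are again $O(1)$ (the weight $1/(\omega(n)+1)$ being the same for every such $p$), and hence contribute $\ll\frac{T+X}{XP}$ by Lemma~\ref{le:contMVT}.

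\noindent
Step (a), the range discrepancy, is the step I expect to be the main obstacle. The point is that, since $p\in[e^{j/H},e^{(j+1)/H}]$, for $n=pm\in[X,2X]$ the condition $Xe^{-j/H}\le m\le 2Xe^{-j/H}$ can fail only when $X\le n<pXe^{-j/H}<Xe^{1/H}$, and symmetrically for $n$ in a range of length $\ll X/H$ above $2X$; thus the discrepancy is a Dirichlet polynomial supported on $n$ lying in two intervals of total length $\ll X(e^{1/H}-1)\ll X/H$. For the terms with $p\| n$ one again has $c_p b_{n/p}=a_n$ and $\omega(n/p)+1=\omega(n)$, so the coefficient at such $n$ is $\frac{a_n}{\omega(n)}$ times a count $\le\omega(n)$, hence $O(1)$; the terms with $p^2\mid n$ are $O(1)$ as before. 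Applying Lemma~\ref{le:contMVT} to this short polynomial therefore gives a contribution $\ll\frac{T+X}{XH}$. Collecting the main term, after Cauchy--Schwarz, together with the three error terms just estimated yields the claimed inequality; the only delicate bookkeeping is checking that every $p^2\mid n$ contribution really has $O(1)$ coefficients, which is precisely where the $1/(\omega+1)$ normalisation in Ramar\'e's identity is used.
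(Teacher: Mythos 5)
Your proposal is correct and follows essentially the same route as the paper: Ramar\'e-type identity, extraction of the $(n,\prod_{P\le p\le Q}p)=1$ piece, separation of the $p^2\mid n$ terms, passage to dyadic blocks $[e^{j/H},e^{(j+1)/H}]$ with range adjustment, and then Cauchy--Schwarz on the main term $\sum_j Q_{j,H}R_{j,H}$ followed by the mean value theorem on each error polynomial. The one small refinement in your write-up is starting from the exact identity $\sum_{\omega(n)\ge 1}a_n n^{-s}=\sum_p\sum_m a_{pm}(pm)^{-s}/\omega(pm)$ (so the $p^2\mid n$ correction appears cleanly as an $O(1)$-coefficient polynomial on a sparse set), whereas the paper's displayed identity~\eqref{firstequ} with weight $1/(\omega(m;P,Q)+1)$ is only exact up to an error of the same shape; this does not affect the final bound, since both routes absorb that discrepancy into the $\frac{T+X}{XP}$ term.
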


\begin{proof}
Let us write $s=1+it$ and notice that 
\begin{align} \label{firstequ}
\sum_{\substack{ X \leq n \leq 2X}} \frac{a_n}{n^s} =
\sum_{P \leq p \leq Q} & \sum_{\substack{X/p \leq m \leq 2X/p}} \frac{a_{pm}}{(pm)^s} \cdot \frac{1}{\# \{P \leq q \leq Q: q | m, q \in \mathbb{P} \} + \mathbf{1}_{(p,m) = 1}}  + \sum_{\substack{X \leq n \leq 2X \\ (n,\mathcal{P}) = 1}} \frac{a_n}{n^s}
\end{align}
where $\mathcal{P} = \prod_{P \leq p \leq Q} p $
and $\mathbf{1}_{(p,m) = 1}$ is the indicator function of $(p,m) = 1$. 
% Notice that we can replace $a_{pm}$ by $b_mc_p$ if we add the term $a_{p^2 m} - b_{pm}c_p$ for those $m$ which are divisible by $p$ 
%{\tt previously it was $a_{p m} - b_m c_p$ which looked like a typo --- I found this somewhat confusing since $m$ is changing to mp in middle of the sentence; see if you find the current version clear -- Yes, it looks good to me!}. 
Notice that when $p \nmid m$, we can replace $a_{pm}$ by $b_mc_p$. Let also $\omega(n;P,Q) = \#\{P \leq p \leq Q: p | n\}$. This allows us to rewrite the first summand as
\begin{align*}
  \sum_{P \leq p \leq Q} & \frac{c_p}{p^s} \sum_{\substack{X/p \leq m \leq 2X/p}} \frac{b_m}{m^s} \cdot \frac{1}{\omega(m;P,Q) + 1} \\ & + \sum_{P\leq p \leq Q} \sum_{\substack{X/p \leq m \leq 2X/p \\ p \mid m}}
\Big (   \frac{a_{p m}}{\omega(m; P, Q)} \cdot \frac{1}{(p m)^{s}} - \frac{b_{m}c_p}{(p m)^s} \cdot \frac{1}{\omega (m;P,Q) + 1} \Big ). 
\end{align*}
We split the first sum further into dyadic ranges getting that it is
$$
\sum_{j \in \mathcal{I} } \ \sum_{\substack{e^{j/H} \leq p < e^{(j+1)/H} \\ P \leq p \leq Q}}
\frac{c_p}{p^s} \ \sum_{\substack{X e^{-(j+1)/H} \leq m \leq 2X e^{-j/H} \\ X \leq m p \leq 2X}} \frac{b_m}{m^s} \cdot \frac{1}{\omega(m; P, Q) + 1}
$$ 
We remove the condition $X \leq mp \leq 2X$ overcounting at most by the integers $mp$ in the ranges $[X e^{-1/H}, X]$ and $[2X, 2X e^{1/H}]$. Similarly, removing numbers with $Xe^{-(j+1)/H} \leq m \leq Xe^{-j/H}$ we undercount at most by integers $mp$ in the range $[Xe^{-1/H}, Xe^{1/H}]$. Therefore we can, for some bounded $d_m$, rewrite (\ref{firstequ}) as
\begin{align*}
\sum_{j \in \mathcal{I}} & Q_{j, H}(s) R_{j, H}(s)+  \sum_{\substack{X e^{-1/H} \leq m \leq Xe^{1/H}}} \frac{d_m}{m^s} + \sum_{\substack{2X \leq m \leq 2X e^{1/H}}} \frac{d_m}{m^s} \\
                         & + \sum_{P \leq p \leq Q} \sum_{\substack{X/p^2 \leq m \leq 2 X/p^2}}
                           \Big ( \frac{a_{p^2 m}}{\omega(mp; P, Q)} \cdot \frac{1}{(p^2 m)^s} - \frac{c_p b_{p m}}{\omega(m p; P, Q) + 1} \cdot \frac{1}{(p^2 m)^s} \Big ) 
                           + \sum_{\substack{X \leq n \leq 2X \\ (n,\mathcal{P}) = 1}} \frac{a_n}{n^s}
\end{align*}
We square this, integrate over $\mathcal{T}$ and then apply Cauchy-Schwarz on the first sum over $j$ and the mean-value theorem (Lemma~\ref{le:contMVT}) on the remaining sums. This gives the result since it is easily seen that the later mean-values are bounded by the stated quantities. 
\end{proof}

\section{Moment computation}
In this section we prove a lemma which allows us to compute the second moment of the Dirichlet polynomial in~(\ref{eq:thedirpoly}). Let us first introduce some
relevant notation. 
Let $Y_1, Y_2 \geq 1$, and consider,
$$
Q(s) = \sum_{Y_1 \leq p \leq 2Y_1} \frac{c_p}{p^s} \quad \text{and} \quad 
A(s) = \sum_{\substack{X/Y_2 \leq m \leq 2X/Y_2}} \frac{a_m}{m^s}
$$
with coefficients $|a_m|, |c_p| \leq 1$. 
\begin{lemma}
\label{le:moment}
Let $\ell = \lceil  \frac{\log Y_2}{\log Y_1} \rceil$. Then
$$
\int_{-T}^{T} |Q(1 + it)^{\ell} \cdot A(1 + it)|^2 dt \ll \Big ( \frac{T}{X} + 2^\ell Y_1 \Big ) \cdot (\ell + 1)!^2
$$
\end{lemma}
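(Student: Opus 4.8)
The plan is to expand $Q(1+it)^{\ell} A(1+it)$ as a single Dirichlet polynomial and apply the continuous mean value theorem (Lemma~\ref{le:contMVT}). Writing $Q(s)^{\ell} = \sum_{Y_1^{\ell} \le n \le (2Y_1)^{\ell}} q_\ell(n) n^{-s}$ where $q_\ell(n) = \sum_{p_1 \cdots p_\ell = n,\ Y_1 \le p_i \le 2Y_1} c_{p_1}\cdots c_{p_\ell}$, the product $Q(1+it)^{\ell} A(1+it)$ is a Dirichlet polynomial of length at most $(2Y_1)^{\ell} \cdot 2X/Y_2 \ll 2^{\ell} X Y_1^{\ell}/Y_2 \ll 2^{\ell} X$, using $Y_1^{\ell} \le Y_1 Y_2$ from the definition of $\ell$. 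Hence Lemma~\ref{le:contMVT} gives that the integral is
\[
\ll \Big( T + 2^{\ell} X \Big) \sum_{n} \Big| \sum_{n_1 n_2 = n} \frac{q_\ell(n_1)}{n_1^{1+it}} \cdot \text{(coefficient of $A$)} \Big|^2 \cdot \frac{1}{n^2}.
\]
More cleanly, since all coefficients are $\ll 1$ in absolute value, one bounds $\sum_n |(\text{coeff of } Q^\ell A)(n)|^2 / n^2$ by $\big(\sum_{n_1} |q_\ell(n_1)|/n_1\big)^2 \cdot \big(\sum_{X/Y_2 \le m \le 2X/Y_2} 1/m\big)$ after using Cauchy--Schwarz appropriately, or more directly by passing to the Dirichlet series coefficients.

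The key estimate is the mean square of the coefficients. The cleanest route: after applying Lemma~\ref{le:contMVT} the relevant sum is $\sum_{n} c_\ell(n)^2/n^2$ where $c_\ell(n)$ is the $n$-th coefficient of $Q(s)^\ell A(s)$; bounding $|a_m| \le 1$ and $|c_p| \le 1$ this is at most $\sum_n \big(\sum_{p_1\cdots p_\ell m = n} 1\big)^2 / n^2$ with $Y_1 \le p_i \le 2Y_1$ and $X/Y_2 \le m \le 2X/Y_2$. I would mimic the computation in the proof of Lemma~\ref{le:Rupest}: expand the square as a sum over $p_1 \cdots p_\ell m = p_1' \cdots p_\ell' m'$, use $n \ge Y_1^{\ell} \cdot X/Y_2 \gg X/2^\ell$ (again using $Y_1^\ell \le Y_1 Y_2 \le 2^\ell Y_1 Y_2 / 2$... more carefully $Y_1^{\ell-1} < Y_2$ so $Y_1^\ell < Y_1 Y_2$, giving $n \gg X/(2^\ell \cdot (\text{const}))$ — I must track this constant) to replace $1/n^2$ by $2^{2\ell}/X^2$ times $1/n$, and then the diagonal-type sum $\sum_{p_1\cdots p_\ell m = p_1'\cdots p_\ell' m'} 1/n$ factors, up to rearrangement, into $\ell!$ (choices of matching the primes, since the primes in $[Y_1,2Y_1]$ are essentially determined up to permutation once the product of any $\ell-1$ of them with $m$ is fixed — here the argument from Lemma~\ref{le:Rupest} that $p_1 \cdots p_\ell = q_1 \cdots q_\ell$ forces a permutation is used) times $\sum_{Y_1 \le p \le 2Y_1} 1/p \ll 1$ raised to the $\ell$, times $\sum_{m} 1/m \ll 1$. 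Collecting, $\sum_n c_\ell(n)^2/n^2 \ll 2^{2\ell} X^{-1} \cdot \ell! \cdot (\text{const})^\ell \cdot Y_1^{-\ell} \cdot (Y_2/X)$... and I need to reconcile this with the claimed bound $(T/X + 2^\ell Y_1)(\ell+1)!^2$.

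Let me restate the intended shape of the argument so the constants come out right. Applying Lemma~\ref{le:contMVT} to the polynomial of length $N \ll 2^\ell Y_1^\ell X / Y_2 \ll 2^\ell X$ gives $\ll (T + 2^\ell X) \sum_n c_\ell(n)^2/n^2$. For the coefficient sum, group $n = n_1 m$ with $n_1$ composed of $\ell$ primes from $[Y_1,2Y_1]$; then $c_\ell(n) \ll \sum_{n_1 m = n} |q_\ell(n_1)|$ and one gets $\sum_n c_\ell(n)^2/n^2 \le \big(\sum_{n_1} |q_\ell(n_1)|/n_1\big) \cdot \sup_{n_1}\big( n_1 \sum_{m \asymp X/Y_2} 1/(n_1 m)^2 \big) \cdot (\ldots)$ — cleaner is to split via Cauchy--Schwarz as in Lemma~\ref{lem:decomp}'s proof: $\sum_n c_\ell(n)^2/n^2 \le \big(\sum_{n_1} |q_\ell(n_1)|^2/n_1\big)\big(\sum_m |a_m|^2/m\big) \cdot \frac{1}{\min n} \ll \ell! (\sum 1/p)^\ell \cdot Y_1^{-\ell} \cdot \log 2 \cdot \frac{2^\ell Y_2}{X}$, where the bound $\sum_{n_1 \le (2Y_1)^\ell} |q_\ell(n_1)|^2/n_1 \le Y_1^{-\ell} \ell! (\sum_{Y_1\le p\le 2Y_1} 1/p)^\ell$ is exactly the computation in Lemma~\ref{le:Rupest}. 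Then $(\sum 1/p)^\ell \ll (\log 2)^\ell \le 1$ roughly, but to be safe one absorbs everything into $(\ell+1)!^2$: $\ell! \cdot Y_1^{-\ell} \cdot 2^\ell Y_2/X \cdot (T + 2^\ell X) \ll (T Y_2/(X Y_1^\ell) + 2^\ell Y_2/Y_1^\ell) \cdot 2^\ell \ell! \ll (T/X + 2^\ell Y_1)\cdot 2^\ell \ell! \cdot (\text{stuff})$ using $Y_2/Y_1^\ell \le Y_1$ hence $Y_2/Y_1^{\ell} \cdot 2^\ell \le 2^\ell Y_1$ and $Y_2/(X Y_1^\ell) \le Y_1/X \le 1/X$... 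I would clean up by noting $Y_2 \le Y_1^\ell \le Y_1 Y_2$ so $Y_2/Y_1^\ell \in [1/Y_1, 1]$, and the surplus powers of $2$ and the $(\sum 1/p)^\ell \ll (C/\log Y_1)^\ell$ factor (which is $\le 1$ once $Y_1$ is not tiny, and bounded by $e^{C}$ in general) are comfortably swallowed by upgrading one $\ell!$ to $(\ell+1)!^2$. The main obstacle is bookkeeping: getting the $2^\ell$, the $\ell!$ versus $(\ell+1)!^2$, and the $Y_1^{-\ell}Y_2 \le Y_1$ cancellation to line up cleanly, rather than any conceptual difficulty — the mechanism (mean value theorem plus the Lemma~\ref{le:Rupest}-style moment bound for a power of a prime-supported polynomial) is entirely standard.
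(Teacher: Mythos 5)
Your overall strategy — expand $Q^\ell A$ as a single Dirichlet polynomial, apply the continuous mean value theorem (Lemma~\ref{le:contMVT}), and then bound the resulting mean square of coefficients — is the same as the paper's. But there is a genuine gap in the coefficient estimate, and a smaller slip in the length bound.

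The smaller issue first: from $Y_1^{\ell} \le Y_1 Y_2$ you get that the polynomial length is $\ll 2^{\ell} X Y_1^{\ell}/Y_2 \le 2^{\ell} Y_1 X$, \emph{not} $\ll 2^{\ell} X$ as you wrote; the extra $Y_1$ is precisely where the $2^{\ell} Y_1$ in the statement comes from, so dropping it would lead to a claim stronger than the lemma (and incorrect).

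The real gap is the bound on $\sum_n c_\ell(n)^2/n^2$ where $c_\ell$ are the coefficients of $Q^\ell A$. You propose to ``mimic Lemma~\ref{le:Rupest}'': expand the square and count solutions to $p_1\cdots p_\ell m = p_1'\cdots p_\ell' m'$ as forcing a permutation of the primes. But unlike in Lemma~\ref{le:Rupest}, here the two sides are not pure products of $\ell$ primes from $[Y_1,2Y_1]$: the integer $m$ (of size $\asymp X/Y_2$) can itself be divisible by primes in $[Y_1,2Y_1]$, so $p_1\cdots p_\ell m = p_1'\cdots p_\ell' m'$ does \emph{not} force $\{p_i\}$ to be a permutation of $\{p_i'\}$. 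The same obstruction kills your Cauchy--Schwarz variant: the bound $\sum_n c_\ell(n)^2/n^2 \le (\sum_{n_1} |q_\ell(n_1)|^2/n_1)(\sum_m |a_m|^2/m)\cdot (\min n)^{-1}$ does not follow from Cauchy--Schwarz; what Cauchy--Schwarz actually gives inserts a factor $D(n)$ counting the divisors of $n$ that are products of $\ell$ primes from $[Y_1,2Y_1]$, and this can be large. The paper resolves exactly this overlap problem by bounding the number of representations $n = p_1\cdots p_\ell m$ by $\ell!\,g(n)$ for the multiplicative majorant $g$ with $g(p^k)=k+1$ on primes in $[Y_1,2Y_1]$ and $g(p^k)=1$ otherwise, and then invoking Shiu's theorem to get $\sum_{Y\le n\le 2Y} g(n)^2 \ll Y$ on each dyadic block (a trivial Euler-product bound would lose a $\log$). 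That majorant-plus-Shiu step is the content of the lemma; without it, or an equivalent device, your argument does not close.
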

\begin{proof}
The coefficients of the Dirichlet polynomial $Q(s)^{\ell} A(s)$ are supported on the interval
$$
[Y_1^\ell \cdot X/Y_2, (2Y_1)^\ell\cdot 2 X/Y_2] \subseteq [X, 2^{\ell+1}Y_1 X]
$$
Using the mean-value theorem for Dirichlet polynomials (Lemma~\ref{le:contMVT}) we see that
$$
\int_{-T}^{T} |Q(1 + it)^{\ell} \cdot A(1 + it)|^2 dt \ll (T + 2^{\ell} Y_1 X) \sum_{X \leq n \leq 2^{\ell+1} Y_1 X} \frac{1}{n^2} \cdot
\Bigg ( \sum_{\substack{n = m p_1 \ldots p_{\ell} \\ Y_1 \leq p_1, \ldots, p_{\ell} \leq 2Y_1 \\ X/Y_2 \leq m \leq 2X/Y_2}} 1 \Bigg )^2.
$$
Here
\[
\sum_{\substack{n = m p_1 \ldots p_{\ell} \\ Y_1 \leq p_1, \ldots, p_{\ell} \leq 2Y_1 \\ X/Y_2 \leq m \leq 2X/Y_2}} 1 \leq \ell! \cdot \sum_{\substack{n = mr \\ p \mid r \implies Y_1 \leq p \leq 2Y_1}} 1 =: \ell! \cdot g(n),
\]
say, where $g$ is multiplicative and
\[
g(p^k) = \begin{cases}
(k+1) & \text{if $Y_1 \leq p \leq 2Y_1$;} \\
1 & \text{otherwise.}
\end{cases}
\]
With this notation
\begin{equation}
\label{eq:Momboundg(n)}
\int_{-T}^{T} |Q(1 + it)^{\ell} \cdot A(1 + it)|^2 \ll (T + 2^{\ell} Y_1 X) \ell!^2 \sum_{X \leq n \leq 2^{\ell+1} Y_1 X} \frac{g(n)^2}{n^2}.
\end{equation}
By Shiu's bound~\cite[Theorem 1]{Shiu80} for sums of positive-valued multiplicative functions we have, for any $Y \geq 2$,
\begin{equation}
\label{eq:Shiucons}
\sum_{Y \leq n \leq 2Y} g(n)^2 \ll Y\prod_{p \leq Y} \left(1+\frac{|g(p)|^2-1}{p}\right) \ll Y.
\end{equation}
The claim follows by splitting the sum over $n$ in~\eqref{eq:Momboundg(n)} into sums over dyadic intervals and applying~\eqref{eq:Shiucons} to each of them.
\end{proof}

\section{Parseval bound}
%{\tt I think that the $\max_{T > x / h_2}$ was missing in the Lemma below --- when $h_1 \leq h_2$, this term seems to never dominate (when T $< x/h_1$, it's dominated by first integral, otherwise by second; I imposed this condition (we've been assuming it in the proof anyway!) and added an explanation to the end of the proof. I also removed the discussion of large t (truncation of Perron) as unncesessary} 
The following lemma shows that the behavior of a multiplicative function in almost all very short intervals can be approximated by its behavior on a long interval if the mean square of the corresponding Dirichlet polynomial is small. This is in the spirit of previous work on primes in almost all intervals, see for instance~\cite[Lemma 9.3]{Harman06}.
\begin{lemma}
\label{lem:shortintsumtoDirpols}
Let $|a_m| \leq 1$. Assume $1 \leq h_1 \leq h_2 = X / (\log X)^{1/5}$. Consider, for $X \leq x \leq 2 X$, 
$$
S_j(x) = \sum_{\substack{x \leq m \leq x+h_j}} a_m \quad \text{and write} \quad A(s) := \sum_{\substack{X \leq m \leq 4X}} \frac{a_m}{m^s}.
$$
Then
\[
\begin{split}
&\frac{1}{X} \int_{X}^{2X} \left|\frac{1}{h_1} S_1(x) - \frac{1}{h_2}S_2(x)\right|^2 dx \\
&\ll \frac{1}{(\log X)^{2/15}} + \int_{1+i(\log X)^{1/15}}^{1+iX/h_1} \left|A(s)\right|^2 |ds| + \max_{T \geq X/h_1} \frac{X/h_1}{T} \int_{1+iT}^{1+i2T} \left|A(s)\right|^2 |ds|.
% & + \max_{X/h_2 \leq t \leq X^{1 + \varepsilon}} \frac{X/h_2}{T} \int_{1 + iT}^{1 + 2iT}
%\left | A(s) \right|^2 |ds|
\end{split}
\]
\end{lemma}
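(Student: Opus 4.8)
The plan is to express the short-interval average as a contour integral of the Dirichlet polynomial $A(s)$ and then apply Parseval's identity for Dirichlet polynomials (or rather Plancherel for the Fourier/Mellin transform). First I would introduce the normalized counting function and reduce to a mean square. Writing $a_m=0$ for $m\notin[X,4X]$, for $X\le x\le 2X$ we have $S_j(x)=\sum_{x\le m\le x+h_j}a_m$, and the difference $\tfrac1{h_1}S_1(x)-\tfrac1{h_2}S_2(x)$ can be written as $\int a_m \bigl(\tfrac1{h_1}\mathbf 1_{[x,x+h_1]}(m)-\tfrac1{h_2}\mathbf 1_{[x,x+h_2]}(m)\bigr)$, i.e. as a convolution of the coefficient sequence against an explicit kernel. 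The standard device is to replace $x$ by $e^u$ and $m$ by $e^v$ so that the kernel becomes (essentially) a function of $u-v$; then Plancherel on $\mathbb{R}$ turns the $L^2$-norm in $x$ into an integral of $|A(1+it)|^2$ against the square of the Fourier transform of the kernel.

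The key steps, in order: (i) By Lemma \ref{le:Lipschitz}-type considerations or a direct truncation, replace $\tfrac1{h_2}S_2(x)$ for $h_2=X/(\log X)^{1/5}$ by the long average $\tfrac1X\sum_{X\le m\le 2X}a_m$ up to an error that is absorbed into $(\log X)^{-2/15}$ after squaring and integrating; more precisely one writes $\tfrac1{h_2}S_2(x)=\tfrac1{h_2}\int_0^{h_2}(\text{something})$ and uses that $h_2$ is within a logarithmic factor of $X$. Actually the cleaner route is to keep both terms and use the identity
\[
\frac{1}{h_1}S_1(x)-\frac{1}{h_2}S_2(x)=\int_{\mathbb{R}} a_{\lfloor e^{v}\rfloor}\,e^{v}\,K(u-v)\,dv+(\text{negligible}),\qquad x=e^{u},
\]
where $K$ is supported in $[0,\log(1+h_2/X)]$ with $\widehat K(t)\ll \min(1,|t|^{-1})$ and $\widehat K(t)\ll h_1/X$ for small $|t|$. (ii) Apply Plancherel in $u$ over the range $X\le e^u\le 2X$, which after extending the range of integration harmlessly gives
\[
\frac1X\int_X^{2X}\Bigl|\frac{1}{h_1}S_1(x)-\frac1{h_2}S_2(x)\Bigr|^2dx\ll \int_{\mathbb{R}}|A(1+it)|^2\,|\widehat K(t)|^2\,dt.
\]
(iii) Split the $t$-integral into $|t|\le (\log X)^{1/15}$, $(\log X)^{1/15}\le |t|\le X/h_1$, and $|t|\ge X/h_1$. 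On the first range use $|\widehat K(t)|\ll h_1/X \cdot |t|$ together with the trivial bound $|A(1+it)|\ll \log X$, whence the contribution is $\ll (h_1/X)^2 (\log X)^{2/15}\cdot(\log X)^2 \cdot \text{(length)}\ll (\log X)^{-2/15}$ after also using $h_1\le h_2$; this is where the $(\log X)^{-2/15}$ term comes from. On the middle range $|\widehat K(t)|\ll 1$, giving exactly $\int_{1+i(\log X)^{1/15}}^{1+iX/h_1}|A(s)|^2|ds|$. On the tail $|t|\ge X/h_1$ use $|\widehat K(t)|\ll |t|^{-1}$ and a dyadic decomposition $T\sim 2^k X/h_1$, giving $\sum_k (h_1/X)^2 2^{-2k}\int_{T}^{2T}|A(1+it)|^2 dt \ll \max_{T\ge X/h_1}\frac{X/h_1}{T}\int_{1+iT}^{1+i2T}|A(s)|^2|ds|$ after summing the geometric series (the worst dyadic block dominates up to a constant).

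I expect the main obstacle to be step (i): making rigorous the passage from the discrete sums $S_j(x)$ to the clean convolution-against-a-kernel form, controlling the edge effects from $\lfloor e^v\rfloor$ versus $e^v$ and from the fact that $x$ ranges over a dyadic block rather than all of $\mathbb{R}$. The standard fix is to smooth the indicator of the block $[X,2X]$ at scale $X/\log X$ (harmless since it only perturbs the average by $O(1/\log X)$) and to note that $\sum_{x\le m\le x+h_1}a_m$ differs from $\int_x^{x+h_1}a_{\lfloor v\rfloor}\,dv$ by at most $O(1)$ coefficients at the endpoints, contributing $O(1/h_1)$ pointwise which after squaring and integrating is $O(1/h_1)\ll (\log X)^{-2/15}$ once $h_1$ is not too small — and for $h_1$ bounded one argues separately, or simply notes the bound is trivial then. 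Everything else is a routine Plancherel computation plus the dyadic splitting above; no deep input is needed beyond the mean-value theorem (Lemma \ref{le:contMVT}) if one prefers to phrase the Plancherel step discretely.
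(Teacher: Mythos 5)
Your overall strategy---pass to logarithmic coordinates, express the difference of normalized short sums as a convolution against a kernel, and apply Plancherel---is a legitimate alternative to what the paper does. The paper instead applies Perron's formula directly to $S_j(x)$, uses the Saffari--Vaughan averaging trick (averaging over $w\in[h_j,3h_j]$) to decouple the $(x+h_j)^s$ factor, inserts a smooth cutoff $g(x/X)$, expands the square, and integrates in $x$ first. Both methods are standard ways of turning $L^2$ of short-interval sums into $\int|A(1+it)|^2$ against a kernel, and they land on the same decomposition of the $t$-line. The Perron route avoids your $\lfloor e^v\rfloor$-versus-$e^v$ discretization headache entirely, which is worth noting.

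However, there is a genuine error in your step (iii) on the range $|t|\le(\log X)^{1/15}$. First, the kernel bound should involve $h_2/X$, not $h_1/X$: writing $K=K_1-K_2$ with $\widehat K_j(0)=1$ and $\widehat K_j(t)=1+O((h_j/X)|t|)$, the difference satisfies $|\widehat K(t)|\ll(h_2/X)|t|$ since $h_2\ge h_1$. Second, pairing this with the \emph{trivial} bound $|A(1+it)|\ll\log X$ does not close: with $T_0=(\log X)^{1/15}$ the contribution is
\[
(h_2/X)^2\,T_0^2\cdot(\log X)^2\cdot T_0=(\log X)^{-2/5+2/15+2+1/15}=(\log X)^{9/5},
\]
which is far too large. (Your calculation with $(h_1/X)^2$ masks this, but only because that factor was wrong; taking $h_1=h_2$ makes your own expression blow up as well.) The fix is to \emph{not} bound $|A(1+it)|$ pointwise, but to invoke the mean value theorem (Lemma~\ref{le:contMVT}) for $\int_{|t|\le T_0}|A(1+it)|^2\,dt\ll(T_0+X)\cdot X^{-1}\ll1$; then the contribution is $\ll(h_2/X)^2T_0^2\ll(\log X)^{-4/15}$, which does absorb into $(\log X)^{-2/15}$. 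This is exactly parallel to how the paper treats its $U_j$ range: the leading $h_j/x$ factor cancels exactly in the difference $\frac1{h_1}U_1-\frac1{h_2}U_2$, leaving only an error of size $T_0^2 h_2/X$, and no pointwise control of $A$ is ever used on that range.

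A second, smaller gap: you wave away the endpoint discrepancy between the discrete sum $S_1(x)$ and the continuous integral by saying it contributes $O(1/h_1^2)$, and that for $h_1$ bounded ``the bound is trivial.'' But for $h_1$ between (say) $2$ and $(\log X)^{1/15}$, the term $1/h_1^2$ is \emph{not} absorbed into $(\log X)^{-2/15}$; it can only be absorbed into $\max_{T\ge X/h_1}\frac{X/h_1}{T}\int_{1+iT}^{1+i2T}|A(s)|^2|ds|$, for which one must observe (via the mean value theorem with $T\asymp X$) that this term is $\gg\frac{1}{h_1X}\sum_{X\le m\le 4X}|a_m|^2$, while the discrepancy is $\ll\frac{1}{h_1^2X}\sum|a_m|^2$. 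That does match, but it needs to be said; ``argue separately'' is not an argument. Again, the Perron approach sidesteps this because Perron's formula handles integer endpoints exactly.

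Your tail estimate for $|t|\ge X/h_1$ is essentially right; note only that the correct kernel decay is $|\widehat K(t)|\ll X/(h_1|t|)$, not $|t|^{-1}$ (the $X/h_1$ factor is what makes the dyadic sum you write actually produce $\frac{X/h_1}{T}$ in front of $\int_T^{2T}$). Your displayed dyadic calculation already uses the correct quantities, so this is just a misstatement of the intermediate bound.
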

\begin{proof}
% Let $T_1 = X^{1+\varepsilon}$. 
By Perron's formula
\[
\begin{split}
S_j(x) & 
 =
\frac{1}{2\pi i} \int_{1 - i\infty}^{1 + i\infty}  A(s) \frac{(x+h_j)^s-x^s}{s} ds. %+ O \Big ( \sum_{\ell \leq x} \frac{(\log X)^{2}}{1 + T_1 \log ( 1 + \ell / x )} \Big )
\end{split}
\]
% and the error term is 
% \[
% \ll \frac{X}{T_1} (\log X)^{3} \ll_{\varepsilon} X^{-\varepsilon/2}.
% \]
Let us split the integral in $S_j(x)$ into two parts $U_j(x)$ and $V_j(x)$ according to whether $|t| \leq T_0 := (\log X)^{1/15}$ or not. In $U_j(x)$ we write 
\[
\frac{(x+h_j)^s-x^s}{s} = x^s\frac{\left(1+\frac{h_j}{x}\right)^s-1}{s} = x^s\left(\frac{h_j}{x} + O\left(T_0\left(\frac{h_j}{X}\right)^2\right)\right),
\]
and get
\[
U_j(x) = \frac{h_j}{x} \cdot \frac{1}{2 \pi i} \int_{1-iT_0}^{1+iT_0} A(s) x^{s} ds + O\left(T_0^2 \cdot x \left(\frac{h_j}{X}\right)^2 \right),
\]
so that
\[
\frac{1}{h_1}U_1(x) - \frac{1}{h_2} U_2(x) \ll T_0^2 x \frac{h_2}{X^2}  \ll \frac{1}{(\log X)^{1/15}}.
\]

Hence it is enough to consider, for $j = 1, 2$,
\[
\begin{split}
&\frac{1}{X}\int_{X}^{2X} \left(\frac{|V_j(x)|}{h_j}\right)^2 dx \ll \frac{1}{h_j^2 X} \int_{X}^{2X} \left|\int_{1 + iT_0}^{1 + i\infty}  A(s) \frac{(x+h_j)^s-x^s}{s}ds\right|^2 dx.
\end{split}
\]
We would like to add a smoothing, take out a factor $x^s$, expand the square, exchange the order of integration and integrate over $x$. However, the term $(x+h_j)^s$ prevents us from doing this and we overcome this problem in a similar way to \cite[Page 25]{SaVa77}. We write
\[
\begin{split}
&\frac{(x+h_j)^s - x^s}{s} = \frac{1}{2h_j} \left(\int_{h_j}^{3h_j} \frac{(x+w)^s - x^s}{s} dw - \int_{h_j}^{3h_j} \frac{(x+w)^s - (x+h_j)^s}{s} dw\right) \\
&= \frac{x}{2h_j} \int_{h_j/x}^{3h_j/x} x^s\frac{(1+u)^s - 1}{s} du - \frac{x+h_j}{2h_j} \int_{0}^{2h_j/(x+h_j)} (x+h_j)^s \frac{(1+u)^s - 1}{s} du.
\end{split}
\]
where we have substituted $w = x \cdot u$ in the first integral and $w = h_j+ (x+h_j)u$ in the second integral. Let us only study the first summand, the second one being handled completely similarly. Thus we assume that
\[
\begin{split}
\frac{1}{X}\int_X^{2X}\left(\frac{|V_j(x)|}{h_j}\right)^2 dx &\ll \frac{X}{h_j^4} \int_X^{2X}\left|\int_{h_j/x}^{3h_j/x} \int_{1+iT_0}^{1+i\infty} A(s) x^s\frac{(1+u)^s - 1}{s} ds du \right|^2 dx \\
&\ll \frac{1}{h_j^3} \int_{h_j/(2X)}^{3h_j/X} \int_{X}^{2X}\left|\int_{1+iT_0}^{1+i\infty} A(s) x^s\frac{(1+u)^s - 1}{s} ds \right|^2dx du \\
&\ll \frac{1}{h_j^2X} \int_{X}^{2X}\left|\int_{1+iT_0}^{1+i\infty} A(s) x^s\frac{(1+u)^s - 1}{s} ds \right|^2dx
\end{split}
\]
for some $u \ll h_j/X$.

Let us introduce a smooth function $g(x)$ supported on $[1/2, 4]$ and
equal to $1$ on $[1,2]$. We obtain
%
%$[X/2, 4X]$ and $1$ on $[X, 2X]$. We obtain
\begin{align*}
&\frac{1}{X}\int_X^{2X}\left(\frac{|V_j(x)|}{h_j}\right)^2 dx \ll \frac{1}{h_j^2X} \int g\Big ( \frac{x}{X} \Big ) \left|\int_{1+iT_0}^{1+i\infty} A(s) x^s\frac{(1+u)^s - 1}{s} ds \right|^2dx \\
&\leq \frac{1}{h_j^2 X} \int_{1+iT_0}^{1+i\infty} \int_{1+iT_0}^{1+i\infty} \left|A(s_1) A(s_2) \frac{(1+u)^{s_1}-1}{s_1} \frac{(1+u)^{s_2}-1}{s_2}\right| \left| \int g \Big ( \frac{x}{X} \Big ) x^{s_1+\overline{s_2}} dx \right| |ds_1 ds_2| \\
&\ll  \frac{1}{h_j^2X}\int_{1+iT_0}^{1+i\infty} \int_{1+iT_0}^{1+i\infty} |A(s_1) A(s_2)| \min\left\{\frac{h_j}{X}, \frac{1}{|t_1|}\right\} \min\left\{\frac{h_j}{X}, \frac{1}{|t_2|}\right\} \frac{X^3}{|t_1-t_2|^2+1} |ds_1 ds_2| \\
&\ll  \frac{X^2}{h_j^2} \int_{1+iT_0}^{1+i\infty} \int_{1+iT_0}^{1+i\infty} \frac{|A(s_1)|^2 \min\{(h_j/X)^2, |t_1|^{-2}\}+ |A(s_2)|^2\min\{(h_j/X)^2, |t_2|^{-2}\}}{|t_1-t_2|^2+1} |ds_1 ds_2| \\
&\ll \int_{1+iT_0}^{1+iX/h_j} |A(s)|^2 |ds| + \frac{X^2}{h_j^2} \int_{1+iX/h_j}^{1+i\infty} \frac{|A(s)|^2}{|t|^2} |ds|.
\end{align*}
The second summand is
\begin{equation}
\label{eq:intsplit}
\ll \frac{X^2}{h_j^2} \int_{1+iX/(2h_j)}^{1+i\infty} \frac{1}{T^3} \int_{1+iT}^{1+i2T} |A(s)|^2 |ds| dT \ll \frac{X^2}{h_j^2} \cdot \frac{1}{X/h_j} \max_{T \geq X/(2h_j)} \frac{1}{T} \int_{1+iT}^{1+i2T} |A(s)|^2 |ds| 
\end{equation}
so that
\[
\begin{split}
&\frac{1}{X}\int_{X}^{2X} \left(\frac{|V_j(x)|}{h_j}\right)^2 dx \ll \int_{1+iT_0}^{1+iX/h_j} |A(s)|^2 |ds| + \frac{X}{h_j}\max_{T \geq X/h_j} \frac{1}{T} \int_{1+iT}^{1+i2T} |A(s)|^2 |ds|.
\end{split}
\]
Since $h_2 \geq h_1$ the expression on the right hand side with $j=2$ is always smaller than the same expression with $j=1$, and the claim follows.
\end{proof}

\section{The main proposition} 
\label{sec:mainProp}
By Lemma~\ref{lem:shortintsumtoDirpols}, Theorem~\ref{th:ThminS} will essentially follow from the following proposition. 
\begin{proposition}
\label{prop:MainProp}
Let $f : \mathbb{N} \rightarrow [-1,1]$ be a multiplicative
function. Let $\mathcal{S}$ be a set of integers as defined in Section \ref{se:KeyIdeas}. % with $[P_1, Q_1] \subset [1,h]$. 
Let
\[
F(s) = \sum_{\substack{X \leq n \leq 2X \\ n \in \mathcal{S}}} \frac{f(n)}{n^s}.
\]
Then, for any $T$,
\[
\int_{(\log X)^{1/15}}^T \left|F(1+it)\right|^2 dt \ll \left(\frac{T}{X/Q_1} + 1\right) \left(\frac{(\log Q_1)^{1/3}}{P_1^{1/6-\eta}} + \frac{1}{(\log X)^{1/50}}\right).
\]
\end{proposition}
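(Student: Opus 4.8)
The plan is to split the range of integration $[(\log X)^{1/15}, T]$ into the sets $\mathcal{T}_1, \dots, \mathcal{T}_J$ and $\mathcal{U}$ alluded to in Section~\ref{se:KeyIdeas}, defined according to the sizes of the Dirichlet polynomials over primes in $[P_j, Q_j]$, and to bound the contribution of each piece separately. Concretely, declare $t \in \mathcal{T}_j$ if $j$ is the least index for which every sub-sum $\sum_{P \le p \le Q} f(p) p^{-1-it}$ with $[P,Q] \subset [P_j, Q_j]$ (taken over a fixed narrow cover of $[P_j,Q_j]$) is $\le V_j$ in absolute value, for a suitable threshold $V_j$ chosen to produce a power saving in $P_1$; put $t \in \mathcal{U}$ if no such $j$ exists. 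Since $t \in \mathcal{U}$ forces \emph{some} short prime polynomial over $[P_j,Q_j]$ to be large for \emph{every} $j \le J$, a moment computation (a product of the bounds from Lemma~\ref{le:Rupest} applied across the independent ranges $[P_j,Q_j]$, using the spacing hypotheses~\eqref{eq:PjQjnottoofar} and~\eqref{eq:PjQjnottooclose}) shows $|\mathcal{U} \cap [-T,T]|$ is extremely small, of size $T^{1/2-\varepsilon}$ or smaller, so $\mathcal{U}$ behaves like a thin well-spaced set.

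For $t \in \mathcal{T}_j$ I would apply the Buchstab/Ramar\'e decomposition of Lemma~\ref{lem:decomp} with $[P,Q] = [P_j, Q_j]$, writing $F(1+it)$ (up to the harmless error terms in that lemma, which are controlled by $\frac{T+X}{X}(\frac1H + \frac1{P_1} + \sum_{(n,\mathcal{P})=1}|a_n|^2/n)$ — the last sum being small because integers in $\mathcal{S}$ always have a factor in $[P_j,Q_j]$) as $\ll H\log(Q_j/P_j) \sum_{\nu} \int_{\mathcal{T}_j} |Q_{\nu,H}(1+it) R_{\nu,H}(1+it)|^2\, dt$. On $\mathcal{T}_j$ the prime polynomial $Q_{\nu,H}$ is pointwise small by definition. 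For $j=1$ this is already enough: bound $Q_{\nu,H}$ pointwise and apply the mean value theorem (Lemma~\ref{le:contMVT}) to $R_{\nu,H}$, extending the integral to $|t|\le T$; the length of $R_{\nu,H}$ is about $X/Q_1$, which matches, giving the claimed $(\log Q_1)^{1/3} P_1^{-1/6+\eta}$ once the threshold $V_1$ is optimized. For $j > 1$ the polynomial $R_{\nu,H}$ is too short, so I would instead use the defining property of $\mathcal{T}_j$ — that $t \notin \mathcal{T}_{j-1}$, hence some narrow $\sum_{P\le p\le Q} f(p)p^{-1-it}$ with $[P,Q]\subset[P_{j-1},Q_{j-1}]$ exceeds $V_{j-1}$ — to multiply $R_{\nu,H}$ by $(V_{j-1}^{-1}\sum_{P\le p\le Q}f(p)p^{-1-it})^\ell$ for the $\ell$ of Lemma~\ref{le:moment}, lengthening the polynomial to roughly $X/Q_1$ without losing more than a factorial factor, and then apply Lemma~\ref{le:moment}. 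Here the spacing conditions~\eqref{eq:PjQjnottoofar}--\eqref{eq:PjQjnottooclose} are exactly what guarantee that a single $\ell$ simultaneously makes the polynomial long enough and keeps $(\ell+1)!^2$ under control; summing the resulting geometric-type series over $j \le J$ (with the $\eta/4j^2$, $\eta/j^2$ decay built into the conditions) keeps the total loss bounded.

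For $t \in \mathcal{U}$, I would split $F(1+it)$ according to whether $n$ has a prime factor in the range $[\exp((\log X)^{1-1/48}), \exp(\log X/\log\log X)]$ or not; the co-prime part has coefficients on a sparse set and is negligible by Lemma~\ref{le:contMVT}, while to the other part I apply Lemma~\ref{lem:decomp} once more and reduce, using that $\mathcal{U}$ can be replaced by $O(T^{1/2-\varepsilon})$ well-spaced points, to bounding $(\log X)^{2+\varepsilon}\sum_{t\in\mathcal{T}}|P(1+it)M(1+it)|^2$ over dyadic prime polynomials $P$ and companion integer polynomials $M$. By Lemma~\ref{le:Hal} and Lemma~\ref{le:distest} one has $|M(1+it)| \ll (\log X)^{-\delta}$ pointwise (since $f$ is real and $|t|\ge(\log X)^{1/15}$ is bounded away from $0$ and small $t_0$); I pull this factor out and am left to bound $\sum_{t\in\mathcal{T}}|P(1+it)|^2$, splitting $\mathcal{T}$ by the size of $|P|$ and using Lemma~\ref{le:Hallargevalint} on the set where $P$ is small and the \emph{Hal\'asz inequality for primes} (Lemma~\ref{le:Hallargevalprimes}), together with the rarity of large values of $P$ coming from a moment bound (Lemma~\ref{le:Rupest}), on the remaining very sparse set; the gain of one logarithm from the prime support in Lemma~\ref{le:Hallargevalprimes} is precisely what recovers the $(\log X)^{2+\varepsilon}$ loss, leaving the net saving $(\log X)^{-\delta+\varepsilon}$, which is absorbed into $(\log X)^{-1/50}$. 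Finally I would sum the contributions of $\mathcal{T}_1, \dots, \mathcal{T}_J$ and $\mathcal{U}$, using the inclusion–exclusion device of Lemma~\ref{le:Sinclexcl} wherever the restriction $n \in \mathcal{S}$ needs to be removed to legitimately apply mean value and Hal\'asz estimates, and observing that the sum over $j$ converges because of the $j^{-2}$-type decay in the defining conditions for $[P_j, Q_j]$. The main obstacle I anticipate is the $j>1$ case: keeping precise track of the interplay between the threshold $V_{j-1}$, the exponent $\ell$ from Lemma~\ref{le:moment}, and the resulting $(\ell+1)!^2$ loss, so that after summing over all $j \le J$ the cumulative error is still bounded by $(\log Q_1)^{1/3} P_1^{-1/6+\eta} + (\log X)^{-1/50}$ rather than growing with $J$ — which is why, as the authors caution, one must in practice chop every Dirichlet polynomial into very short segments to prevent error terms from accumulating.
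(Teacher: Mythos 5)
Your overall plan matches the paper's argument closely: the disjoint decomposition $[T_0, T] = \bigcup_j \mathcal{T}_j \cup \mathcal{U}$ defined by pointwise smallness of all narrow prime sub-sums in $[P_j,Q_j]$, the Ramar\'e-type decomposition (Lemma~\ref{lem:decomp}) on each $\mathcal{T}_j$, the direct mean-value estimate for $j=1$, and for $j>1$ the amplification by a high power of a large prime polynomial from $[P_{j-1},Q_{j-1}]$ followed by the moment bound (Lemma~\ref{le:moment}) and the geometric decay coming from~\eqref{eq:PjQjnottoofar}--\eqref{eq:PjQjnottooclose}. Two remarks on minor points: the paper bounds $|\mathcal{U}|$ by a \emph{single} application of Lemma~\ref{le:Rupest} to the prime polynomial over $[P_J, Q_J]$ (using only $2\alpha_J + \eta < 1/2$), not a product of such bounds across all ranges; your version isn't wrong but is more complicated than needed. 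Also, for $j>1$ the paper splits $\mathcal{T}_j = \bigcup_r \mathcal{T}_{j,r}$ according to \emph{which} narrow sub-interval of $[P_{j-1},Q_{j-1}]$ gives the large prime polynomial — this is the ``chop into short segments'' device you correctly anticipate as necessary.

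There is, however, a genuine gap in your treatment of $\int_{\mathcal{U}} |F|^2\,dt$. You propose to pull out the pointwise Hal\'asz bound $|M(1+it)| \ll (\log X)^{-\delta}$ on \emph{all} of $\mathcal{T}$, reducing to $\sum_{t\in\mathcal{T}} |P(1+it)|^2$, and then to split $\mathcal{T}$ by the size of $|P|$, applying Lemma~\ref{le:Hallargevalint} on the small-$|P|$ set. This cannot work. Once you have discarded $M$, you are left with a prime Dirichlet polynomial $P$ of length only $e^{v/H}=X^{o(1)}$ (since $v/H\le \log X/\log\log X$), whereas $|\mathcal{T}|\sqrt{T}$ can be as large as $T^{1-\eta}X^{o(1)}$, i.e.\ up to $X^{1-\eta+o(1)}$. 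Lemma~\ref{le:Hallargevalint} applied to $P$ alone then yields something of size a positive power of $X$, and even the trivial pointwise bound on $|P|$ gives $\ll (\log X)^{-2K}|\mathcal{T}|\ll (\log X)^{-2K}T^{1/2-\eta}X^{o(1)}$, which is not $O\bigl((T/X+1)(\log X)^{-1/50}\bigr)$ for $T$ near $X$: a power of $X$ cannot be absorbed by any finite power of $\log X$. The correct move — and what the paper actually does — is to split \emph{before} pulling anything out: on $\mathcal{T}_S$ (where $|P|\le(\log X)^{-100}$) one bounds the \emph{prime} polynomial $P$ pointwise and applies Lemma~\ref{le:Hallargevalint} to the \emph{long} polynomial $M$, whose length $X e^{-v/H}\ge X^{1-o(1)}$ dominates $|\mathcal{T}|\sqrt{T}$, so that the mean-square term produces only $\log(2T)$ and the net bound is $(\log X)^{-199}$. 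Only on $\mathcal{T}_L$, where $|\mathcal{T}_L|$ has been shown tiny via Lemma~\ref{le:Rupest}, does one apply the pointwise Hal\'asz bound to $M$ and sum $|P|^2$ via Lemma~\ref{le:Hallargevalprimes}. So the ingredients in your sketch are the right ones, but the order in which you deploy the pointwise bound on $M$ versus the large-value estimates is inverted on $\mathcal{T}_S$, and that step as written does not close.
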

\begin{remark}
The ``trivial bound'' for $\int_{0}^{T} |F(1 + it)|^2 dt$, obtained by applying a standard mean-value theorem (Lemma~\ref{le:contMVT}), is $T / X + 1$. 
\end{remark}
\begin{proof}
% Write $T_0 = (\log X)^{1/15}$. We shall only prove the case $T = X/h$, i.e.
% \[
% \int_{T_0}^{X/h} |F(1+it)|^2 dt \ll \frac{(\log h)^{1/3}}{P_1^{1/6-\eta}} + \frac{1}{(\log X)^{1/50}}.
% \]
% the larger values of $T$ being handled completely similarly as our bounds will always depend on $T$ at most linearly. 
Since the mean value theorem gives the bound $O(\frac{T}{X}+1)$, we can assume $T \leq X$.

Pick a sequence $\alpha_j$ for $1 \leq j \leq J$ with 
\begin{equation}
\label{eq:alphajdef}
\alpha_j = \frac{1}{4} - \eta\left(1+ \frac{1}{2j}\right),
\end{equation}
where $\eta \in (0, 1/6)$ is such that~\eqref{eq:PjQjnottoofar} and~\eqref{eq:PjQjnottooclose} hold. Notice that 
\[
\frac{1}{4}-\frac{3}{2}\eta = \alpha_1 \leq \alpha_2 \leq \dotsc \leq \alpha_J \leq \frac{1}{4}-\eta.
\]
We now split into several cases. Let
\[
Q_{v, H_j}(s) := \sum_{\substack{P_j \leq q \leq Q_j \\ e^{v/H_j} \leq q \leq e^{(v+1)/H_j}}} \frac{f(q)}{q^s}, \quad \text{where} \quad H_j := j^2 \frac{P_1^{1/6-\eta}}{(\log Q_1)^{1/3}}. 
\]
Notice that this can be non-zero only when 
\[
v \in \mathcal{I}_j := \{v: \lfloor H_j \log P_j \rfloor \leq v \leq H_j \log Q_j\}
\]
We write
\[
[T_0, T] = \bigcup_{j = 1}^J \mathcal{T}_j \cup \mathcal{U} \ , \ T_0 = (\log X)^{1/15}
\]
as a disjoint union where $t \in \mathcal{T}_j$ when $j$ is the smallest index such that
\begin{equation}
\label{eq:T_jdefcond}
\text{for all } v \in \mathcal{I}_j: |Q_{v,H_j}(1+it)| \leq e^{-\alpha_j v / H_j}
\end{equation}
and $t \in \mathcal{U}$ if this does not hold for any $j$.

Let us first consider the integrals over the sets $\mathcal{T}_j$. Let
$$
R_{v,H_j}(s) = \sum_{\substack{X e^{-v/H_j} \leq m \leq 2 X e^{-v/H_j} \\ m \in \mathcal{S}_j}} \frac{f(m)}{m^s} \cdot \frac{1}{\#\{P_j \leq p \leq Q_j:
p|m \} +1}
$$
where $\mathcal{S}_j$ is the set of those integers
which have at least one prime factor in every
interval $[P_i,Q_i]$ with $i \neq j$ and $i \leq J$ (and possibly but not necessarily some prime factors in $[P_j, Q_j]$).
Using Lemma \ref{lem:decomp} with $H = H_j$, $P = P_j, Q = Q_j$ and $a_m = f(m) \mathbf{1}_{\mathcal{S}}$, $c_p = f(p)$, $b_m = f(m) \mathbf{1}_{\mathcal{S}_j}$
(where $\mathbf{1}_{A}$ is the indicator function of the set $A$), 
we see that 
\begin{align*}
\int_{\mathcal{T}_j}  | F(1 + it)|^2 dt \ll H_j \cdot \log Q_j \sum_{v \in \mathcal{I}_j} \int_{\mathcal{T}_j}
|Q_{v,H_j}(1 + it)R_{v,H_j}(1 + it)|^2 dt + \frac{1}{H_j} + \frac{1}{P_j}.
\end{align*}
Here the second and third terms contribute in total to integrals over all $\mathcal{T}_j$
\[
\ll \sum_{j = 1}^J \left(\frac{1}{H_j} + \frac{1}{P_j}\right) \ll \frac{(\log Q_1)^{1/3}}{P_1^{1/6-\eta}}.
\]
since $P_j \geq P_1^{j^2}$ by~\eqref{eq:PjQjnottooclose}. 
We can thus concentrate, for $1 \leq j \leq J$, on bounding
\begin{align} \label{vdef}
E_j := H_j \log Q_j \cdot \sum_{v \in \mathcal{I}_j} \int_{\mathcal{T}_j} |Q_{v, H_j}(1+it) R_{v, H_j}(1+it)|^2 dt.
\end{align}
By the definition of the set $\mathcal{T}_j$ we have $|Q_{v, H_j}(1 + it)| \leq e^{-\alpha_j v / H_j}$ for $t \in \mathcal{T}_j$. Therefore, for $1 \leq j \leq J$,
\begin{equation}
\label{eq:intboundPjout}
E_j \ll H_j \log Q_j
 \cdot \sum_{v \in \mathcal{I}_j} e^{-2\alpha_j v / H_j} \int_{\mathcal{T}_j} |R_{v, H_j}(1+it)|^2 dt.
\end{equation}
Recalling that $[T_0, T] = \mathcal{T}_1 \cup \mathcal{T}_2 \cup \ldots \cup \mathcal{T}_J \cup \mathcal{U}$ 
(with $T_0 = (\log X)^{1/15}$) we see that
\begin{equation} \label{eq:boundplan}
\int_{T_0}^{T} |F(1 + it)|^2 dt \ll E_1 + E_2 + \ldots + E_J + \int_{\mathcal{U}} |F(1 + it)|^2 dt
\end{equation}
We will now proceed as follows: In section 8.1 we bound $E_1$, in section 8.2 we bound $E_i$ with $2 \leq i \leq J$, and
finally in section 8.3 we obtain a bound for $\int_{\mathcal{U}} |F(1 + it)|^2 dt$. 
\subsection{Bounding $E_1$}
If $j = 1$, then by the mean-value theorem (Lemma~\ref{le:contMVT}), we get
\begin{align*}
E_1 &\ll H_1 \log Q_1 \cdot \sum_{v \in \mathcal{I}_1} e^{-2\alpha_1 v/H_1} \cdot \Big ( T + \frac{X}{e^{v/H_1}} \Big ) \frac{1}{X/e^{v/H_1}} \\
&\ll H_1 \log Q_1 \cdot P_1^{-2\alpha_1} \frac{1}{1-e^{-2\alpha_1/H_1}} \cdot \Big ( \frac{T}{X/Q_1} + 1 \Big ) \\
&\ll H_1^2 \log Q_1 \cdot P_1^{-1/2+3\eta} \Big ( \frac{T}{X/Q_1} + 1 \Big ) \ll \Big ( \frac{T}{X/Q_1} + 1 \Big ) \frac{(\log Q_1)^{1/3}}{P_1^{1/6-\eta}}
\end{align*}
by the choice of $H_1$.
\subsection{Bounding $E_j$ with $2 \leq j \leq J$}
Now suppose that $2 \leq j \leq J$.  In this case we split further
\[
\mathcal{T}_j = \bigcup_{r \in \mathcal{I}_{j-1}} \mathcal{T}_{j, r},
\]
where 
$$
\mathcal{T}_{j, r} = \{t \in \mathcal{T}_j \colon |Q_{r,H_{j-1}}(1 + it)| > e^{-\alpha_{j-1} r/ H_{j-1}} \}
$$
Note that this is indeed a splitting, since, by the definition of $\mathcal{T}_j$, for any $t \in \mathcal{T}_j$ there will be an index $r \in \mathcal{I}_{j-1}$ such that $|Q_{r, H_{j-1}}(1 + it)| > e^{-\alpha_{j-1} r / H_{j-1}}$. Therefore, for some $v = v(j) \in \mathcal{I}_j$ and $r = r(j) \in \mathcal{I}_{j-1}$,
\begin{equation}
\label{eq:intboundrsplit}
E_j \ll H_j \log Q_j \cdot \# \mathcal{I}_j \cdot \# \mathcal{I}_{j-1} \cdot e^{-2 \alpha_j v / H_j}
\times \int_{\mathcal{T}_{j,r}} |R_{v, H_j}(1 + it)|^2 dt
\end{equation}
On $\mathcal{T}_{j,r}$ we have $|Q_{r,H_{j-1}}(1 + it)| > e^{-\alpha_{j-1} r/H_{j-1}}$. 
Therefore, for any $\ell_{j,r} \geq 1$, multiplying by the term $(|Q_{r,H_{j-1}}(1 + it)| e^{\alpha_{j-1} r/H_{j-1}})^{2\ell_{j,r}} \geq 1$, we can bound this further as
\begin{align*}
E_{j} \ll &(H_j \log Q_j)^3 \cdot e^{-2\alpha_j v/ H_j} \times \\ & \times  \exp \Big (2 \ell_{j,r} \cdot \alpha_{j-1} r  / H_{j-1} \Big )
\int_{\mathcal{T}_{j,r}} |Q_{r, H_{j-1}}(1 + it)^{\ell_{j,r}} R_{v,H_j}(1 + it)|^2 dt.
\end{align*}
Choosing
\[
\ell_{j,r} = \left\lceil \frac{v/ H_{j}}{r/H_{j-1}}\right \rceil \leq \frac{H_{j-1}}{r} \cdot \frac{v}{H_j} + 1,
\]
we get
\begin{align*}
E_j &\ll H_j^3 (\log Q_j)^3 \cdot \exp \Big ( 2v (\alpha_{j-1} - \alpha_{j}) / H_{j} + 2 \alpha_{j-1} r / H_{j-1} \Big ) \\
& \quad \cdot\int_{-T}^{T} |Q_{r,H_{j-1}}(1 + it)^{\ell_{j,r}} R_{v,H_j}(1 + it)|^2 dt.
\end{align*}
Now we are in the position to use Lemma~\ref{le:moment} which gives
\begin{align*}
\int_{-T}^{T}  |Q_{r,H_{j-1}}(1 + it)^{\ell_{j,r}}  R_{v,H_j}(1 + it)|^2  dt 
&\ll \left ( \frac{T}{X} + 2^{\ell_{j, r}}e^{r/H_{j-1}} \right ) \cdot (\ell_{j,r} + 1)!^2\\
&\ll \left( \frac{T}{X} + Q_{j-1} \right) \exp \left (2 \ell_{j,r} \log \ell_{j,r} \right ) 
\end{align*}
Here by the mean value theorem and the definition of $\ell_{j,r}$
\[
\begin{split}
\ell_{j,r} \log \ell_{j,r} &\leq \frac{v/ H_{j}}{r/H_{j-1}} \log \frac{v/ H_{j}}{r/H_{j-1}} + \log \log Q_j + 1 \\
&\leq \frac{v}{H_j} \cdot \frac{\log \log Q_j}{\log P_{j-1}-1} + \log \log Q_j + 1,
\end{split}
\]
so that
\begin{align*}
&\int_{-T}^{T}  |Q_{r,H_{j-1}}(1 + it)^{\ell_{j,r}}  R_{v,H_j}(1 + it)|^2  dt \\
& \ll \left( \frac{T}{X} + 1 \right) Q_{j-1} (\log Q_j)^2 \exp \left (\frac{v}{H_j} \cdot \frac{2\log \log Q_j}{\log P_{j-1}-1} \right ) \\
& \ll \left( \frac{T}{X} + 1 \right ) Q_{j-1} (\log Q_j)^2 \exp \left (\frac{\eta}{2j^2} \cdot \frac{v}{H_j} \right )
\end{align*}
by \eqref{eq:PjQjnottoofar}. Note that~\eqref{eq:PjQjnottoofar} also implies
\[
\log \log Q_j \leq \frac{1}{24}\log P_{j-1} \leq \log Q_{j-1}^{1/24} \implies \log Q_j \leq Q_{j-1}^{1/24},
\]
so that
\[
\begin{split}
H_j^3 (\log Q_j)^5 Q_{j-1} \exp(2\alpha_{j-1} r / H_{j-1})  &\ll H_j^3 (\log Q_j)^5 Q_{j-1}^2 \\
&\ll H_j^3 Q_{j-1}^{5/2} \ll j^6 P_1^{1/2} Q_{j-1}^{5/2} \ll j^6 Q_{j-1}^{3}.
\end{split}
\]
Therefore we end up with the bound
\begin{align*}
E_j & \ll \left( \frac{T}{X} + 1 \right ) j^6 Q_{j-1}^3 \exp \left ( \frac{2 v}{H_j} \left (\alpha_{j-1} - \alpha_{j} + \frac{\eta}{4j^2} \right ) \right) \\
& \ll \left( \frac{T}{X} + 1 \right ) j^6 Q_{j-1}^3 \exp \left ( - \frac{\eta}{2j^2} \log P_j \right ) \\
&\ll \left( \frac{T}{X} + 1 \right ) \frac{1}{j^2 Q_{j-1}} \ll \Big( \frac{T}{X} + 1 \Big ) \frac{1}{j^2 P_1}
\end{align*}
by~\eqref{eq:alphajdef} and~(\ref{eq:PjQjnottooclose}).

\subsection{Bounding $\int_{\mathcal{U}} |F(1 + it)|^2 dt$} Let us now bound the integral
$$
\int_{\mathcal{U}} |F(1 + it)|^2 dt.
$$
We again apply Lemma~\ref{lem:decomp}, this time with $a_m = b_m = f(m)\mathbf{1}_{\mathcal{S}}(m)$, $c_p = f(p)$ and $P = \exp((\log X)^{1-1/48}), Q = \exp(\log X/(\log \log X))$ and $H = (\log X)^{1/48}$ to see that, for some $v \in [\lfloor H \log P \rfloor, H \log Q]$, the integral is bounded by
$$
H^2(\log X)^{2} \int_{\mathcal{U}} |Q_{v,H}(1 + it)
R_{v,H}(1 + it)|^2 dt + \left(\frac{T}{X}+1\right)\left(\frac{1}{H}+\frac{1}{P}+ \frac{\log P}{\log Q}\right),
$$
where
$$
Q_{v,H}(s) = \sum_{e^{v/H} \leq p \leq e^{(v+1)/H}} \frac{f(p)}{p^{s}}
$$
and 
\[
R_{v,H}(s) = \sum_{\substack{X e^{-v/H} \leq m \leq 2X e^{-v/H} \\ m \in \mathcal{S}}} \frac{f(m)}{m^s} \cdot \frac{1}{\# \{p \in [P, Q] \colon p \mid m\} + 1}.
\]
We then find a well-spaced set $\mathcal{T} \subseteq \mathcal{U}$ such that
$$
\int_{\mathcal{U}} |Q_{v,H}(1 + it)R_{v,H}(1 + it)|^2 dt
\leq 2\sum_{t \in \mathcal{T}} |Q_{v,H}(1 + it)|^2 \cdot |R_{v,H}(1 + it)|^2.
$$
By definition of $J$ and~\eqref{eq:PjQjnottoofar}, we know that $Q_J \leq \exp((\log X)^{1/2})$ and
\[
\log P_J \geq \frac{4j^2}{\eta} \cdot \log \log Q_{J+1} \geq \frac{4j^2}{\eta} \cdot \log (\log X)^{1/2} \implies P_J \geq (\log X)^{2/\eta}.
\]
Now, by definition of $\mathcal{U}$, for each $t \in \mathcal{T}$ there is $v \in \mathcal{I}_J$ such that $|Q_{v, H_J}(s)| > e^{-\alpha_J v /H_J}$. Applying Lemma~\ref{le:Rupest} to $Q_{v, H_J}(s)$ for every $v \in \mathcal{I}_J$ we get
\[
|\mathcal{T}| \ll |\mathcal{I}_J| \cdot T^{2 \alpha_J+o(1)} \cdot T^\eta \cdot X^{o(1)} \ll T^{1/2-\eta} \cdot X^{o(1)}.
\]

Let
\[
\mathcal{T}_{L} = \{t \in \mathcal{T} :  |Q_{v, H}(1+it)| \geq (\log X)^{-100}\}
\]
and
\[
\mathcal{T}_{S} = \{t \in \mathcal{T} :  |Q_{v,H}(1+it)| < (\log X)^{-100}\}.
\]

By Lemma~\ref{le:Hallargevalint},
\begin{align*}
&\sum_{t \in \mathcal{T}_S} |Q_{v,H}(1 + it) R_{v,H}(1 + it)|^2 dt \ll (\log X)^{-200} \cdot \sum_{t \in \mathcal{T}} |R_{v,H}(1 + it)|^2 \\
& \ll (\log X)^{-200} \cdot \Big ( X e^{-v/H} + |\mathcal{T}| T^{1/2} \Big ) \log(2T) \frac{1}{X e^{-v/H}} \ll (\log X)^{-199},
\end{align*}
and thus we can concentrate on the integral over $\mathcal{T}_L$.

By Lemma~\ref{le:Rupest}, we have 
\[
\begin{split}
|\mathcal{T}_{L}| &\ll \exp\left(2\frac{\log (\log X)^{100}}{v/H} \log T + 2 \log (\log X)^{100} + 2\frac{\log T}{v/H} \log \log T\right) \\
&\ll \exp\left(\frac{(\log X)^{1+o(1)}}{\log P}\right) \ll \exp((\log X)^{1/48+o(1)}), 
\end{split}
\]
and by Lemmas~\ref{le:Halappl} and~\ref{le:Sinclexcl} (since $2^J \ll (\log X)^{o(1)}$),
$$
\max_{(\log X)^{1/15} \leq |u| \leq 2T^{1+\varepsilon}} |R_{v, H} (1 + iu)| \ll (\log X)^{-1/16+o(1)} \cdot \frac{\log Q}{\log P}
$$
Thus by Lemma~\ref{le:Hallargevalprimes}, and the Hal\'asz bound above,
\[
\begin{split}
&\sum_{t \in \mathcal{T}_{L}}  |R_{v,H}(1  + it )|^2 \cdot |Q_{v,H}(1 + it )|^2 \\
&\ll (\log X)^{-1/8+o(1)} \left(\frac{\log Q}{\log P}\right)^2 \Big (e^{v/H} + |\mathcal{T}_L| \cdot e^{v/H} \cdot \exp(-(\log X)^{1/5})
\Big ) \cdot \sum_{\substack{e^{v/H} \leq r \leq e^{(v+1)/H} \\ r \in \mathbb{P}}} \frac{1}{r^2 \log r} \\ 
&\ll (\log X)^{-1/8+o(1)} \left(\frac{\log Q}{\log P}\right)^2 \frac{H}{v} \sum_{\substack{e^{v/H} \leq r \leq e^{(v+1)/H} \\ r \in \mathbb{P}}} \frac{1}{r} \ll (\log X)^{-1/8+o(1)} \frac{(\log Q)^2}{(\log P)^4} \frac{1}{H},
\end{split}
\]
where the additional gain comes from the sum over $r \in \mathbb{P}$ saving us an additional $1/v \ll 1/(H \log P)$ (since we are looking at primes in a short interval). 
Combining the above estimates, we get the bound
\[
\begin{split}
\int_{t \in \mathcal{U}} |F(1 + it)|^2 dt &\ll H (\log X)^2 (\log X)^{-1/8+o(1)} \frac{(\log Q)^2}{(\log P)^4} + \left(\frac{T}{X}+1\right) \left(\frac{1}{H}+\frac{\log P}{\log Q}\right) \\
&\ll \left(\frac{T}{X}+1\right) (\log X)^{-1/48+o(1)}.
\end{split}
\]
\subsection{Conclusion} Collecting all the bounds and refering to (\ref{eq:boundplan}) we get
\[
\begin{split}
&\int_{T_0}^{T} |F(1 + it)|^2 dt \\
&\ll \left( \frac{T}{X/Q_1} + 1 \right ) \frac{(\log Q_1)^{1/3}}{P_1^{1/6-\eta}} 
+  \left( \frac{T}{X} + 1 \right ) \left(\sum_{2 \leq j \leq J-1} \frac{1}{j^2 P_1} + \frac{1}{(\log X)^{1/48+o(1)}} \right)\\
&\ll  \left( \frac{T}{X/Q_1} + 1 \right ) \left(\frac{(\log Q_1)^{1/3}}{P_1^{1/6-\eta}} + \frac{1}{(\log X)^{1/50}}\right)
\end{split}
\]
which is the desired bound.
\end{proof}

\section{Proofs of Theorems~\ref{thm:main} and~\ref{th:ThminS}}
\label{sec:MTreduction}
\begin{proof}[Proof of Theorem~\ref{th:ThminS}]
Combining Lemma~\ref{lem:shortintsumtoDirpols} with Proposition~\ref{prop:MainProp} it follows that
\[
\frac{1}{X}\int_X^{2X} \left|\frac{1}{h} \sum_{\substack{x \leq n \leq x + h \\ n \in \mathcal{S}}} f(n) - \frac{1}{h_2} \sum_{\substack{x \leq n \leq x+h_2 \\
n \in \mathcal{S}}} f(n)\right|^2 dx \ll \frac{(\log h)^{1/3}}{P_1^{1/6-\eta}} + \frac{1}{(\log X)^{1/50}},
\]
when $Q_1 \leq h \leq h_2 = \frac{X}{(\log X)^{1/5}}$.
Using Lemma~\ref{le:Lipschitz} together with Lemma~\ref{le:Sinclexcl} we have, for any $X \leq x \leq 2X$, 
\begin{equation}
\label{eq:Lipappl}
\frac{1}{h_2} \sum_{\substack{x \leq n \leq x + h_2 \\ n \in \mathcal{S}}} f(n) = \frac{1}{X} \sum_{\substack{X \leq n \leq 2X \\ n \in \mathcal{S}}} f(n) + O((\log X)^{-1/20+o(1)}), 
\end{equation}
and the claim follows in case $h \leq h_2$. In case $h > h_2$, the claim follows immediately from~\eqref{eq:Lipappl}.
\end{proof}

\begin{proof}[Proof of Theorem~\ref{thm:main}]
Let us start by separating the contribution of $n \not \in \mathcal{S}$, where $\mathcal{S}$ is a set satisfying the conditions in Theorem~\ref{th:ThminS}. We get
\[
\begin{split}
&\Bigg | \frac{1}{h} \sum_{x \leq n \leq x + h} f(n) - \frac{1}{X} \sum_{X \leq n \leq 2X} f(n) \Bigg | \\
&\leq \left| \frac{1}{h} \sum_{\substack{x \leq n \leq x + h \\ n \in \mathcal{S}}} f(n) - \frac{1}{X} \sum_{\substack{X \leq n \leq 2X \\ n \in \mathcal{S}}} f(n) \right| + \frac{1}{h} \sum_{\substack{x \leq n \leq x + h \\ n \not \in \mathcal{S}}} 1 + \frac{1}{X} \sum_{\substack{X \leq n \leq 2X \\ n \not \in \mathcal{S}}} 1.
\end{split}
\]
Let us write
\[
\begin{split}
&\frac{1}{h} \sum_{\substack{x \leq n \leq x + h \\ n \not \in \mathcal{S}}} 1 = 1+O(1/h)-\frac{1}{h} \sum_{\substack{x \leq n \leq x + h \\ n \in \mathcal{S}}} 1 \\
&= \frac{1}{X} \sum_{\substack{X \leq n \leq 2X \\ n \not \in \mathcal{S}}} 1 + \frac{1}{X} \sum_{\substack{X \leq n \leq 2X \\ n \in \mathcal{S}}} 1 +O(1/h)-\frac{1}{h} \sum_{\substack{x \leq n \leq x + h \\ n \in \mathcal{S}}} 1,
\end{split}
\]
so that
\[
\begin{split}
&\Bigg | \frac{1}{h} \sum_{x \leq n \leq x + h} f(n) - \frac{1}{X} \sum_{X \leq n \leq 2X} f(n) \Bigg | \\
&\leq \left| \frac{1}{h} \sum_{\substack{x \leq n \leq x + h \\ n \in \mathcal{S}}} f(n) - \frac{1}{X} \sum_{\substack{X \leq n \leq 2X \\ n \in \mathcal{S}}} f(n) \right| + \left| \frac{1}{h} \sum_{\substack{x \leq n \leq x + h \\ n \in \mathcal{S}}} 1 - \frac{1}{X} \sum_{\substack{X \leq n \leq 2X \\ n \in \mathcal{S}}} 1 \right|
 + \frac{2}{X} \sum_{\substack{X \leq n \leq 2X \\ n \not \in \mathcal{S}}} 1 + O(1/h).
\end{split}
\]

Theorem \ref{th:ThminS} applied to $f(n)$ and to $1$ implies that the first and second terms are both at most $\delta/100$ with 
at most 
\begin{equation}
\label{eq:excthminS}
\ll \frac{X (\log h)^{1/3}}{P_1^{1/6 - \eta}\delta^{2}}+\frac{X}{(\log X)^{1/50}\delta^{2}}
\end{equation}
exceptions. 

By the fundamental lemma of the sieve, for all large enough $X$, 
\begin{align*}
\sum_{\substack{X \leq n \leq 2X \\ n \not \in \mathcal{S}}} 1 \leq
\left(1+\frac{1}{100}\right) X \sum_{j \leq J} \prod_{P_j \leq p \leq Q_j} \Big ( 1 - \frac{1}{p} \Big ) \leq \left(1+\frac{1}{100}\right) X \sum_{j \leq J} \frac{\log P_j}{\log Q_j} 
\end{align*}
Hence we get that
\begin{equation}
\label{eq:difnoSbound}
\Bigg | \frac{1}{h} \sum_{x \leq n \leq x + h} f(n) - \frac{1}{X} \sum_{X \leq n \leq 2X} f(n) \Bigg | \leq \delta/50 + \left(2 + \frac{1}{50}\right) \sum_{j} \frac{\log P_j}{\log Q_j}
\end{equation}
with at most \eqref{eq:excthminS} exceptions. 

To deduce Theorem~\ref{thm:main} we pick an appropriate sequence of intervals $[P_j, Q_j]$. In case $h \leq \exp((\log X)^{1/2})$, we choose $\eta = 1/150$, $Q_1 = h, P_1 = \max\{h^{\delta/4}, (\log h)^{40/\eta}\}$ and $P_j$ and $Q_j$ as in~\eqref{eq:PjQjchoice}.  With this choice the expression in \eqref{eq:difnoSbound} is at most $\delta + 20000\frac{\log \log h}{\log h}$ and the number of exceptions is as claimed.
%{\tt There is actually an issue here since we need that $P_1 \geq (\log Q_1)^{40/\eta}$, i.e. when $\delta$ is very small we are in trouble. This could be recovered in three ways in the statement of Theorem 1: 1) impose a lower bound for $\delta$, 2) replace the upper bound $\delta$ in the claim by $\delta + C \log \log h / \log h$ (with explicit $C$) 3) replace $h^{\delta/25}$ in the exceptional set by $h^{\delta/C}$ (with explicit but rather large $C$). A good side of option 2, though it complicates the statement a bit, is that it makes the maximal amount of saving we can get clear, and I think I prefer this option.}

In case $h > \exp((\log X)^{1/2})$, we choose $\eta = 1/150, Q_1 = \exp((\log X)^{1/2}), P_1 = Q_1^{\delta/4}$ and $P_j$ and $Q_j$ as in~\eqref{eq:PjQjchoice}. This is a valid choice since we can assume $\delta \geq (\log X)^{-1/100}$, so that $P_1 \geq (\log Q_1)^{40/\eta}$.  With this choice the expression in \eqref{eq:difnoSbound} is at most $\delta$ and the number of exceptions is as claimed.

\end{proof}

\section{Proof of Theorems~\ref{th:longints} and~\ref{thm:bilinear}}
\label{sec:longintproof}
Let $\eta_{\xi, v} (x)$ be a smoothing of the indicator function of $[1-v, 1 + v]$
which decays on the segments $[1 - \xi - v, 1 - v]$ and $[1 + v, 1 + \xi + v]$. Precisely,
let
$$
\eta_{\xi, v} (x) = \begin{cases}
1 & \text{ if } 1 - v \leq x \leq 1 + v \\
 (1 + v + \xi - x) / \xi & \text{ if } 1 + v \leq x \leq 1 + \xi + v \\
 (x + v + \xi - 1) / \xi & \text{ if } 1 - \xi - v \leq x \leq 1 - v \\
0 & \text{ otherwise}. 
\end{cases}
$$
We find that
\begin{align*}
\widehat{\eta}_{\xi, v}(s) & := - \int_{0}^{\infty} t^s d \eta_{\xi, v}(t) = - \int_{1-v-\xi}^{1-v} \frac{t^s}{\xi}dt + \int_{1+v}^{1+v+\xi} \frac{t^s}{\xi} dt \\ & = \frac{(1 + \xi + v)^{s + 1} - (1 + v)^{s + 1}}{\xi (s + 1)}
- \frac{(1 - v)^{s + 1} - ( 1 - \xi - v)^{s + 1}}{\xi (s + 1)}.
\end{align*}
Therefore by Mellin inversion,
\begin{equation}
\label{eq:Meleta}
\eta_{\xi, v}(x) = \frac{1}{2\pi i} \int_{1 - i\infty}^{1 + i\infty} \frac{x^{-s}}{s} \cdot \widehat{\eta}_{\xi, v}(s) ds. 
\end{equation}
We are now ready to prove Theorem \ref{th:longints}.  
\begin{proof}[Proof of Theorem \ref{th:longints}]
Let $h_1 = h\sqrt{x}$ and $h_2 = x (\log x)^{-1/5}$. Let $v_j = h_j / x$ and $\xi_j = \delta h_j / x$ for some small $\delta$ to be chosen later. 
Let also
$\eta_j(x) := \eta_{\xi_j, v_j}(x)$  for $j = 1, 2$. 
Consider, 
\begin{align*}
S_j = \sum_{\substack{\sqrt{x} \leq n_1 \leq 2\sqrt{x} \\ n_1, n_2 \in \mathcal{S}}} f(n_1) f(n_2) \eta_{j} \Big ( \frac{n_1 n_2}{x} \Big ). 
\end{align*}
Using \eqref{eq:Meleta}, we see that $S_j$ equals
\begin{align*}
\frac{1}{2\pi i} \int_{1 - i \infty}^{1 + i \infty} M_1(s) M_2(s) x^s \cdot \frac{(1 + \xi_j + v_j)^{s + 1} - (1 + v_j)^{s + 1} - (1 - v_j)^{s + 1} + (1 - \xi_j - v_j)^{s + 1}}{\xi_j \cdot s (s + 1)} ds
\end{align*}
where
$$
M_1(s) := \sum_{\substack{\sqrt{x} \leq n \leq 2\sqrt{x} \\ n \in \mathcal{S}}} \frac{f(n)}{n^s} \quad \text{and} \quad M_2(s) := \sum_{\substack{\sqrt{x}/2 \leq n \leq 2\sqrt{x} \\ n \in \mathcal{S}}} \frac{f(n)}{n^s}
$$
As in the proof of Lemma~\ref{lem:shortintsumtoDirpols} we split the integral in $S_j$ into two parts $U_j$ and $V_j$ according to whether $|t| \leq T_0 := (\log x)^{1/12}$ or not. In $U_j$, we expand each term in the following way, 
$(1 + w)^{1 + s} = 1 + w (1 + s) + \frac{w^2}{2} s ( 1 + s) + O(|w|^3 |s| |s+1| |s - 1|)$ (for $|w| \leq 1/2$ and $\Re s = 1$). This gives, 
\begin{align*}
& x^s \cdot \frac{(1 + \xi_j + v_j)^{s + 1} - (1 + v_j)^{s + 1} - (1 - v_j)^{s + 1} + (1 - \xi_j - v_j)^{s+1}}{\xi_j s (s + 1)} 
\\ & = (\xi_j + 2 v_j) x^s  + O(x (1 + |s|) (\xi_j^3 + v_j^3) / \xi_j) = (2 + \delta) \cdot \frac{h_j}{x} \cdot x^s 
 + O(x \cdot T_0 (h_j/ x)^2 / \delta).  
\end{align*}
so that
$$
\Big | \frac{1}{h_1} U_1 -  \frac{1}{h_2} U_2 \Big | \ll \frac{T_0^2}{\delta} \cdot  \frac{h_2}{x} \ll
\frac{(\log x)^{1/6 - 1/5}}{\delta} \ll \frac{(\log x)^{-1/30}}{\delta}.
$$ 
On the other hand, to bound $V_j$, we notice that (on $\Re s = 1$), 
$$
\frac{|\widehat{\eta}_j(s)|}{|s|} = \Big | \int_{0}^{\infty} t^{s-1} \eta(t) dt \Big | \ll \frac{h_j}{x}  \text{ and } \frac{|\widehat{\eta}_j(s)|}{|s|} \ll \frac{1}{|s| \xi |s+1|} \ll
\frac{x}{\delta h_j} \cdot \frac{1}{1 + |s|^2}.
$$
Therefore splitting the integral $V_j$ at height $x /  h_j$, we get
\begin{align*}
\Big | \frac{1}{h_1} V_1 - \frac{1}{h_2} V_2 \Big | & \leq \frac{1}{\delta} \sum_{j=1}^2 \Big ( 
\int_{1 + iT_0}^{1 + i x / h_j} |M_1(s)M_2(s)| |ds| + \frac{x}{h_j} \max_{T > x/h_j} \frac{1}{T} \int_{1 + iT}^{1 + 2 iT} |M_1(s)M_2(s)| |ds|
\Big ).
\end{align*}
similarly to~\eqref{eq:intsplit}. Using Cauchy-Schwarz inequality and Proposition \ref{prop:MainProp} we thus get the following bound (recall
that $h_1 = h \sqrt{x}, h_2 = x / (\log x)^{1/5}$ and $h \geq Q_1$ by assumptions):
$$
\Big | \frac{1}{h_1}V_1 - \frac{1}{h_2} V_2 \Big | \ll \frac{(\log Q_1)^{1/3}}{\delta P_1^{1/6 - \eta}} + \frac{1}{\delta (\log X)^{1/50}}.  
$$
We now choose $\delta = \max((\log Q_1)^{1/6}/ P_1^{1/12-\eta/2}, (\log X)^{-1/100})$ and notice that
$$
\frac{1}{h_j} \sum_{\substack{\sqrt{x} \leq n_1 \leq 2\sqrt{x} \\ x + h_j \leq n_1 n_2 \leq x + \delta h_j}} 1 \ll \delta.
$$
Therefore
\begin{equation}
\label{eq:bilincomp}
\begin{split}
\frac{1}{h_1} \sum_{\substack{\sqrt{x} \leq n_1 \leq 2\sqrt{x} \\ x - h_1 \leq n_1 n_2 \leq x + h_1 \\ n_1, n_2 \in \mathcal{S}}}
f(n_1) f(n_2) = \frac{1}{h_2} & \sum_{\substack{\sqrt{x} \leq n_1 \leq 2\sqrt{x} \\ x - h_2 \leq n_1 n_2 \leq x + h_2 \\ n_1, n_2 \in \mathcal{S}}} f(n_1)f(n_2)
+ \\
& + O \Big ( \frac{(\log Q_1)^{1/6}}{P_1^{1/12 - \eta/2}} + \frac{1}{(\log X)^{1/100}} \Big ). 
\end{split}
\end{equation}
Finally,
\begin{align*}
\sum_{\substack{x - h_2 \leq n_1 n_2 \leq x +  h_2 \\ \sqrt{x} \leq n_1 \leq 2\sqrt{x} \\ n_1, n_2 \in \mathcal{S}}}
f(n_1) f(n_2) & = \sum_{\substack{\sqrt{x} \leq n_1 \leq 2\sqrt{x} \\ n_1 \in \mathcal{S}}} f(n_1) 
\sum_{\substack{(x - h_2) / n_1 \leq n_2 \leq (x + h_2) / n_1 \\ n_2 \in \mathcal{S}}} f(n_2). 
\end{align*}
and $[(x - h_2) / n_1, (x + h_2) / n_1]$ is an interval of length $\asymp \sqrt{x} / (\log x)^{1/5}$ around $\asymp \sqrt{x}$.
Using Lemma \ref{le:Lipschitz} and Lemma \ref{le:Sinclexcl}, we get 
$$
\frac{1}{h_2/n_1} \sum_{\substack{(x - h_2) / n_1 \leq n_2 \leq (x + h_2) / n_1 \\ n_2 \in \mathcal{S}}} f(n_2) = \frac{2}{\sqrt{x}}
\sum_{\substack{\sqrt{x} \leq n \leq 2\sqrt{x} \\ n \in \mathcal{S}}} f(n) + O ( (\log x)^{-1/20 + o(1)} ),
$$ 
so that
\[
\begin{split}
\frac{1}{h_2} \sum_{\substack{\sqrt{x} \leq n_1 \leq 2\sqrt{x} \\ x - h_2 \leq n_1 n_2 \leq x + h_2 \\ n_1, n_2 \in \mathcal{S}}} f(n_1)f(n_2) &= \frac{2}{\sqrt{x}}
\sum_{\substack{\sqrt{x} \leq n \leq 2\sqrt{x} \\ n \in \mathcal{S}}} f(n) \sum_{\substack{\sqrt{x} \leq n_1 \leq 2\sqrt{x} \\ n_1 \in \mathcal{S}}} \frac{f(n_1)}{n_1} + O ( (\log x)^{-1/20 + o(1)} ) \\
&= 2\log 2 \cdot \Big(\frac{1}{\sqrt{x}}\sum_{\substack{\sqrt{x} \leq n \leq 2\sqrt{x} \\ n \in \mathcal{S}}} f(n)\Big)^2 + O ( (\log x)^{-1/20 + o(1)} )
\end{split}
\]
by partial summation and Lemmas \ref{le:Lipschitz} and \ref{le:Sinclexcl}. The claim follows by combining this with~\eqref{eq:bilincomp}.
% Hence
% $$
% \frac{1}{2 h \sqrt{x}} \sum_{\substack{n_1, n_2 \sim \sqrt{x} \\ n_1, n_2 \in \mathcal{S} \\ |x - n_1 n_2 | \leq h \sqrt{x}}}
% f(n_1) f(n_2) = \log 2 \cdot \Big (\frac{1}{\sqrt{x}} \sum_{\substack{n \sim \sqrt{x} \\ n \in \mathcal{S}}} f(n) \Big )^2 
% + O \Big ( \frac{(\log Q_1)^{1/6}}{P_1^{1/12 - \eta}} + (\log X)^{-1/100} \Big ). 
% $$
% as claimed. 
\end{proof}

\begin{proof}[Proof of Theorem \ref{thm:bilinear}]
%We take $f(n) = 1$ in Theorem \ref{th:longints} and
We can assume that $h \leq \exp((\log x)^{1/2})$ since the claim for longer intervals follows by splitting the sum on the left hand side into sums over intervals of length $\sqrt{x}\exp((\log x)^{1/2})$. 

We take $\eta = 1/12$, $Q_1 = h$, and $P_1 = (\log h)^{40/\eta} = (\log h)^{480}$ and for $j \geq 2$, the intervals $[P_j, Q_j]$ as in~\eqref{eq:PjQjchoice}. Arguing as in the proof of Theorem~\ref{thm:main}, and noticing that
\[
\left(\sum_{\sqrt{x} \leq n \leq 2\sqrt{x}}   1\right)^2   = \sum_{\sqrt{x} \leq n_1, n_2 \leq 2 \sqrt{x}}   1   = \left(\sum_{\sqrt{x} \leq n \leq 2\sqrt{x}, n \in \mathcal{S}} 1\right)^2  + \sum_{\substack{\sqrt{x} \leq n_1, n_2 \leq 2 \sqrt{x} \\ n_1 \not \in \mathcal{S} \text{ or } n_2 \not \in \mathcal{S}}} 1,
\]
we obtain
\[
\begin{split}
&\left|\frac{1}{h \sqrt{x} \log 2} \sum_{\substack{x \leq n_1 n_2  \leq x + h \sqrt{x} \\ \sqrt{x} \leq n_1 \leq 2\sqrt{x}}} f(n_1) f(n_2) - \Big ( \frac{1}{\sqrt{x}} \sum_{\sqrt{x} \leq n \leq 2\sqrt{x}} f(n) \Big )^2\right| \\
&\leq \left|\frac{1}{h \sqrt{x}\log 2} \sum_{\substack{x \leq n_1 n_2  \leq x + h \sqrt{x} \\ \sqrt{x} \leq n_1 \leq 2\sqrt{x} \\ n_1, n_2 \in \mathcal{S}}} f(n_1) f(n_2) - \Big ( \frac{1}{\sqrt{x}} \sum_{\substack{\sqrt{x} \leq n \leq 2\sqrt{x} \\ n \in \mathcal{S}}} f(n) \Big )^2\right| \\
&\quad + \left|\frac{1}{h \sqrt{x}\log 2} \sum_{\substack{x \leq n_1 n_2  \leq x + h \sqrt{x} \\ \sqrt{x} \leq n_1 \leq 2\sqrt{x} \\ n_1, n_2 \in \mathcal{S}}} 1 - \Big ( \frac{1}{\sqrt{x}} \sum_{\substack{\sqrt{x} \leq n \leq 2\sqrt{x} \\ n \in \mathcal{S}}} 1 \Big )^2\right| + \frac{2}{x} \sum_{\substack{\sqrt{x} \leq n_1, n_2 \leq 2\sqrt{x} \\ n_1 \not \in \mathcal{S} \text{ or } n_2 \not \in \mathcal{S}}} 1 +  O(1/h).
\end{split}
\]

Now we apply Theorem~\ref{th:longints} to the first two terms and use the fundamental lemma of the sieve to get that
$$
\frac{1}{\sqrt{x}}
\sum_{\substack{\sqrt{x} \leq n \leq 2\sqrt{x} \\ n \notin \mathcal{S}}} 1 \ll \sum_{j} \frac{\log P_j}{\log Q_j} \ll \frac{\log P_1}{\log Q_1} \ll \frac{\log \log h}{\log h}. 
$$
It follows that
\begin{align*}
\frac{1}{h\sqrt{x}\log 2} & \sum_{\substack{x \leq n_1 n_2 \leq x + h\sqrt{x} \\ \sqrt{x} \leq n_1 \leq 2\sqrt{x}}}
f(n_1) f(n_2) = \Big ( \frac{1}{\sqrt{x}} \sum_{\sqrt{x} \leq n \leq 2\sqrt{x}} f(n) \Big )^2 + 
 \\  & + O \Big (\frac{(\log h)^{1/6 + \varepsilon}}{P_1^{1/12 - \eta/2}} + \frac{\log \log h}{\log h} +
(\log x)^{-1/100} \Big ),
\end{align*}
and the claim follows recalling our choices of $\eta$ and $P_1$.
\end{proof}
\section{Proofs of the corollaries}
\subsection{Smooth numbers}
\begin{proof}[Proof of Corollary \ref{cor:smoothinshorts}]
Follows immediately from Theorem~\ref{thm:main} by taking $f$ to be the multiplicative function such that $f(p^\nu) = 1$ for $p \leq x^{1/u}$ and $f(p^\nu) = 0$ otherwise
\end{proof}

\begin{proof}[Proof of Corollary~\ref{cor:smooths}]
The qualitative statement in Corollary~\ref{cor:smooths} would follow immediately from Theorem~\ref{thm:bilinear} together with the Cauchy-Schwarz inequality through the same choice of $f$ as in the previous proof. However, to get a better value for $C(\varepsilon)$, we prove the result using Theeorem~\ref{th:longints} with an appropriate choice of $\mathcal{S}$.

Let $\delta$ be a small positive constant, $\eta \in (0, 1/6)$, and $h$ be fixed but large in terms of $\delta$ and $\eta$. Choose $P_1 = h^{1-\delta}, Q_1 = h$, and for $j \geq 2$ choose
\begin{equation}
\label{eq:PjQjchoiceappl}
P_j = \exp((j/\delta)^{4j} (\log h)^j) \quad \text{and} \quad
Q_j = \exp((j/\delta)^{4j + 2} (\log h)^j).
\end{equation}
This choice satisfies conditions~\eqref{eq:PjQjnottoofar} and~\eqref{eq:PjQjnottooclose}, provided that $h$ is fixed but large enough in terms of $\delta$ and $\eta$.

Notice that with the same choice of $f$ as above, Theorem~\ref{th:longints} implies
that
\begin{align*}
\frac{1}{h \sqrt{x}} \sum_{\substack{x \leq n_1 n_2 \leq x + h \sqrt{x} \\ \sqrt{x} \leq n_1 \leq 2\sqrt{x} \\
n_1, n_2 \in \mathcal{S} \\ n_1, n_2 \text{ }x^{\varepsilon} \text{-smooth}}} 1
\gg \Big ( \frac{1}{\sqrt{x}} \sum_{\substack{\sqrt{x} \leq n \leq 2\sqrt{x} \\ n \in \mathcal{S} \\ n \text{ } x^{\varepsilon}
\text{-smooth}}} 1 \Big )^2 + O \Big ( \frac{(\log Q_1)^{1/6}}{P_1^{1/12 - \eta}} + (\log x)^{-1/100} \Big ).
\end{align*}
The fundamental lemma of the sieve shows that for any $j \leq J$, we have
\[
\sum_{\substack{\sqrt{x} \leq n \leq 2\sqrt{x} \\ p \mid n \implies p \not \in [P_j, Q_j] \\ n \text{ $x^\varepsilon$-smooth}}} 1 \leq
(1+\delta^2)\rho(1/(2\varepsilon)) \sqrt{x} \cdot \frac{\log P_j}{\log Q_j}.
\]
provided that $x$ is large enough, so that 
\[
\begin{split}
\frac{1}{\sqrt{x}}\sum_{\substack{\sqrt{x} \leq n \leq 2\sqrt{x} \\ n \in \mathcal{S} \\ n \text{ } x^{\varepsilon}
\text{-smooth}}} 1 &\geq \frac{1}{\sqrt{x}} \sum_{\substack{\sqrt{x} \leq n \leq 2\sqrt{x} \\ n \text{ } x^{\varepsilon} 
\text{-smooth}}} 1 - \frac{1}{\sqrt{x}} \sum_{j = 1}^J \sum_{\substack{\sqrt{x} \leq n \leq 2\sqrt{x} \\ p \mid n \implies p \not \in [P_j, Q_j] \\ n \text{ $x^\varepsilon$-smooth}}} 1\\
&\geq \rho(1/(2\varepsilon))(1+o(1)) - \sum_{j=1}^J (1+\delta^2)\rho(1/(2\varepsilon)) \cdot \frac{\log P_j}{\log Q_j} \\
&\geq \rho(1/(2\varepsilon))\left(1+o(1)-(1+\delta^2)(1-\delta)-\sum_{j=2}^J \frac{\delta^2}{j^2}\right) \\
&\geq \delta/2 \cdot \rho(1/(2\varepsilon)).
\end{split}
\]
Hence
\begin{align*}
\frac{1}{h \sqrt{x}} \sum_{\substack{x \leq n_1 n_2 \leq x + h \sqrt{x} \\ \sqrt{x} \leq n_1 \leq 2\sqrt{x} \\
n_1, n_2 \in \mathcal{S} \\ n_1, n_2 \text{ }x^{\varepsilon} \text{-smooth}}} 1 \gg \delta^2
\rho(1/(2\varepsilon))^2 + O \Big ( h^{-(1 - \delta)/12 + 1/1000} + (\log x)^{-1/100} \Big ) . 
\end{align*}
Therefore for any small enough $\delta > 0$, and all $x$ large enough, the left-hand side is
$$
\gg \delta^2 \rho(1/\varepsilon)^{1.01} + O \Big  ( h^{-1/12 + 2 \delta + 1/1000} + (\log x)^{-1/100} \Big )
$$
It follows that the lower bound is positive if $h = \rho(1/\varepsilon)^{-13}$ and $\delta, \varepsilon$
are taken small enough. We conclude by using the Cauchy-Schwarz inequality, noting that
\[
\begin{split}
\sqrt{x} &\ll \left(\sum_{\substack{x \leq n \leq x+C\sqrt{x} \\ n \text{ $x^\varepsilon$-smooth}}} 1\right)^{1/2} \left(\sum_{x \leq n \leq x+C\sqrt{x}} \left(\sum_{n_1  n_2 = n} 1\right)^2\right)^{1/2}\\
&\ll\left(\sum_{\substack{x \leq n \leq x+C\sqrt{x} \\ n \text{ $x^\varepsilon$-smooth}}} 1\right)^{1/2} \left(\sqrt{x} (\log x)^4\right)^{1/2} 
\end{split}
\]
and the claim follows.
\end{proof}

\subsection{Signs of multiplicative functions}

\begin{proof}[Proof of Corollary \ref{cor:signchangsinints}]
First notice that the condition that  $f(n) \neq 0$ for a positive proportion of $n$ is equivalent to $\sum_{p, f(p) = 0}\frac{1}{p} < \infty$, and also that we can assume without loss of generality that $f(n) \in \{-1, 0 ,1 \}$. The qualitative statement in Corollary~\ref{cor:signchangsinints} would follow from Theorem~\ref{thm:main} using a slightly simpler variant of the argument below.  However, to get a better bound for the size of the exceptional set, we prove the result using Theeorem~\ref{th:ThminS} with an appropriate choice of $\mathcal{S}$.

Let us choose $P_j$ and $Q_j$ and thus $\mathcal{S}$ as in the proof of Corollary~\ref{cor:smooths} in previous subsection, with $\delta$ small but fixed. By \eqref{eq:Lipsch} together with Lemma~\ref{le:Sinclexcl},
\[
\frac{1}{X} \sum_{\substack{X \leq n \leq 2X \\ n \in \mathcal{S}}} g(n) = \frac{1}{X} \sum_{\substack{n \leq X \\ n \in \mathcal{S}}} g(n) + O((\log X)^{-1/20+o(1)}).
\]
for $g = f$ and $g=|f|$.
Let $p_0^\nu$ be the smallest prime power for which $f(p_0^\nu) = -1$. Now
\[
\sum_{\substack{n \leq X \\ n \in \mathcal{S}}} |f(n)| - f(n) \geq \sum_{\substack{n \leq X/p_0^\nu \\ n \in \mathcal{S}, p_0 \nmid n}} |f(n)|-f(n)+|f(p_0^\nu n)|-f(p_0^\nu n) = 2\sum_{\substack{n \leq X/p_0^\nu \\ n \in \mathcal{S}, p_0 \nmid n}} |f(n)| \gg X
\]
by the fundamental lemma of sieve, similarly to the proof of Corollary~\ref{cor:smooths}.

Applying Theorem~\ref{th:ThminS} to $f(n)$ and $|f(n)|$ we obtain that
\[
\sum_{\substack{x \leq n \leq x+h \\ n \in \mathcal{S}}} |f(n)| - f(n) \gg h
\]
for all but at most 
\begin{equation} \label{exceptionset}
\ll \frac{(\log h)^{1/3}}{h^{(1-\varepsilon)(1/6-\eta)}} + \frac{1}{(\log X)^{1/50}}
\end{equation}
integers $x \in [X, 2X]$.
Hence $f(n)$ is negative in almost all short intervals. Similarly we can show that
$$
\sum_{\substack{x \leq n \leq x + h \\ n \in \mathcal{S}}} |f(n)| + f(n) \gg h
$$
for all but at most (\ref{exceptionset}) exceptional integers $x \in [X, 2X]$. 
Hence $f(n)$ must be positive in almost all short intervals, and the claim follows. 
We actually get that the number of exceptions is $\ll X/h^{1/6-\varepsilon} + (\log X)^{-1/50}$ for any $\varepsilon > 0$.
\end{proof}
It is worth remarking that when $\sum_{f(p) < 0} 1/p < \infty$, one can work out directly the number of sign changes of $f$ up to $x$. For example for non-vanishing completely multiplicative $f$ such that $\sum_{f(p) < 0} 1/p < \infty$, the number of sign changes up to $x$ is asymptotically
$$
x \cdot \Big ( \frac{1}{2} - \frac{1}{2}\prod_{p\colon f(p) < 0} \Big ( 1 - \frac{4}{p+1} \Big ) \Big ) .
$$
Such formulas were pointed out to us by Andrew Granville and Greg Martin, and essentially the formula in general case as well as its proof can be found from a paper by Lucht and Tuttas~\cite{Lucht}.
% The earliest reference to results of this type that we could find
% is a paper by Lucht and Tuttas\cite{Lucht}. We also thank Greg Martin for pointing out
% the above formula to us. 
%Results of this type have been investigated by Lucht \cite{Lucht}.
\begin{proof}[Proof of Corollary \ref{cor:signchanges}]
Follows immediately from the proof of Corollary \ref{cor:signchangsinints}.
\end{proof}
\begin{proof}[Proof of Corollary~\ref{cor:chowla}]
By Corollary~\ref{cor:signchanges}, there is a positive proportion $\delta$ of integers $n$ such that $f(n)f(n+1) \leq 0$. Hence
\[
\sum_{n \leq x} f(n) f(n+1) \leq \sum_{\substack{n \leq x \\ f(n)f(n+1) > 0}} 1 \leq (1-\delta)x.
\]
On the other hand,  
\[
f(n) f(n+1) f(2n) f(2n+1)^2 f(2(n+1)) = (f(2) f(n) f(n+1) f(2n+1))^2  \geq 0,
\]
so that one of $f(n) f(n+1)$, $f(2n)f(2n+1)$ and $f(2n+1)f(2n+2)$ must be non-negative, so that
\[
\sum_{n \leq x} f(n) f(n+1) \geq \sum_{\substack{n \leq x \\ f(n)f(n+1) < 0}} (-1) \geq -(1-\delta)x.
\]
Hence
\begin{equation}
\label{eq:Chowlah=1}
\left|\sum_{n \leq x} f(n) f(n+1)\right| \leq (1-\delta)x.
\end{equation}
For $h \geq 2$,
\[
\begin{split}
\left|\sum_{n \leq x} f(n)f(n+h)\right| &\leq \left|\sum_{\substack{n \leq x \\ h \nmid n}} f(n)f(n+h)\right| + \left|\sum_{\substack{n \leq x \\ h \mid n}} f(n)f(n+h)\right| \\
&\leq\left(1-\frac{1}{h}\right)x + 1+ |f(h)|\left|\sum_{n \leq x/h} f(n)f(n+1)\right| \\
&\leq \left(1-\frac{1}{h}\right)x + 1+ (1-\delta)\frac{x}{h} < (1-\delta(h))x
\end{split}
\]
by \eqref{eq:Chowlah=1}.
\end{proof}

\begin{proof}[Proof of Corollary~\ref{cor:signchangesall}] 
Without loss of generality we can assume that $f(n) \in \{-1, 0, 1\}$. Theorem~\ref{thm:bilinear} implies that for any multiplicative function $g: \mathbb{N} \rightarrow [-1,1]$,
\begin{equation}
\label{eq:glongintwoS}
\frac{1}{h \sqrt{x} \log 2} \sum_{\substack{x \leq n_1 n_2 \leq x + h \sqrt{x} \\ \sqrt{x} \leq n_1 \leq 2\sqrt{x}}} g(n_1) g(n_2) 
= \Big ( \frac{1}{\sqrt{x}} \sum_{\sqrt{x} \leq n \leq 2\sqrt{x}} g(n) \Big )^2 + O ( (\log h)^{-1/100} ). 
\end{equation}
Let us study, for a given $f$,
\[
S^{\pm} = \frac{1}{h \sqrt{x}\log 2}\sum_{\substack{x \leq n_1 n_2 \leq x+h \sqrt{x} \\ \sqrt{x} \leq n_1 \leq 2\sqrt{x}}} (|f(n_1)f(n_2)| \pm f(n_1)f(n_2)).
\]
We will show that $S^+ > 0$ and $S^{-} > 0$. First of these implies that there is $n \in [x, x+h\sqrt{x}]$ such that $f(n) > 0$ (since $f$ is assumed to be completely multiplicative) whereas the second one implies that there is $n \in [x, x+h\sqrt{x}]$ such that $f(n) < 0$.

By~\eqref{eq:glongintwoS}
\[
S^{\pm} = \Big ( \frac{1}{\sqrt{x}} \sum_{\sqrt{x} \leq n \leq 2\sqrt{x}} |f(n)| \Big )^2 \pm \Big ( \frac{1}{\sqrt{x}} \sum_{\sqrt{x} \leq n \leq 2\sqrt{x}} f(n) \Big )^2 + O ( (\log h)^{-1/100} ).
\]
Here the first square is $\gg 1$ by assumption that $f$ is non-vanishing for positive proportion of $n$, so that immediately $S^+ \gg 1$. On the other hand
\[
S^- =  \left(\frac{1}{\sqrt{x}} \sum_{\sqrt{x} \leq n \leq 2\sqrt{x}} (|f(n)| + f(n))\right) \cdot \left(\frac{1}{\sqrt{x}} \sum_{\sqrt{x} \leq n \leq 2\sqrt{x}} (|f(n)| - f(n)) \right) + O ( (\log h)^{-1/100} ).
\]
Arguing as in beginning of proof of Corollary~\ref{cor:signchangsinints},
\[
\frac{1}{\sqrt{x}} \sum_{\sqrt{x} \leq n \leq 2\sqrt{x}} (|f(n)| \pm f(n)) \gg 1,
\]
so that also $S^{-} \gg 1$ and the claim follows.
\end{proof}
It is worth noticing that the case $\sum_{\substack{p \colon f(p) < 0}} \frac{1}{p} < \infty$ is easier than the general case --- actually
it follows from work of Kowalski, Robert and Wu~\cite{KRW07} on $\mathfrak{B}$-free numbers in short intervals that $f$ has a sign change in all intervals $[x, x+x^\theta]$ for any $\theta > 7/17$.

\section*{Acknowledgements}
The authors would like to thank Andrew Granville for many useful discussions on the topic. They would also like to thank the anonymous referee and Joni Ter\"av\"ainen for careful reading of the manuscript.
The first author was supported by the Academy of Finland grants no. 137883 and 138522.

\bibliographystyle{plain}
\bibliography{biblio}

\end{document}